\newcommand{\textcyr}[1]{%
 {\fontencoding{OT2}\fontfamily{wncyr}\fontseries{m}\fontshape{n}\selectfont #1}}
\newcommand{\Sha}{{\mbox{\textcyr{Sh}}}}
\newcommand{\sha}{{\mbox{\textcyr{sh}}}}
\newcommand{\ssha}{\mbox{\tiny\textcyr{sh}}}
\newtheorem{lemma}{Lemma}[section]
\newtheorem{theorem}[lemma]{Theorem}
\newtheorem{prop}[lemma]{Proposition}
\newtheorem{cor}[lemma]{Corollary}
\newtheorem{example}[lemma]{Example}
\theoremstyle{definition}
\newtheorem{remark}[lemma]{Remark}
\newcommand{\A}{{\mathbb A}}
\newcommand{\G}{{\mathbb G}}
\newcommand{\PP}{{\mathbb P}}
\newcommand{\F}{{\mathbb F}}
\newcommand{\Q}{{\mathbb Q}}
\newcommand{\R}{{\mathbb R}}
\newcommand{\Z}{{\mathbb Z}}
\newcommand{\Kbar}{{\overline{K}}}
\newcommand{\Lbar}{{\overline{L}}}
\newcommand{\Xbar}{{\overline{X}}}
\newcommand{\Cbar}{{\overline{C}}}
\newcommand{\kk}{{\mathbf k}}
\newcommand{\mm}{{\mathfrak m}}
\newcommand{\mathfrakL}{\mathfrak L}
\newcommand{\calA}{{\mathcal A}}
\newcommand{\calG}{{\mathcal G}}
\DeclareMathOperator{\inv}{inv}
\DeclareMathOperator{\eval}{eval}
\DeclareMathOperator{\im}{im}
\DeclareMathOperator{\Gal}{Gal}
\DeclareMathOperator{\Cor}{Cor}
\DeclareMathOperator{\Norm}{Norm}
\DeclareMathOperator{\Map}{Map}
\DeclareMathOperator{\Br}{Br}
\DeclareMathOperator{\Sel}{Sel}
\DeclareMathOperator{\divv}{div}
\DeclareMathOperator{\Div}{Div}
\DeclareMathOperator{\Alb}{Alb}
\DeclareMathOperator{\Pic}{Pic}
\DeclareMathOperator{\NS}{NS}
\DeclareMathOperator{\Jac}{Jac}
\DeclareMathOperator{\Spec}{Spec}
\DeclareMathOperator{\HH}{H}
\DeclareMathOperator{\N}{N}
\DeclareMathOperator{\res}{res}
\DeclareMathOperator{\Princ}{Princ}
\newcommand{\To}{\longrightarrow}
\numberwithin{equation}{section}
\numberwithin{table}{section}
\newcommand{\defi}[1]{\textsf{#1}} % for defined terms
\title{Two torsion in the Brauer group of a hyperelliptic curve}
\author{Brendan Creutz}
\address{Department of Mathematics and Statistics, University of Canterbury, Private Bag 4800, Christchurch 8140, New Zealand}
\email{brendan.creutz@canterbury.ac.nz}
\urladdr{http://www.math.canterbury.ac.nz/\~{}b.creutz}
\author{Bianca Viray}
\thanks{The second author was partially supported by NSF grant DMS-1002933.}
\address{Department of Mathematics, Box 1917, Brown University, Providence, RI 02912, USA}
\email{bviray@math.brown.edu}
\urladdr{http://math.brown.edu/\~{}bviray}
\date{}
\subjclass{14F22, 14G05}
\begin{document}
	%%%%%%%%%%%%%%%%%%%%%%%%%%%%%%%%%%%%%%%%%%%%%%%%%%%%%%%%%%%%%%%%%%%%%%%%%%%%
	\begin{abstract}
		We construct unramified central simple algebras representing $2$-torsion classes in the Brauer group of a hyperelliptic curve, and show that every $2$-torsion class can be constructed this way when the curve has a rational Weierstrass point or when the base field is $C_1$. In general, we show that a large (but in general proper) subgroup of the $2$-torsion classes are given by the construction. Examples demonstrating applications to the arithmetic of hyperelliptic curves defined over number fields are given.
		% plain text abstract
		% We construct unramified central simple algebras representing 2-torsion classes in the Brauer group of a hyperelliptic curve, and show that every 2-torsion class can be constructed this way when the curve has a rational Weierstrass point or when the base field is C_1. In general, we show that a large (but in general proper) subgroup of the 2-torsion classes are given by the construction. Examples demonstrating applications to the arithmetic of hyperelliptic curves defined over number fields are given.
	\end{abstract}
	%%%%%%%%%%%%%%%%%%%%%%%%%%%%%%%%%%%%%%%%%%%%%%%%%%%%%%%%%%%%%%%%%%%%%%%%%%%%
		
	\maketitle
	% \tableofcontents
	% \pagebreak

	%%%%%%%%%%%%%%%%%%%%%%%%%%%%%
	\section{Introduction}
	%%%%%%%%%%%%%%%%%%%%%%%%%%%%%

		Let $X$ be a smooth, projective and geometrically integral variety over a field $K$ of characteristic different from $2$. The \defi{Brauer group} of $X$, denoted $\Br X$, is a generalization of the usual notion of the Brauer group of a field. By the Purity Theorem~\cite{Fujiwara-purity}, $\Br X$ is the unramified subgroup of $\Br \kk(X)$, the Brauer group of the function field of $X$, and it is known~\cite{Merkurjev} that any $2$-torsion element of $\Br X$ can be written as a tensor product of quaternion algebras over $\kk(X)$. The goal of this paper is to make this explicit in the case that $X=C$ is a double cover of the projective line, henceforth denoted by $\pi : C \to \PP^1$. To our knowledge, this has only been achieved in special cases where $\pi$ is assumed to have $K$-rational branch points and the Jacobian of $C$ has rational $2$-torsion (see~\cite{Wittenberg-transcendental}*{Prop. 2.2}, \cite{Skorobogatov-torsors}*{p.91}, and~\cite{RTY}).
		
		We show how unramified central simple algebras over $\kk(C)$ can be constructed  from functions defined on the branch locus of $\pi$, and determine precisely when two such functions yield the same Brauer class. Furthermore, we show that every $2$-torsion Brauer class can be constructed this way when $\pi$ has a $K$-rational branch point or when $K$ is a $C_1$ field. The former case includes all elliptic curves. The latter applies when $C$ is the generic fiber of a double cover of a ruled surface over a separably closed field. This is used in \cite{CreutzViray} to obtain an equally explicit presentation of the $2$-torsion in the Brauer group of such a surface.
		
		Of course, much of the interest in Brauer groups of varieties arises from arithmetic applications in which the variety is not known to have rational points and the base field is emphatically not $C_1$. Specifically, if $K$ is a global field, then, as Manin~\cite{Manin-BMobs} observed, elements of the Brauer group can obstruct the existence of $K$-points, even when there is no local obstruction. Over global fields our construction yields a presentation of a subgroup of $(\Br C)[2]$ which can (and usually will) be proper, but remains large enough for interesting applications to the study of rational points.
		
		To illustrate, let us consider the example $C:y^2 = 2(x^4-17)$. It has been shown \cite{Lind,Reichardt} that $C$ is locally solvable, yet has no rational points. By work of Cassels~\cite{CasselsIV} and Manin ~\cite{Manin-BMobs} it is known that this failure of the local-global principle must be attributable to some element in $\Br C$. Our results allow us to write down such an element explicitly. Namely, the quaternion algebra $\calA := (-x^2-4,-2)_2$ represents an element of $\Br C$ which obstructs the existence of rational points on $C$. Indeed, a rational point $P = (x_0,y_0) \in C(\Q)$ would give rise to a quaternion algebra, $\eval_P(\calA) = (-x_0^2-4,-2)_2 \in \Br\Q$, which would necessarily be trivial over $\Q_p$ for every finite prime $p$, but  nontrivial over $\R$.\footnote{For an odd prime $p$ the algebra can only be nontrivial if $x_0^4 \equiv 16 \bmod p$ (as $-x_0^2-4$ must have odd valuation), but for this one must have $-2 \in \Q_p^{\times 2}$ since $-2 \equiv 2(x_0^4-17) \bmod p$. Similarly, to satisfy the equation defining $C$ one must have $x_0 \equiv \pm 3 \bmod 8$, from which it follows that $(-x_0^4-4,-2)_2$ is isomorphic to the trivial algebra $(3,-2)_2$ over $\Q_2$. The algebra is nontrivial over $\R$ because $-x_0^2-4$ is negative.} Thus, the class $\calA \in \Br C$ allows us to see that the existence of a rational point on $C$ is incompatible with the reciprocity law in the Brauer group of $\Q$.
		
		In general, if $C$ is defined over a number field and is locally solvable, our method allows us to construct explicit representatives for all $2$-torsion Brauer classes that are locally constant as in the example above (see Theorem~\ref{thm:LocallyConstantsInImage} for the precise statement). Modulo constant algebras, these classes correspond to elements of $\Sha(J)$, the Shafarevich-Tate group of $J$. As shown in \cite[Th\'eor\`eme 6]{Manin-BMobs}, the obstruction given by such an algebra is closely related to the Cassels-Tate pairing on $\Sha(J)$. Using this, our results give a practical algorithm for computing the Cassels-Tate pairing between the elements of $\Sha(J)[2]$ and the torsor $\Pic^1_C \in \Sha(J)$ parameterizing divisor classes of degree $1$ on $C$. In an example we carry out such computations for a family of quadratic twists, giving an infinite family of abelian surfaces over $\Q$ with nontrivial Shafarevich-Tate group.

	%%%%%%%%%%%%%%%%%%%%%%%%%%%%%%		
	\subsection{Construction of the algebras}
	%%%%%%%%%%%%%%%%%%%%%%%%%%%%%%
		Let $K$ be a field of characteristic different from $2$ and let $\pi : C \to \PP^1$ be an {irreducible} double cover of the projective line defined over $K$ with Jacobian $J := \Jac(C)$. We say that $\pi$ is \defi{odd} if $\pi$ is ramified above  $\infty \in \PP^1(K)$. Otherwise we say that $\pi$ is \defi{even}. Provided $K$ has sufficiently many elements (e.g. if $K$ is infinite) a change of coordinates on $\PP^1$ allows us to obtain an isomorphic double cover which is even. On the other hand, $\pi$ is isomorphic to an odd double cover if and only there is a $K$-rational ramification point. While there is thus no loss of generality in considering only even double covers, it is possible to obtain results that are sharper in the case of odd double covers (cf. Theorem \ref{thm:OddHypThm} and Remark \ref{rem:NeedHypInf}). We have chosen the notation below to allow the two cases to be treated in parallel.
		
	By Kummer theory, $C$ has a model of the form $y^2 = cf(x)$ with $c \in K^\times$ and $f(x)$ a square free monic polynomial with coefficients in $K$. Moreover, when $C$ (or equivalently $\deg(f)$) is odd, we can perform a change of coordinates to arrange that $c = 1$. Let $\Omega \subseteq C$ be the set of ramification points of $\pi$, and let $L = \Map_K(\Omega,\Kbar)$ denote the \'etale $K$-algebra corresponding to $\Omega$. When $C$ is even we may identify $K[\theta]/f(\theta)$ with $L$. When $C$ is odd, $K[\theta]/f(\theta)$ can be identified with the subalgebra $L_\circ \subseteq L$ consisting of elements $\ell \in L = \Map_K(\Omega,\Kbar)$ that take the value $1$ at the ramification point above $\infty \in \PP^1(K)$. In the odd case this gives a canonical isomorphism $L \cong L_\circ \times K$.
	
	Let $x - \alpha$ denote the image of $x - \theta$ in $\kk(C_L) := L \otimes_K \kk(C)$; in the odd case this means $x-\alpha$ is the image of $(x-\theta,1)$ in $\kk(C_{L_\circ})\times\kk(C)$. In~\cites{Schaefer-descent,PS-descent} $x-\alpha$ is applied to the classical problem of descents on Jacobians of hyperelliptic curves (see \S\ref{subsec:Outline}). In this paper we study the related homomorphism,
	\[
		\gamma' \colon L^\times \to (\Br \kk(C))[2], \quad \ell \mapsto \Cor_{\kk(C_L)/\kk(C)}((\ell,x-\alpha)_2)\,.
	\]
	This map constructs a central simple algebra over $\kk(C)$ from an element $\ell \in L^\times$. Proposition~\ref{prop:RT} below shows how to write $\gamma'(\ell)$ as a tensor product of quaternion algebras over $\kk(C)$. For example, when $C : y^2 = 2(x^4-17)$ the element $\ell = (-\theta^2-4) \in L^\times$ is mapped by $\gamma'$ to the quaternion algebra $\calA = (-x^2-4,-2)_2$ considered above (see Example~\ref{ex:RLcurve}).
	
	%%%%%%%%%%%%%%%%%%%%%%%%%%%%%%		
	\subsection{Statement of the results}
	%%%%%%%%%%%%%%%%%%%%%%%%%%%%%%
	We provide answers to the following questions:
	
	\begin{enumerate}
		\item Which elements of $L^\times$ are mapped by $\gamma'$ into $\Br C$?
		\item When do two elements of $L^\times$ map to the same class in $\Br \kk(C)$?
		\item Which elements of $\Br C$ lie in the image of $\gamma'$ and, in particular, when does $\gamma'$ surject onto $\Br C[2]?$
	\end{enumerate}
	
	Set %$\mathfrakL = \frac{L^\times}{K^\times L^{\times 2}}$.
	\[
		\mathfrakL = \frac{L^\times}{K^\times L^{\times 2}}
	\]
	For $a \in K^\times$ and $\ell \in L^\times$, we use $\overline{a}$ and $\overline\ell$ to denote the corresponding classes in $K^\times/K^{\times2}$ and $\mathfrakL$, and set
	\[  
		\mathfrak{L}_a = \left\{ \overline{\ell} \in \mathfrakL \,:\, \Norm_{L/K}\left(\overline{\ell}\right)\in\langle \overline{a}\rangle \right\}\,, 
	\]
	where $\Norm_{L/K}$ denotes the map $\mathfrakL \to K^\times/K^{\times 2}$ induced by the norm on $L$. Note that when $C$ is odd we have a canonical isomorphism $\mathfrakL \cong L^\times_\circ/L^{\times 2}_\circ$ under which $\Norm_{L/K}$ coincides with the map induced by the norm on $L_\circ$.
	
	The first question above is answered by the following.
	\begin{theorem}
		\label{thm:gammawelldefined}
		Let $\ell \in L^\times$. If $C$ is odd, then $\gamma'(\ell) \in \Br C$. If $C$ is even, then $\gamma'(\ell) \in \Br C$ if and only if $\overline{\ell} \in \mathfrakL_c$.		
	\end{theorem}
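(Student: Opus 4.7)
The plan is to use the Purity Theorem: $\gamma'(\ell)$ lies in $\Br C$ if and only if the residue $\partial_P(\gamma'(\ell))$ vanishes at every codimension-one point $P$ of $C$. I would invoke two standard tools: the tame-symbol formula $\partial_v((a,b)_2) \equiv a^{v(b)} b^{-v(a)} \pmod{k(v)^{\times 2}}$, and the compatibility of residues with corestriction,
\[
\partial_P\bigl(\Cor_{\kk(C_L)/\kk(C)}(A)\bigr) \;=\; \sum_{Q \mid P}\Cor_{k(Q)/k(P)}\bigl(\partial_Q(A)\bigr).
\]
Since $\ell \in L^\times$ is a nonzero constant, it is a unit at every place of $\kk(C_L)$, so the tame symbol at $Q$ collapses to $\partial_Q((\ell,x-\alpha)_2) = \overline{\ell}^{\,v_Q(x-\alpha)}$ in $k(Q)^\times/k(Q)^{\times 2}$.

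I would then read $v_Q(x - \alpha_i)$ on each component $C_{L_i}$ off the divisor of $x - \alpha_i$. Away from the branch locus and from $\infty$ this valuation is zero. At the ramification point above $\alpha_i$ we have $v_Q(x-\alpha_i) = 2$, so the residue is a square. In the odd case, the unique point over $\infty$ is itself ramified, yielding $v_Q(x-\alpha_i) = -2$ and again a trivial residue; together with the observation that on the $K$-summand of $L \cong L_\circ \times K$ the function $x - \alpha$ is the constant $1$ (contributing nothing), this already proves the odd case.

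In the even case only the closed point(s) $P$ above $\infty \in \PP^1$ can produce a nonzero residue, since $v_Q(x - \alpha_i) = -1$ at every $Q \mid P$ in $C_{L_i}$. The places of $\kk(C_{L_i})$ above $P$ correspond to the factors of the finite étale $k(P)$-algebra $k(P) \otimes_K L_i$, and the sum of corestrictions of the constant $\overline{\ell_i^{\,-1}}$ over these factors is precisely $\overline{\Norm_{L_i/K}(\ell_i)^{-1}}$ in $k(P)^\times/k(P)^{\times 2}$. Aggregating over the factors of $L$ gives
\[
\partial_P(\gamma'(\ell)) \;=\; \overline{\Norm_{L/K}(\ell)^{-1}} \;\in\; k(P)^\times/k(P)^{\times 2}.
\]

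To conclude I would identify $k(P)$: either $k(P) = K$ (when $c \in K^{\times 2}$ there are two such $K$-rational points, each yielding the same constraint) or $k(P) = K(\sqrt{c})$ (a single point if $c \notin K^{\times 2}$). In either case the kernel of $K^\times/K^{\times 2} \to k(P)^\times/k(P)^{\times 2}$ equals $\langle \overline{c}\rangle$, so $\partial_P(\gamma'(\ell)) = 0$ if and only if $\Norm_{L/K}(\overline{\ell}) \in \langle \overline{c}\rangle$, that is, $\overline{\ell} \in \mathfrakL_c$. The main technical care is in the preceding step: cleanly matching the places of $C_{L_i}$ above $P$ with the factors of $k(P) \otimes_K L_i$ so that the sum of the local corestrictions becomes the norm $\Norm_{L_i/K}$. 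Once that bookkeeping is in place, the theorem follows from a direct tame-symbol computation.
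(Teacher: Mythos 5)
Your proposal is correct and follows essentially the same route as the paper: apply purity, push the residue computation through the corestriction using the compatibility $\partial_v\circ\Cor = \sum_{w\mid v}\Norm_{\kk(w)/\kk(v)}\circ\partial_w$ (which is exactly the paper's Lemma~\ref{lem:CorResidue}, proved there via Milnor $K$-theory and Merkurjev--Suslin), observe that only the places above $\infty$ can contribute since $\ell$ is a unit and $x-\alpha$ has even valuation at the ramification points, and identify the resulting residue with $\Norm_{L/K}(\ell)^{-1}$ in $\kk(v)^\times/\kk(v)^{\times 2}$ with $\kk(v)=K(\sqrt{c})$. The only cosmetic difference is that you split the even case at infinity into the two subcases $c\in K^{\times 2}$ and $c\notin K^{\times 2}$ explicitly, whereas the paper treats it uniformly; the content is identical.
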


	As shown in \cites{Schaefer-descent,PS-descent} $x - \alpha$ induces a homorphism $\Pic C \to\, \mathfrak{L}$, where $\Pic C$ is the Picard group of $C$. This is defined as follows. For a closed point $P \in C \setminus (\Omega \cup \pi^{-1}(\infty))$ one defines $x(P)-\alpha = \prod_{i = 1}^d (x_i - \alpha) \in L^\times$, where $P(\Kbar) = \{(x_1,y_1), \ldots, (x_d,y_d)\}$. Every divisor class $[D] \in \Pic C$ can be represented by a sum $\sum_{P} n_PP$ of such closed points, and $(x-\alpha)([D])$ is defined to be the class of $\prod_P (x(P)-\alpha)^{n_P}$ in $\mathfrakL$.
		
	\begin{theorem}
		\label{thm:MainComplex}
		The map $\gamma'$ induces a complex of abelian groups
		\begin{equation*}
			\label{eq:Complex}
			\mathfrak{C}: \quad \frac{\Pic C}{2\Pic C} \stackrel{x-\alpha}\To \mathfrak{L}_c \stackrel{\gamma}\To \left(\frac{\Br C}{\Br_0 C}\right)[2] \stackrel{\partial_0}\To 0\,,
		\end{equation*}
		where $\Br_0C := \im\left(\Br K \to \Br C\right)$ is the subgroup of constant algebras.
	\end{theorem}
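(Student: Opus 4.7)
The plan is to verify three things: (i) $\gamma'$ factors through $\mathfrak{L}_c$ and lands in $(\Br C/\Br_0 C)[2]$; (ii) the composition $\gamma \circ (x-\alpha)$ vanishes; (iii) the final arrow to $0$ is automatic. For (i), bilinearity of the quaternion symbol immediately shows $\gamma'$ is a homomorphism, kills squares, and has image in $2$-torsion of $\Br\kk(C)$. To see that $\gamma'(K^\times) \subseteq \Br_0 C$, I would apply the projection formula: for $a \in K^\times$,
\[
\gamma'(a) = \Cor_{\kk(C_L)/\kk(C)}\bigl((a, x - \alpha)_2\bigr) = (a, \Norm_{\kk(C_L)/\kk(C)}(x-\alpha))_2 = (a, f(x))_2 = (a, c)_2,
\]
using $cf(x) = y^2$ in $\kk(C)^\times$; the resulting class lies in $\Br K \subseteq \Br_0 C$. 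Combined with Theorem~\ref{thm:gammawelldefined}, which pins down $\gamma'(\ell) \in \Br C$ exactly on $\mathfrak{L}_c$, this produces the desired $\gamma$.

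For (ii), $\Z$-linearity reduces the claim to verifying $\gamma'((x - \alpha)([P])) \in \Br_0 C$ for every closed point $P \notin \Omega \cup \pi^{-1}(\infty)$. I would first treat the case of a $K$-rational point $P_0 = (x_0, y_0)$: since $P_0 \notin \Omega$, the element $x_0 - \alpha$ is a unit in $L^\times$, so $a := (x-x_0)/(x-\alpha) \in \kk(C_L)^\times$ has Steinberg complement $1 - a = (x_0-\alpha)/(x-\alpha)$ also a unit. The vanishing of $(a, 1-a)_2$, together with bilinearity and $(u,u)_2 = (u,-1)_2$, gives
\[
(x_0 - \alpha, x-\alpha)_2 = (x-x_0, x_0-\alpha)_2 - (x-x_0, x-\alpha)_2 + (x-\alpha, -1)_2
\]
in $\Br\kk(C_L)[2]$. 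Applying $\Cor_{\kk(C_L)/\kk(C)}$ termwise via the projection formula, and using $\Norm_{L/K}(x_0-\alpha) = f(x_0) = y_0^2/c$ along with $\Norm_{\kk(C_L)/\kk(C)}(x-\alpha) = f(x) = y^2/c$, the first two terms both reduce to $(x - x_0, c)_2$ and cancel, leaving $\gamma'(x_0 - \alpha) = (-1, c)_2 \in \Br K \subseteq \Br_0 C$.

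For a closed point $P$ of degree $d > 1$, I would set $K' := K(P)$ and let $P' = (x', y') \in C(K')$ be the $K'$-rational point corresponding to $P$, so that $(x-\alpha)([P]) = \Norm_{L \otimes_K K'/L}(x' - \alpha)$. Applying the projection formula to $\Cor_{\kk(C_{L \otimes K'})/\kk(C_L)}$ and transitivity of corestriction along $\kk(C) \subset \kk(C_{K'}) \subset \kk(C_{L \otimes K'})$ identifies $\gamma'((x-\alpha)([P]))$ with $\Cor_{\kk(C_{K'})/\kk(C)}\bigl(\gamma'_{K'}(x' - \alpha)\bigr)$, where $\gamma'_{K'}$ denotes the analogous construction over $K'$. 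By the rational-point case, $\gamma'_{K'}(x' - \alpha) \in \Br_0 C_{K'}$, and compatibility of corestriction with base pullback then sends this class into the image of $\Br K \to \Br C$, hence into $\Br_0 C$. The main technical step is the Steinberg identity computation in the rational-point case; the higher-degree reduction is a formal consequence of projection and transitivity of corestriction.
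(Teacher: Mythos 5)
Your proposal is correct, and while steps (i) and (iii) coincide with the paper's argument (the projection-formula computation $\gamma'(aK^{\times}L^{\times 2}) = (a,c)_2$ is verbatim the paper's lemma), your step (ii) takes a genuinely different route to the key point, namely Proposition~\ref{prop:complex}. The paper works over $\PP^1$: it computes the residues of $\gamma'(x(P)-\alpha)$ at all places of $K(x)$ via Lemma~\ref{lem:CorResidue}, matches them against the explicit algebra $(cf(x),(-1)^{\deg(P)}p(x))_2$ (with $p$ the minimal polynomial of $x(P)$), and concludes from $\Br\PP^1_K=\Br K$ that the difference is constant, the comparison algebra dying in $\Br\kk(C)$ because $cf(x)=y^2$. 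You instead stay entirely within symbol calculus: the Steinberg relation for $a=(x-x_0)/(x-\alpha)$, whose complement $1-a=(x_0-\alpha)/(x-\alpha)$ is a unit precisely because $P\notin\Omega\cup\pi^{-1}(\infty)$, plus the projection formula, gives $\gamma'(x_0-\alpha)=(-1,c)_2$ for rational points, and transitivity of corestriction along $\kk(C)\subset\kk(C_{K'})\subset\kk(C_{L\otimes K'})$ reduces a closed point of degree $d$ to this case, yielding the sharper explicit answer $\Cor_{K'/K}((-1,c)_2)=((-1)^{d},c)_2$. Your method avoids residue computations altogether and even handles the ``pullback of a point of $\PP^1$'' case uniformly, which the paper treats separately; what the paper's method buys is an explicit model of $\gamma'(x(P)-\alpha)$ as an algebra over $K(x)$, in the same spirit as Proposition~\ref{prop:RT}, and it reuses the residue machinery already set up for Theorem~\ref{thm:gammawelldefined}. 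Two small points you should make explicit: the Steinberg relation must be checked componentwise in the \'etale algebra $\kk(C_L)$ (it holds since $f(x_0)\neq 0$ forces $a\neq 0,1$ in every factor), and the reduction to sums of closed points avoiding $\Omega\cup\pi^{-1}(\infty)$ rests on the moving argument cited from \cite{PS-descent}*{\S 5}.
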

	
	\begin{remark}
		If $C$ and $\pi$ are defined over a subfield $K_0 \subset K$ such that $K/K_0$ is Galois, then all of the groups in $\frak{C}$ have a natural action of $\Gal(K/K_0)$ and one sees from the definitions that the maps are $\Gal(K/K_0)$-equivariant.
	\end{remark}
	
	The second and third questions above ask to compute the groups
	\[
				\HH_1(\mathfrak{C}) := \frac{\ker(\gamma)}{\im(x-\alpha)}\,\text{ and }
				\HH_0(\mathfrak{C}) := \frac{\ker(\partial_0)}{\im(\gamma)}\,.
	\]
	For good measure, the group $\HH_2(\mathfrak{C}):= \ker(x-\alpha)$ is determined in Proposition~\ref{prop:dimImage}.
	\begin{theorem}
		\label{thm:ComputeH1}
		Let $\Pic^1C$ denote the set of divisor classes of degree $1$ on $C$ and let $\Pic^1_C(K)$ denote the subset of divisor classes of degree $1$ on $C_\Kbar$ that are fixed by Galois. Then
		\[
			\HH_1(\mathfrak{C}) = 
			\begin{cases}
				\mathfrak{L}_c/\mathfrak{L}_1 & \text{if $\Pic^1C = \emptyset \ne \Pic^1_C(K)$,} \\
				0 & \text{ otherwise.}
			\end{cases}
		\]
		In particular, this group has order at most $2$.
	\end{theorem}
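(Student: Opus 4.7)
The proof decomposes $\HH_1(\mathfrak{C}) = \ker(\gamma)/\im(x-\alpha)$ into a norm-preserving part inside $\mathfrakL_1$ and a norm-shifting part in $\mathfrakL_c/\mathfrakL_1$. The norm computation does the heavy lifting: for a closed point $P$ of degree $d$, using $y_0^2 = cf(x_0)$ one has $\Norm_{L/K}(x(P)-\alpha) = \prod f(x_i) = (\prod y_i)^2/c^d$, hence
\[
\Norm_{L/K}\bigl((x-\alpha)([D])\bigr) \equiv c^{\deg D} \pmod{K^{\times 2}}
\]
for every $[D] \in \Pic C$. In particular, the image of $(x-\alpha) \colon \Pic C/2\Pic C \to \mathfrakL_c$ is contained in $\mathfrakL_1$ exactly when $\Pic^1 C = \emptyset$; otherwise it surjects onto $\mathfrakL_c/\mathfrakL_1$.

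For the norm-preserving part, I would show $\ker(\gamma) \cap \mathfrakL_1 = \im(x-\alpha) \cap \mathfrakL_1$ via a cohomological interpretation of $\ker(\gamma)$. The Kummer sequence $0 \to \Pic C/2 \to \HH^1(C,\mu_2) \to \Br(C)[2] \to 0$, combined with an identification (coming from the explicit description of $\gamma'$) of $\mathfrakL_c$ modulo constants with the relevant piece of $\HH^1(C,\mu_2)$, shows that an element of $\ker(\gamma) \cap \mathfrakL_1$ corresponds to a class of even degree in $\Pic C/2$, and hence lies in $\im(x-\alpha)$. The remaining quotient $\ker(\gamma)/\bigl(\ker(\gamma)\cap\mathfrakL_1 + \im(x-\alpha)\bigr)$ is then a subquotient of $\mathfrakL_c/\mathfrakL_1$, which is of order at most $2$, accounting for the final assertion of the theorem.

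For the case analysis: if $\Pic^1 C \neq \emptyset$, the first step shows $\im(x-\alpha)$ already surjects onto $\mathfrakL_c/\mathfrakL_1$ and the quotient vanishes, so $\HH_1(\mathfrak{C}) = 0$. If $\Pic^1_C(K) = \emptyset$, an analogous cohomological argument (now performed over $\Kbar$ and descended) shows that a class in $\ker(\gamma)$ of norm $c$ would lift to a Galois-fixed Picard class of degree $1$, which by hypothesis does not exist, so again $\HH_1(\mathfrak{C}) = 0$. In the remaining case $\Pic^1 C = \emptyset \neq \Pic^1_C(K)$, a Galois-fixed class of degree $1$ --- which necessarily does not come from a rational divisor class --- produces an element of $\ker(\gamma)\cap(\mathfrakL_c \setminus \mathfrakL_1)$ not lying in $\im(x-\alpha)$, yielding $\HH_1(\mathfrak{C}) \cong \mathfrakL_c/\mathfrakL_1$.

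The main obstacle is the cohomological identification in Step 2: bridging the explicit quaternion-corestriction construction of $\gamma'$ with the étale Kummer sequence on $C$, and carrying out Galois descent from $C_\Kbar$ despite the ambiguity introduced by the quotient by $K^\times L^{\times 2}$. This descent is the pivot on which the distinction between $\Pic^1 C$ and $\Pic^1_C(K)$ rests; without it one cannot detect the single extra coset in $\mathfrakL_c/\mathfrakL_1$ that accounts for the nontrivial case of the theorem.
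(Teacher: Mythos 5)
Your skeleton --- splitting $\ker(\gamma)/\im(x-\alpha)$ along the norm filtration $\mathfrakL_1 \subseteq \mathfrakL_c$, observing that $\Norm_{L/K}\bigl((x-\alpha)([D])\bigr) \equiv c^{\deg D} \bmod K^{\times 2}$ so that the $\mathfrakL_c/\mathfrakL_1$-part of $\im(x-\alpha)$ is governed by $\Pic^1C$, and then doing a case analysis --- matches the architecture of the paper's proof (the diagram~\eqref{diag:Lcextension} together with the norm facts imported from \cite{CreutzANTSX}). But both load-bearing steps are asserted rather than proved, and they are exactly where the content lies. The identity $\ker(\gamma)\cap\mathfrakL_1 = \im(x-\alpha)$ is Proposition~\ref{prop:L1exact}; your proposed route via ``the Kummer sequence $0 \to \Pic C/2 \to \HH^1(C,\mu_2) \to \Br(C)[2]\to 0$'' has the cohomological degree wrong (the Brauer group sits under $\HH^2_{\textup{\'et}}(C,\mu_2)$, not $\HH^1$), and, more seriously, the ``identification of $\mathfrakL_c$ modulo constants with the relevant piece'' compatible with both $\gamma'$ and $x-\alpha$ is the hard part: the paper spends \S\ref{sec:hofgamma}--\S\ref{sec:2descent} proving that $h_0\circ\gamma$ equals the Poonen--Schaefer descent map $d$ (Proposition~\ref{prop:xi}, Lemma~\ref{lem:described'}, via an explicit cocycle computation) precisely so that it can import $\ker(d)=\im(x-\alpha)$ from \cite{PS-descent}. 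Nothing in your sketch substitutes for that compatibility.

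The second gap is the mechanism that makes $\Pic^1_C(K)$, rather than some other invariant, the correct dividing line. You assert that a class in $\ker(\gamma)$ of norm $c$ ``would lift to a Galois-fixed Picard class of degree $1$,'' and conversely that a Galois-fixed degree-one class produces an element of $\ker(\gamma)\cap(\mathfrakL_c\setminus\mathfrakL_1)$ outside $\im(x-\alpha)$, but give no argument for either direction. In the paper both rest on Lemma~\ref{lem:Lccoset}: for $\overline\ell\in\mathfrakL_c\setminus\mathfrakL_1$ the cocycle $\xi_\ell$ representing $h_0(\gamma'(\ell))$ satisfies $2\xi_\ell(\sigma)=\tilde\chi_c(\sigma)([\infty^-]-[\infty^+])$, a cocycle representing the class of $\Pic^1_C$ in $\HH^1(K,J)$, so $\gamma(\mathfrakL_c\setminus\mathfrakL_1)$ can meet the subgroup $\gamma(\mathfrakL_1)\subseteq\Br_2C/\Br_0C$ only when that class vanishes, i.e.\ when $\Pic^1_C(K)\ne\emptyset$. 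Without some such computation you can neither exclude a nonzero $\HH_1(\mathfrak{C})$ when $\Pic^1_C(K)=\emptyset$ nor establish nonvanishing in the exceptional case (where one must also extend $x-\alpha$ across the obstruction measured by $\Pic_C(K)/\Pic C$, which is what the quotient by $K^\times$ in $\mathfrakL$ permits). So the proposal predicts the right answer and the right dichotomy, but has a genuine gap at both pivotal steps.
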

	
	Our description of the group $\HH_0(\mathfrak{C})$ is more difficult to state (see Proposition~\ref{prop:L1exact}). The next three theorems give special cases in which it is trivial.
		
	\begin{theorem}
		\label{thm:ExactIfC1}
		If $(\Br K)[2] = 0$, then $\mathfrak{C}$ is an exact sequence.
	\end{theorem}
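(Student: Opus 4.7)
The plan is to verify both $\HH_1(\mathfrak{C}) = 0$ and $\HH_0(\mathfrak{C}) = 0$ under the hypothesis $(\Br K)[2] = 0$. Since $\partial_0$ is surjective by definition of the complex, exactness then follows.

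For $\HH_1(\mathfrak{C}) = 0$, I would invoke Theorem~\ref{thm:ComputeH1}, which reduces the task to ruling out the dichotomy case $\Pic^1 C = \emptyset \ne \Pic^1_C(K)$. The obstruction to a Galois-fixed divisor class descending to $\Pic C$ is controlled by the low-degree terms of the Leray spectral sequence for $\Gm$ on $C$, which yield
\[
0 \To \Pic C \To \Pic(C_{\Kbar})^{\Gal(\Kbar/K)} \xrightarrow{\,\delta\,} \Br K.
\]
Because $\pi$ has a fiber over any $K$-rational point of $\PP^1$ (in particular over $\infty$ in both the odd and even cases), $C$ carries a $K$-rational effective divisor class of degree $2$. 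Hence for any $[D] \in \Pic^1_C(K)$ one has $2[D] \in \Pic^2 C$, so $2\,\delta([D]) = \delta(2[D]) = 0$. Therefore $\delta(\Pic^1_C(K)) \subseteq (\Br K)[2] = 0$, which forces $\Pic^1 C = \Pic^1_C(K)$ and excludes the only case in which $\HH_1(\mathfrak{C})$ could be nontrivial.

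For $\HH_0(\mathfrak{C}) = 0$, I would appeal to the structural computation of this group already carried out in Proposition~\ref{prop:L1exact}. I expect that proposition to present $\HH_0(\mathfrak{C})$ as a subquotient of a Galois cohomology group controlled by (a subgroup of) $(\Br K)[2]$; such a control arises naturally from the corestriction $\Br \kk(C_L) \to \Br \kk(C)$ appearing in the definition of $\gamma'$, together with the observation that the ambiguity in lifting a $2$-torsion class of $\Br C/\Br_0 C$ to something in the image of $\gamma'$ is precisely measured by a constant algebra of order dividing $2$. Once that structural identification is in place, the hypothesis $(\Br K)[2]=0$ annihilates the obstruction, yielding surjectivity of $\gamma$ onto $(\Br C/\Br_0 C)[2]$.

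The main obstacle is of course the surjectivity step. The direct route --- starting from a $2$-torsion class $\beta \in \Br C$, invoking Merkurjev's theorem to write its image in $\Br \kk(C)$ as a sum of quaternion algebras, and then rearranging these using the Faddeev residue sequence for $\kk(C)$ (using that $\beta$ is unramified on $C$) until the expression assembles into a corestriction $\gamma'(\ell)$ from $\kk(C_L)$ --- is technical and relies crucially on the precise form of Proposition~\ref{prop:L1exact}. The entire role of the hypothesis $(\Br K)[2]=0$ in the argument is to kill the residual constant-algebra obstruction that prevents such a rearrangement in general.
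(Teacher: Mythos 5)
Your reduction to showing $\HH_1(\mathfrak{C}) = \HH_0(\mathfrak{C}) = 0$ is the right frame, and the $\HH_1$ half is essentially sound: under $(\Br K)[2]=0$ the exceptional case $\Pic^1C = \emptyset \ne \Pic^1_C(K)$ of Theorem~\ref{thm:ComputeH1} cannot occur. One step of your justification is a non sequitur, however: the existence of a single $K$-rational divisor of degree $2$ does not give $2[D] \in \Pic^2 C$ for an arbitrary $[D] \in \Pic^1_C(K)$ --- that is an instance of the very descent statement at issue. The correct argument is that the image of the obstruction map $\Pic_C(K) \to \Br K$ equals $\ker(\Br K \to \Br C)$, which is killed by the index of $C$ (restriction--corestriction through a closed point of degree dividing $2$, e.g.\ a fiber of $\pi$), hence lands in $(\Br K)[2]=0$. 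This is fixable.

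The genuine gap is in the surjectivity of $\gamma$, which is the heart of the theorem. Proposition~\ref{prop:L1exact} does not present $\HH_0(\mathfrak{C})$ as a subquotient controlled by $(\Br K)[2]$; it says only that $\gamma(\mathfrakL_1) = \Br_2^\Upsilon C/\Br_0C$. The hypothesis $(\Br K)[2]=0$ kills the $\Upsilon$-obstruction (Lemma~\ref{lem:BrUpsilon}), giving $\gamma(\mathfrakL_1) = \Br_2C/\Br_0C$, but $\Br_2C/\Br_0C$ may still have index $2$ in $(\Br C/\Br_0C)[2]$, namely when $\Pic^1_C(K)=\emptyset$ and $\Pic^1_C$ is a nonzero element of $2\HH^1(K,J)$ (Lemma~\ref{lem:Br_2}). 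This residual discrepancy is \emph{not} ``measured by a constant algebra of order dividing $2$''; it is the failure of a class in $\HH^1(K,\Pic_C)[2]$ to lift to $\HH^1(K,J)[2]/\langle \Pic^1_C\rangle$. To cover it one must use the coset $\mathfrakL_c\setminus\mathfrakL_1$: Lemma~\ref{lem:Lccoset} shows $\gamma$ sends this coset outside $\Br_2C/\Br_0C$, so it suffices to prove $\mathfrakL_c\ne\mathfrakL_1$, i.e.\ that some $\ell\in L^\times$ satisfies $\Norm_{L/K}(\ell)\in cK^{\times 2}$. That is the real content of the paper's proof: since $\Pic^1_C\in 2\HH^1(K,J)$ there is a $2$-covering $\psi:Y\to C$; one pulls back a ramification class $[\omega]$ and uses $(\Br K)[2]=0$ a second time to show $\psi^*[\omega]$ is represented by a rational divisor, whence $Y$ admits a model parameterized by such an $\ell$. (If $K$ is assumed $C_1$ this can be replaced by solving the normic equation $\Norm_{L/K}(\ell)=ca^{[L:K]}$.) Your proposal contains no argument for this step, and the ``direct route'' you sketch via Merkurjev and Faddeev residues is not what the paper does and gives no indication of how it would terminate.
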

	
	\begin{theorem}
		\label{thm:EvenOddThm}
		If $\Omega$ admits a Galois-stable partition into two sets of odd cardinality, then $\mathfrak{C}$ has an exact subcomplex
		\[
			0 \to J(K)/2J(K) \stackrel{x-\alpha}\To \mathfrak{L}_1 \stackrel{\gamma}\To \left(\frac{\Br C}{\Br_0 C}\right)[2] \to 0\,.
		\]
	\end{theorem}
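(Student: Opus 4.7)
The plan is to deduce exactness of the subcomplex from exactness of the main complex $\mathfrak{C}$ (provided by Theorem~\ref{thm:ComputeH1}) via a snake-lemma argument, in which the partition hypothesis supplies the extra structure needed.

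Writing $\Omega = \Omega_1 \sqcup \Omega_2$ with both $\Omega_i$ Galois-stable of odd cardinality, the divisor $D_0 := \sum_{W \in \Omega_1} W - \tfrac{|\Omega_1|-1}{2}\,\pi^{*}\infty$ is Galois-invariant of degree~$1$. Hence $\Pic^1_C(K) \neq \emptyset$, so by Theorem~\ref{thm:ComputeH1} we have $\HH_1(\mathfrak{C}) = 0$ (the main complex is exact) and $J(K) = \Pic^0 C$. The identity $\Norm_{L/K}(x-\alpha) = f(\theta)$ yields $\Norm_{L/K}\bigl((x-\alpha)([D])\bigr) \equiv c^{\deg D} \pmod{K^{\times 2}}$ for every $[D] \in \Pic C$, so $(x-\alpha)$ carries $J(K)/2J(K)$ into $\mathfrak{L}_1$, giving the left vertical map of the subcomplex.

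The core argument rests on the commutative diagram with exact rows
\[
\begin{array}{ccccccccc}
0 & \to & J(K)/2J(K) & \to & \Pic C/2\Pic C & \stackrel{\deg}{\to} & \Z/2 & \to & 0 \\
  &     & \downarrow  &     & \downarrow & & \downarrow {\scriptstyle \rho} & & \\
0 & \to & \mathfrak{L}_1 & \to & \mathfrak{L}_c & \stackrel{\Norm}{\to} & \mathfrak{L}_c/\mathfrak{L}_1 & \to & 0,
\end{array}
\]
where the unlabeled verticals are $(x-\alpha)$ and $\rho$ sends $d \bmod 2$ to $\bar c^{\,d}$. The partition provides $\tilde\ell := (1, c) \in L_1 \times L_2 = L$ with $\Norm_{L/K}(\tilde\ell) = c^{|\Omega_2|} \equiv c \pmod{K^{\times 2}}$ since $|\Omega_2|$ is odd, identifying $\mathfrak{L}_c/\mathfrak{L}_1$ with $\langle \bar c \rangle \subseteq K^\times/K^{\times 2}$. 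In the case $c \notin K^{\times 2}$, $\rho$ is therefore an isomorphism, and the snake lemma, combined with the Poonen--Schaefer injectivity of $J(K)/2J(K) \to \mathfrak{L}$ and the surjectivity of $\gamma \colon \mathfrak{L}_c \to (\Br C/\Br_0 C)[2]$ (from exactness of $\mathfrak{C}$), gives all three required exactness statements for the subcomplex at once.

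The main obstacle is the residual case $c \in K^{\times 2}$, in which $\rho$ is the zero map $\Z/2 \to 0$ and the snake sequence leaves a potential connecting obstruction of order at most $2$. Dispensing with this obstruction amounts to showing $(x-\alpha)(D_0) \in (x-\alpha)\bigl(J(K)/2J(K)\bigr)$. After normalizing to $c = 1$ and invoking Schaefer's formula for divisors supported on $\Omega$, I compute $(x-\alpha)(D_0) \equiv (1/f_2(\alpha_1),\, -f_1(\alpha_2)) \pmod{K^\times L^{\times 2}}$ in the partition decomposition $L = L_1 \times L_2$. Since $|\Omega_2|$ is odd, $\Omega_2$ contains a Galois orbit $\mathcal{O}$ of odd cardinality, and $T := D_{\Omega_1 \cup \mathcal{O}} - \tfrac{|\Omega_1|+|\mathcal{O}|}{2}\,\pi^{*}\infty$ lies in $J[2](K)$. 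Comparing Schaefer's formula $(x-\alpha)(T) = \tilde f_{\Omega_1 \cup \mathcal{O}}(\alpha)$ with $(x-\alpha)(D_0)$, and using the principal divisor relations $\divv(f_i(x)) = 2 D_i - (\deg f_i)\,\pi^{*}\infty$ in $\Pic C$, identifies $(x-\alpha)(D_0)$ as the $(x-\alpha)$-image of a suitable combination of $T$ and elements of $J(K)/2J(K)$, thereby completing the proof.
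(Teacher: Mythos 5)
Your argument has a genuine gap at its central step: you invoke ``the surjectivity of $\gamma\colon \mathfrak{L}_c \to (\Br C/\Br_0 C)[2]$ (from exactness of $\mathfrak{C}$),'' but no such exactness is available to you. Theorem~\ref{thm:ComputeH1} computes only $\HH_1(\mathfrak{C})$, i.e.\ exactness at the middle term $\mathfrak{L}_c$; it says nothing about $\HH_0(\mathfrak{C}) = (\Br C/\Br_0C)[2]/\im(\gamma)$, and the paper stresses (Remark~\ref{rem:Br2neBrUps}) that this group is in general nonzero. The vanishing of $\HH_0(\mathfrak{C})$ under the partition hypothesis is precisely the content of Theorem~\ref{thm:EvenOddThm}, so assuming it makes the argument circular. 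Granting that surjectivity, your snake-lemma bookkeeping on the degree filtration does recover injectivity at $J(K)/2J(K)$ and exactness at $\mathfrak{L}_1$ (and, when $c\notin K^{\times 2}$, your observation that $(1,c)\in L_1\times L_2$ has norm $c^{|\Omega_2|}\equiv c$ correctly shows $\mathfrak{L}_c\neq\mathfrak{L}_1$), but those are the easier assertions.

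What the missing surjectivity actually requires is the cohomological identification of $\gamma$ with the $2$-descent map: Proposition~\ref{prop:xi} and Lemma~\ref{lem:described'} show that $h_0\circ\gamma$ agrees with $d$ on $\mathfrak{L}_1$, whence Proposition~\ref{prop:L1exact} gives $\gamma(\mathfrak{L}_1) = \Br_2^\Upsilon C/\Br_0C$. The partition hypothesis then enters in two ways your proposal does not use: it makes the class of $\Pic^1_C$ in $\HH^1(K,J[2])$ trivial (Lemma~\ref{lem:oddpartition}), so the Weil-pairing obstruction $\Upsilon$ (cup product with $\Pic^1_C$) vanishes and $\Br_2^\Upsilon C = \Br_2 C$ (Lemma~\ref{lem:BrUpsilon}); and it gives $\Pic^1_C(K)\neq\emptyset$, so $\Br_2C/\Br_0C = (\Br C/\Br_0C)[2]$ (Lemma~\ref{lem:Br_2}). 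Your proposal never engages with $\Upsilon$ or with the map $h_0$ into $\HH^1(K,\Pic_C)$, and there is no route to the surjectivity statement that avoids this machinery; the divisor-theoretic computations in your final paragraph address only the image of $x-\alpha$, not the image of $\gamma$.
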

		
	\begin{theorem}
		\label{thm:OddHypThm}
		If $C$ is odd, then $\gamma'$ induces an exact sequence
		\[
			0 \to J(K)/2J(K) \stackrel{x-\alpha}\To \mathfrak{L}_1 \stackrel{\gamma}\To (\Br^0C)[2] \to 0\,,
		\]
		where $\Br^0C$ denotes the subgroup of $\Br C$ consisting of Brauer classes that evaluate to $0$ at the $K$-rational ramificiation point of $C$ lying above $\infty \in \PP^1(K)$. 
	\end{theorem}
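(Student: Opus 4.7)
The plan is to bootstrap Theorem~\ref{thm:EvenOddThm}. In the odd case we may arrange $c = 1$, so $\mathfrak{L}_c = \mathfrak{L}_1$, and the $K$-rational ramification point $\infty_C$ above $\infty\in\PP^1(K)$ furnishes a Galois-stable partition $\Omega = \{\infty_C\} \sqcup (\Omega \setminus \{\infty_C\})$ into two sets of odd cardinality ($1$ and $\deg f = 2g + 1$). Theorem~\ref{thm:EvenOddThm} therefore delivers the exact sequence
\[
0 \to J(K)/2J(K) \xrightarrow{x-\alpha} \mathfrak{L}_1 \xrightarrow{\gamma} (\Br C/\Br_0 C)[2] \to 0.
\]

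Next I would exploit that $\infty_C \in C(K)$ makes evaluation $\eval_{\infty_C}\colon \Br C \to \Br K$ a retraction of $\Br K \to \Br C$. This produces a direct-sum decomposition $\Br C = \Br^0 C \oplus \Br_0 C$ under which the natural map $(\Br^0 C)[2] \to (\Br C/\Br_0 C)[2]$ is an isomorphism. The theorem then reduces to showing that $\gamma'(\ell) \in \Br^0 C$ for every $\ell \in L^\times$: once this is known, $\gamma'$ itself (rather than only its image modulo constants) realises the claimed exact sequence.

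The crux is therefore the local computation $\eval_{\infty_C}\gamma'(\ell) = 0$ in $\Br K$. Using $L = L_\circ \times K$, the class reads $\gamma'(\ell) = \Cor_{\kk(C_{L_\circ})/\kk(C)}((\ell_\circ, x - \theta)_2)$, and commuting corestriction with base change reduces matters to showing that $(\ell_\circ, x-\theta)_2$ evaluates trivially in $\Br L_\circ$ at the $L_\circ$-rational fibre above $\infty_C$. The natural move is to modify the right slot by a square so that it becomes a unit at $\infty_C$. Introducing $u = 1/x$ and $v = y/x^{(\deg f + 1)/2}$, the local model $v^2 = u\,g(u)$ with $g(u) = u^{\deg f}\,f(1/u)$ (so $g(0) = 1$) yields the key identity
\[
(x - \theta)\cdot v^2 \;=\; (1 - u\theta)\,g(u)
\]
in $\kk(C_{L_\circ})$. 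Hence $(\ell_\circ, x-\theta)_2 = (\ell_\circ, (1 - u\theta)\,g(u))_2$, and the second slot takes the value $1$ at $\infty_C$, so the evaluation is $(\ell_\circ, 1)_2 = 0$ as required.

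The main obstacle is this multiplicative trick: once the identity above is in hand the rest is formal bookkeeping, but finding a single representative of $\gamma'(\ell)$ that is simultaneously unramified at $\infty_C$ and trivial there is the step where the arithmetic of the odd model is genuinely used.
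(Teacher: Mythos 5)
Your proposal is correct and follows essentially the same route as the paper: the splitting $\Br C \cong \Br^0C \oplus \Br_0C$ coming from the rational point $\infty_C$, the surjection onto $(\Br C/\Br_0C)[2]$ via the odd partition of $\Omega$ (the paper gets this from Proposition~\ref{prop:L1exact} together with Lemmas~\ref{lem:Br_2} and~\ref{lem:BrUpsilon}, which is exactly how it proves Theorem~\ref{thm:EvenOddThm}), and the key evaluation $\eval_{\infty_C}\gamma'(\ell)=0$. Your substitution $u=1/x$, $v=y/x^{(\deg f+1)/2}$ is the same multiplicative trick as the paper's Lemma~\ref{lem:Br^0}, which replaces $x-\alpha$ by the square-equivalent unit $(x-\alpha)^{\deg f}/y^2$ taking the value $1$ at $\infty_C$.
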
	
	
	When $K$ is a number field and $C$ is generic, $\HH_0(\mathfrak{C})$ does not vanish (see Remark~\ref{rem:Br2neBrUps}). However, as mentioned above, the image of $\gamma$ is large enough for interesting arithmetic applications.
										
	\begin{theorem}
		\label{thm:LocallyConstantsInImage}
		Suppose that $K$ is a number field and $C$ is locally solvable. If $\calA \in (\Br C)[2]$ is locally constant (in the sense that $(\calA \otimes K_v) \in \Br_0 C_{K_v}$ for each completion $K_v$ of $K$), then the image of $\calA$ in $\HH_0(\mathfrak{C})$ is trivial.
	\end{theorem}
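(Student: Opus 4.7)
The plan is to reduce the statement to the Albert--Brauer--Hasse--Noether Hasse principle for $(\Br K)[2]$, by identifying the obstruction to $\calA$ lying in $\gamma(\mathfrak{L}_c) + \Br_0 C$ as an element of a subquotient of $\Br K$.

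First, I would use the detailed description of $\HH_0(\mathfrak{C})$ supplied by Proposition~\ref{prop:L1exact} (invoked from earlier in the paper) to produce a natural homomorphism from $\HH_0(\mathfrak{C})$ into a subquotient of Galois cohomology with values ultimately in $(\Br K)[2]$. Concretely, the cokernel of $\gamma$ should be controlled by the image of a coboundary $(\Br C/\Br_0 C)[2] \to \Br K$, obtained by unwinding the corestriction $\Cor_{\kk(C_L)/\kk(C)}$ in the definition of $\gamma'$ and applying the norm-restriction sequence for $L/K$. This turns the geometric question ``is $\calA$ hit by $\gamma$'' into the arithmetic question ``is a certain class $\mathrm{ob}(\calA) \in (\Br K)[2]$ trivial''.

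Second, I would verify that this obstruction map is functorial with respect to base change $K \to K_v$, i.e.\ that it fits into a commuting square with the localization maps $\Br K \to \Br K_v$ and $\Br C \to \Br C_{K_v}$. This is essentially a tautology from the compatibility of corestriction with base change on \'etale algebras, but the bookkeeping of the norm-one condition cutting out $\mathfrak{L}_c$ from $\mathfrak{L}$ must be tracked carefully.

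Third, I would exploit the local hypotheses. Local solvability supplies, for each place $v$ of $K$, a point $P_v \in C(K_v)$, and local constancy of $\calA$ gives $\calA_{K_v} = \pi_{K_v}^*(\eval_{P_v}(\calA))$, which lies in $\Br_0 C_{K_v}$. The image of $\calA_{K_v}$ in the local $\HH_0(\mathfrak{C}_{K_v})$ therefore vanishes (one may take the trivial local preimage $\ell_v=1$). By functoriality, $\mathrm{ob}(\calA) \in (\Br K)[2]$ has trivial image in $(\Br K_v)[2]$ for every $v$. Injectivity of $\Br K \to \bigoplus_v \Br K_v$ (Albert--Brauer--Hasse--Noether) then forces $\mathrm{ob}(\calA) = 0$, so that $\calA \in \gamma(\mathfrak{L}_c) + \Br_0 C$ as required.

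The main obstacle is the first step: extracting from Proposition~\ref{prop:L1exact} a genuine embedding $\HH_0(\mathfrak{C}) \hookrightarrow (\Br K)[2]$ (or a subquotient satisfying a Hasse principle), with the precise functoriality needed for Step~3. The delicate point is that $\HH_0(\mathfrak{C})$ is a priori only a quotient of $(\Br C/\Br_0 C)[2]$, and one must show it is controlled by $\Br K$ rather than by some $\HH^1$ that fails the Hasse principle. This is where the norm conditions defining $\mathfrak{L}_c$ play a crucial role, essentially ensuring that the obstruction lives in the image of restriction from $\Br K$ rather than acquiring genuinely nontrivial local contributions.
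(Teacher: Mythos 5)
There is a genuine gap, and it is precisely the point you flag as ``the main obstacle'': the claimed embedding of $\HH_0(\mathfrak{C})$ into $(\Br K)[2]$ does not exist in general, so the reduction to Albert--Brauer--Hasse--Noether only covers part of the theorem. What your obstruction class $\mathrm{ob}(\calA)$ can be made to compute is the map $\Upsilon$ of \eqref{eq:define_d}: for $\calA \in \Br_2C$ (i.e.\ when $h(\calA)$ lifts to $\HH^1(K,J[2])$), the obstruction to $\calA$ lying in $\gamma(\mathfrak{L}_1)+\Br_0C$ is $\Upsilon$ of a lift, valued in $\Br K[2]$, and your local-vanishing-plus-reciprocity argument is essentially how \cite[Theorem 13.3]{PS-descent} is proved; combined with Proposition~\ref{prop:L1exact} this gives $\Br_{\ssha,2}C/\Br_0C \subseteq \gamma(\mathfrak{L}_1)$, which is the first half of the paper's proof (Proposition~\ref{prop:BrShainBrUps}). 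But by Lemma~\ref{lem:Br_2} the inclusion $\Br_2C/\Br_0C \subseteq (\Br C/\Br_0C)[2]$ can have index $2$, exactly when $\Pic^1_C$ is nonzero and divisible by $2$ in $\HH^1(K,J)$. In that case there are locally constant classes $\calA$ with $h(\calA)$ the image of some $W \in \Sha(J)$ satisfying $2W = \Pic^1_C \ne 0$; such an $h(\calA)$ admits no lift to $\HH^1(K,J[2])$, the relevant obstruction lives in $\Sha(J)$ rather than in $\Br K$, and $\Sha(J)$ is by definition the failure of the Hasse principle, so no ABHN-type injectivity is available. Your Step 1 cannot produce a class in $(\Br K)[2]$ detecting these $\calA$, and your Step 3 therefore proves nothing about them.

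The paper closes this gap by a different mechanism: Lemma~\ref{lem:Lccoset} shows that elements of $\mathfrak{L}_c\setminus\mathfrak{L}_1$ map under $\gamma$ outside $\Br_2C/\Br_0C$, and \cite[Theorem 4.6]{CreutzANTSX} supplies, whenever such a $W$ exists, an explicit $\overline{\ell} \in \mathfrak{L}_c\setminus\mathfrak{L}_1$ that is everywhere locally in the image of $x-\alpha$ on $\Pic^1 C_{K_v}$; then $\gamma(\overline{\ell})$ lands in $\Br_\sha C/\Br_0C$ but not in $\Br_2C/\Br_0C$, hence generates the missing order-$2$ quotient. This existence statement (an everywhere-locally-soluble $2$-covering of $\Pic^1_C$ admitting a model of the prescribed type) is a nontrivial input with no analogue in your outline. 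To repair your proposal you would need either to import that result or to prove directly that when $\Pic^1_C \in 2\Sha(J)\setminus\{0\}$ the coset $\mathfrak{L}_c\setminus\mathfrak{L}_1$ is nonempty and meets $\gamma^{-1}(\Br_\sha C/\Br_0 C)$; neither follows from reciprocity in $\Br K$.
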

	
	Some applications of this result are explored in \S\ref{sec:CTpairing}.

%%%%%%%%%%%%%%%%%%%%%%%%%%%%
	\subsection{Outline of the proofs}
	\label{subsec:Outline}
%%%%%%%%%%%%%%%%%%%%%%%%%%%%

	Theorems~\ref{thm:gammawelldefined} and~\ref{thm:MainComplex} are proved by explicitly computing the residues of an algebra of the form $\Cor_{\kk(C_L)/\kk(C)}((\ell,x-\alpha)_2)$, and then applying the purity theorem~\cite{Fujiwara-purity}. This is carried out in section \S\ref{sec:gamma}.

	The proofs of the other theorems are inspired by the classical problem of $2$-descents on Jacobians of hyperelliptic curves. Here one attempts to compute $J(K)/2J(K)$ by describing its image under the connecting homomorphism $\delta$ in the Kummer sequence,
	\begin{equation}\label{eq:Kummer}
		0 \to J(K)/2J(K) \stackrel{\delta}\to \HH^1(K,J[2]) \to \HH^1(K,J)[2] \to 0\,.
	\end{equation}
	To make use of this in practice, one requires concrete descriptions of $\HH^1(K,J[2])$ and the map $\delta$. When $C$ is odd this is achieved in \cite{Schaefer-descent} by giving an explicit isomorphism $\HH^1(K,J[2]) \simeq \mathfrakL_1$ whose composition with $\delta$ is equal to the $x-\alpha$ map.  Moreover, the existence of a rational point implies an isomorphism $\HH^1(K,J)[2] \simeq \Br^0(C)[2]$. Together with~\eqref{eq:Kummer} these isomorphisms imply the existence of an exact sequence as stated in Theorem \ref{thm:OddHypThm}, and the task is to verify that the description of $\gamma$ given is correct. This will ultimately be achieved by a cocycle computation. In the case that $C = J$ is an elliptic curve with rational $2$-torsion this has been carried out in \cite{Wittenberg-transcendental}*{Prop. 2.2} (see also \cite{Skorobogatov-torsors}*{p.91}).
		
	When $C$ is even, there are complications due to the fact that, in general, neither of the aforementioned isomorphisms exist. In the first instance we are forced to replace the isomorphism of \cite{Schaefer-descent} with the {\em fake descent} setup of \cite{PS-descent}. This implies the existence of an exact sequence,
	\begin{equation}\label{eq:fakedescent}
		\frac{\Pic^0C}{2\Pic^0C} \stackrel{x-\alpha}\To \mathfrakL_1 \stackrel{d}{\To} \frac{\HH^1(K,J)[2]}{\langle \Pic^1_C\rangle}\,,
	\end{equation}
	where under suitable hypothesis (e.g. if $\Br K[2] = 0$) the final map is surjective.	
	When $C$ has no $K$-rational divisors of degree $1$ the second isomorphism above must be replaced by an exact sequence,
	\begin{equation}
		\label{seq:h}
		0 \to  \left(\frac{\Br C}{\Br_0 C}\right)[2]\, \stackrel{{h_0}}\to \left(\frac{\HH^1(K,J)}{\langle \Pic^1_C \rangle}\right)[2] \to \HH^3(K,\Kbar^\times)\,.
	\end{equation}
	This means that, even under the assumption that $K$ is $C_1$, the image of the map $d$ in~\eqref{eq:fakedescent} may only correspond to an index $2$ subgroup of $(\Br C/\Br_0 C)[2]$. Our solution to this problem is inspired by \cite{CreutzANTSX} where it is shown how the elements of $\mathfrakL_c \setminus \mathfrakL_1$ correspond to certain $\Pic^1_C$-torsors under $J[2]$. The natural images of these torsors in $\HH^1(K,J)$ lie in the fiber above $\Pic^1_C$ under multiplication by $2$. This allows one to deduce the existence of a complex
	\begin{equation}\label{eq:Pic1descent}
		\frac{\Pic C}{2\Pic C} \stackrel{x-\alpha}{\To} \mathfrakL_c \stackrel{d}\To \left(\frac{\HH^1(K,J)}{\langle \Pic^1_C \rangle}\right)[2] \To 0\,,
	\end{equation}
	 which is compatible with~\eqref{eq:fakedescent}, and is exact when $\Br K[2] = 0$. To prove Theorems~\ref{thm:ComputeH1} and~\ref{thm:ExactIfC1} we must show that these maps are compatible in the sense that ${h_0} \circ \gamma = d$.

	%%%%%%%%%%%%%%%%%%%%%%%%%%%%%%%%%%%%%%%%%%%%%%%%%%%%%%%%%%%%%%%%%%%%%%%%%%%%
	%\subsection*{Outline}%%%%%%%%%%%%%%%%%%%%%%%%%%%%%%
	%%%%%%%%%%%%%%%%%%%%%%%%%%%%%%%%%%%%%%%%%%%%%%%%%%%%%%%%%%%%%%%%%%%%%%%%%%%%
		%In \S\ref{sec:gamma} we compute the residues of an algebra of the form $\gamma'(\ell)$ and use this to show that $\gamma'$ induces a complex as stated in the theorems above. Then in section \S\ref{sec:hofgamma} we define the map ${h_0}$ and compute ${h_0} \circ \gamma'$ in terms of cocycles. This is then related to the cohomological setup for $2$-descents in \S\ref{sec:2descent}. The results of the preceding sections are then utilized in \S\ref{sec:Normc} to prove the theorems above. \S\ref{sec:CTpairing} gives an arithmetic application of these results.

		%%%%%%%%%%%%%%%%%%%%%%%%%%%%%%%%%%%%%%%%%%%%%%%%%%%%%%%%%%%%%%%%%%%%%%%%
		\subsection{Notation}%%%%%%%%%%%%%%%%%%%%%%%%%%%%%%%%%%%%%%%%%%%%%%%%%%
		%%%%%%%%%%%%%%%%%%%%%%%%%%%%%%%%%%%%%%%%%%%%%%%%%%%%%%%%%%%%%%%%%%%%%%%%
			Let $K$ be a field, choose a separable closure $\Kbar$ and let $G_K := \Gal(\Kbar/K)$ be the absolute Galois group. If $M$ is a $G_K$-module (with the discrete topology) and $i \ge 0$, then $\HH^i(K,M) := \HH^i(G_K,M)$ denotes the $i$th Galois cohomology group. Similarly $C^i(K,M)$ and $Z^i(K,M)$ are, respectively, the groups of continuous $i$-cochains and $i$-cocycles. More generally, if $A$ is an \'etale $K$-algebra, then $\HH^i(A,M)$ denotes the \'etale cohomology group $\HH^i_\textup{\'et}(\Spec A,M)$. If $A \simeq \prod K_j$ for field extensions $K_j/K$, Shapiro's lemma shows that $\HH^i(A,M) \simeq \prod\HH^i(K_j,M)$. If $\calG$ is an algebraic group defined over $K$ we define $\HH^i(K,\calG) := \HH^i(K,\calG(\Kbar))$, and analogously for the other groups defined above.
			
			{The Brauer group of a scheme $X$ is the \'etale cohomology group $\Br X := \HH^2_\textup{\'et}(X,\G_m)$; when $X = \Spec R$ is the spectrum of a ring $R$ we define $\Br R := \Br \Spec R$. Given invertible elements $a,b$ in an \'etale $K$-algebra $A$, we define the quaternion algebra 
			\[
				(a,b)_2 := A[i,j]/\langle i^2 = a,\, j^2 = b,\, ij = -ji\rangle.
			\]
			We will often conflate $(a,b)_2$ with its class in $\Br A$ where we write the group law additively. If $X$ and $S$ are $K$-schemes, we set $X_S := X \times_{\Spec K} S$. We also define $\overline{X} := X_\Kbar$ and $X_A := X_{\Spec A}$, for a $K$-algebra $A$ of finite type. If $X$ is an integral $K$-scheme, $\kk(X)$ denotes its function field. More generally, if $X$ is a finite union of integral $K$-schemes $X_i$, then $\kk(X) := \prod \kk(X_i)$ is the ring of global sections of the sheaf of total quotient rings. In particular, if $A \simeq \prod K_j$ is an \'etale $K$-algebra, then $X_A$ is a union of integral $K$-schemes and $\kk(X_A) \simeq \prod\kk(X_{K_j})$.	
			
			Now suppose that $X$ is a smooth, projective and geometrically integral variety over $K$. Let $\Pic X$ be its Picard group and let $\Pic_X$ be its Picard scheme. Then $\Pic X = \Div X/\Princ X$, where $\Div X$ (resp. $\Princ X$) is the group of divisors (resp. principal divisors) of $X$ defined over $K$. If $D \in \Div X$, then $[D]$ denotes its class in $\Pic X$. There is a bijective map $(\Pic \Xbar)^{G_K} \to \Pic_X(K)$, but in general the map $\Pic X \to \Pic_X(K)$ is not surjective. Let $\Pic^0_X \subseteq \Pic_X$ denote the connected component of the identity, and use $\Pic^0X$ to denote the subgroup of $\Pic X$ mapping into $\Pic^0_X(K)$. Then $\NS X := \Pic X/\Pic^0 X$ is the {N\'eron-Severi group} of $X$. If $\lambda \in (\NS \Xbar)^{G_K}$, let $\Pic^\lambda_X$ denote the corresponding component of the Picard scheme and use $\Pic^\lambda X$ and $\Div^\lambda X$ to denote the subsets of $\Pic X$ and $\Div X$ mapping into $\Pic^\lambda_X(K)$.	We write $\Alb_X$ for the Albanese scheme of $X$ and, for $i \in \Z$, write $\Alb^i_X$ for the degree $i$ component of $\Alb_X$. Then $\Alb^i_X$ is a $K$-torsor under the abelian variety $\Alb^0_X$. When $X$ is a curve, $\NS \Xbar = \Z$, $\Pic^i_X = \Alb^i_X$ for all $i \in \Z$ and $\Jac(X) := \Pic^0_X = \Alb^0_X$ is called the {Jacobian} of $X$.
		
				%%%%%%%%%%%%%%%%%%%%%%%%%%%%%%%%%%%%%%%%%%%%%%%%%%%%%%%%%%%%%%%%%%%%%%%%%%%%
	\section{Corestriction, residues and purity}\label{sec:gamma}%%%%%%%%%%%%%%%%%%%%%%%
	%%%%%%%%%%%%%%%%%%%%%%%%%%%%%%%%%%%%%%%%%%%%%%%%%%%%%%%%%%%%%%%%%%%%%%%%%%%%
		Every prime valuation $v$ on $\kk(C)$ induces a residue map,
		\[
			\partial_v : \Br \kk(C) \To \HH^1\left(\kk(C),\Q/\Z\right)\,,  
		\]
		where $\kk(v)$ denotes the residue field associated to $v$ (see \cite[\S 6.4]{GS-csa}). By the purity theorem \cite{Fujiwara-purity} we have
		\[
			\Br C = \bigcap_v \ker\left(  \Br \kk(C) \stackrel{\partial_v}\To \HH^1\left(\kk(C),\Q/\Z\right) \right)\,,
		\]
		the intersection running over all valuations corresponding to prime divisors on $C$. 
		
		Restricting to the $2$-torsion subgroup, identifying $\mu_2$ with $\frac{1}{2}\Z/\Z$, and applying Hilbert's Theorem 90 the residue map corresponding to $v$ gives a map
		\[
			\partial_v^2 : (\Br \kk(C))[2] \To \HH^1\left(\kk(v),\mu_2\right) \simeq \kk(v)^\times/\kk(v)^{\times 2}\,,
		 \]
		 which sends the class represented by a quaternion algebra $(a,b)_2$ to
		 \[
		 	\partial_v^2\left( (a,b)_2 \right) = {\left((-1)^{v(a)v(b)}a^{v(b)} b^{-v(a)}\right)} \in \kk(v)^\times/\kk(v)^{\times 2}
		 \]
		 (see \cite[Example 7.1.5]{GS-csa}).

The following lemma will enable us to compute the residues for a Brauer class of the form $\Cor_{\kk(C_L)/\kk(C)}((\ell,x-\alpha)_2)$.
		
		\begin{lemma}
			\label{lem:CorResidue}
			Suppose $E$ is a field of characteristic prime to $2$ and that $v$ is a discrete valuation on $E$ with residue field $\kk(v)$ of equal characteristic. If $F/E$ is a field of finite degree over $E$, then for any $a,b \in F^\times$ we have
			\begin{equation*}
				\partial_v^2\left( \Cor_{F/E}((a,b)_2)\right) 
				%= \prod_{w \mid v} \Cor_{\kk(w)/\kk(v)}\left(\partial_w((a,b)_2)\right)\\
				= \prod_{w\mid v} \Norm_{\kk(w)/\kk(v)}
						\left((-1)^{w(a)w(b)}a^{w(b)} b^{-w(a)}\right)\,,
			\end{equation*}
			the product running over all discrete valuations of $F$ lying over $v$.
		\end{lemma}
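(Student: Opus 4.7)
The plan is to combine two standard inputs: the explicit tame-symbol formula for $\partial_w^2$ on quaternion classes (recalled in the text immediately before the lemma), and the compatibility of residues with corestriction. Specifically, for a finite field extension $F/E$ and a discrete valuation $v$ on $E$, the residue-corestriction formula (see for instance \cite{GS-csa}*{Prop.~7.4.1}) reads
$$\partial_v \circ \Cor_{F/E} \;=\; \sum_{w\mid v} \Cor_{\kk(w)/\kk(v)} \circ \partial_w,$$
where $w$ runs over the discrete valuations of $F$ extending $v$. This is the only nontrivial external input.

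First I would apply this identity to the class $(a,b)_2 \in \Br F$ and restrict to the $2$-torsion subgroup. Under the Kummer identification $\HH^1(\kk(w),\mu_2) \cong \kk(w)^\times/\kk(w)^{\times 2}$, the corestriction $\Cor_{\kk(w)/\kk(v)}$ corresponds to the norm $\Norm_{\kk(w)/\kk(v)}$; this is the well-known identification of corestriction on $\HH^1(-,\mu_n)$ with the norm for $n$ prime to the characteristic, and is the same identification implicitly used to translate $\partial_v$ into multiplicative form immediately above the lemma. Then, for each $w\mid v$, the tame symbol formula already recorded in the text gives
$$\partial_w^2\bigl((a,b)_2\bigr) = (-1)^{w(a)w(b)}\, a^{w(b)}\, b^{-w(a)} \in \kk(w)^\times/\kk(w)^{\times 2}.$$
Substituting this into the displayed residue-corestriction formula, and rewriting the additive sum multiplicatively (which turns each $\Cor_{\kk(w)/\kk(v)}$ into $\Norm_{\kk(w)/\kk(v)}$), produces precisely the product appearing in the statement.

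The main obstacle is not the calculation itself but checking that the cited residue-corestriction identity is available in the stated generality: a finite, not necessarily separable, extension $F/E$ with residue fields of equal characteristic, different from $2$. This is the classical tame setting and is treated in the standard references; moreover, in the intended application $L$ is an \'etale $K$-algebra, so $F = \kk(C_L)$ is a product of separable extensions of $E = \kk(C)$ and the separability hypothesis of any version of \cite{GS-csa}*{Prop.~7.4.1} one chooses to invoke is automatically satisfied. No further genuinely new computation is needed.
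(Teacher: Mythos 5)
Your computation is the same as the paper's and lands in the right place; the only real issue is the provenance of your single external input. The identity $\partial_v \circ \Cor_{F/E} = \sum_{w\mid v}\Cor_{\kk(w)/\kk(v)}\circ\partial_w$ at the level of Galois cohomology is exactly what the paper does \emph{not} take off the shelf: \cite{GS-csa}*{Prop.~7.4.1} and Cor.~7.4.3 there are statements in Milnor $K$-theory (compatibility of the tame symbols $\partial^M_w$ with the norm maps $N_{F/E}$), not statements about the cohomological residue maps $\partial^2_w$. The paper's proof consists precisely of transferring the $K$-theoretic compatibility to $\HH^2(\cdot,\mu_2^{\otimes 2})$: it assembles a commutative cube whose back face is the $K$-theory square (Cor.~7.4.3), whose top, bottom and side faces express compatibility of the Galois symbols $h^i_{\cdot,2}$ with tame symbols and with norms/corestrictions (Prop.~7.5.1 and 7.5.5), and then deduces commutativity of the front (cohomological) face from the surjectivity of $h^2_{F,2}$, i.e.\ from the Merkurjev--Suslin theorem. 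So your argument is correct modulo replacing the citation by either a genuine reference for the cohomological residue--corestriction compatibility or by this transfer argument; once the front square commutes, the remaining steps you describe (the Kummer identification of $\Cor_{\kk(w)/\kk(v)}$ with $\Norm_{\kk(w)/\kk(v)}$ and the tame-symbol formula for $\partial^2_w\bigl((a,b)_2\bigr)$) are exactly what the paper does. Your closing worry about separability is not the delicate point; the delicate point is that the cohomological compatibility is not formal and, in this proof, genuinely uses the norm-residue isomorphism (or an equivalent input) to pass from symbols to all of $(\Br F)[2]$.
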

		
		\begin{proof}
			Consider the following diagram:
			\begin{equation}
			\label{cube1}
			   \xymatrix {
				    K^M_2(F) 
						\ar[dd]^{\N_{F/E}} 
						\ar[rr]^{\oplus \partial^M_w} 
						\ar[dr]^{h^2_{F,2}} & & 
					\bigoplus_{w|v}K^M_1(\kk(w)) 		
						\ar[dd]|\hole^(.7){\sum_{w|v}\N_{\kk(w)/\kk(v)}} 	
						\ar[dr]^{\oplus h^1_{\kk(w),2}}\\
				    & \HH^2(F,\mu_2^{\otimes2})  
						\ar[rr]^(.3){\oplus \partial^2_w} 
						\ar[dd]^(.7){\Cor_{F/E}} & & 
					{\bigoplus_{w|v}\HH^1(\kk(w), \mu_2)} 
						\ar[dd]^{\sum_{w|v}\Cor_{\kk(w)/\kk(v)}}  \\
				    K^M_2(E)
						\ar[rr]^(.7){\partial^M_v}|\hole 
						\ar[dr]^{h^2_{E,2}} & & K^M_1(\kk(v)) 	
						\ar[dr]^{h^1_{\kk(v),2}}\\
				    & \HH^2(E, \mu_2^{\otimes2}) 
						\ar[rr]^{\partial^2_v}  & & 
					\HH^1(\kk(v), \mu_2) 
				   }
			\end{equation}
			The back, side, top, and bottom squares are all commutative~\cite[Cor. 7.4.3 and Prop. 7.5.1 \& 7.5.5]{GS-csa}.  Therefore, all ways of traversing from $K^M_2(F)$ to $\HH^1(\kk(v), \mu_2)$ are equivalent.  By the Merkurjev-Suslin theorem \cite[Thm. 4.6.6]{GS-csa}, $h^2_{F,2}$ is surjective so the front square is commutative.
			
			Hilbert's Theorem 90 yields identifications $\HH^2(E,\mu_2^{\otimes 2}) \simeq (\Br E)[2]$ and  $\HH^1(\kk(v),\mu_2) \simeq \kk(v)^\times/\kk(v)^{\times 2}$ (and similarly with $F$ and $\kk(w)$) under which the front square of \eqref{cube1} becomes
			\begin{equation}
				\label{diag:purity-cores}
				\xymatrix{
					(\Br F)[2] \ar[rrr]^{\oplus_w \partial^2_w} \ar[d]^{\Cor_{F/E}}
					&&&\bigoplus_{w\mid v} 	\kk(w)^{\times}/\kk(w)^{\times2} \ar[d]^{\Norm_{\kk(w)/\kk(v)}}\\
					(\Br E)[2] \ar[rrr]^{\partial_v^2}
					&&&\kk(v)^{\times}/\kk(v)^{\times2}\,.
				}
			\end{equation}
			The lemma now follows immediately from the explicit formula for the residue map mentioned above.
		\end{proof}

\begin{proof}[Proof of Theorem \ref{thm:gammawelldefined}]
	Let $\ell \in L^\times$. By the purity theorem and Lemma~\ref{lem:CorResidue}, we have that $\gamma'(\ell)$ lies in $\Br C$ if and only if 
				\begin{equation}\label{eqn:unramified}
					\partial_v^{2}(\gamma'(\ell)) = \prod_{w|v} \Norm_{\kk(w)/\kk(v)} \left((-1)^{w(\ell)w(x - \alpha)}
					\ell^{w(x- \alpha)} (x - \alpha)^{-w(\ell)}\right)
				\end{equation}
				is a square in $\kk(v)^{\times}$, for all valuations $v$.  Since $\ell$ is a constant in $\kk(C_L)$, $w(\ell) = 0$ for all valuations $w$. When $C$ is odd, $w(x - \alpha) \equiv 0 \bmod 2$, for all valuations $w$,  so~\eqref{eqn:unramified} is clearly a square. Hence we may assume that $C$ is even. Furthermore we can restrict our attention to valuations $v$ such that there exists a $w\mid v$ with $w(x - \alpha)\neq 0$. 
				
				For all valuations $w$ such that $w(x - \alpha)$ is positive, we have that $w(x - \alpha) \equiv 0 \bmod 2 $ so~\eqref{eqn:unramified} is clearly a square.  Thus we may consider the valuations $v$ for which there exists a $w\mid v$ with $w(x - \alpha) <0$.  Such valuations $v$ correspond to the points at infinity on $C$, and, for every $w\mid v$, we have that $w(x - \alpha) = -1$.  In this case~\eqref{eqn:unramified} can be simplified to
				\[
					\partial_v^{2}(\gamma'(\ell)) 
					= \prod_{w|v} \Norm_{\kk(w)/\kk(v)} \left(\ell^{-1}\right)
					= \Norm_{L/K}(\ell^{-1}).
				\]
				This shows that $\gamma'(\ell) \in \Br C$ if and only if $\Norm_{L/K}(\ell) \in \kk(v)^{\times 2}$. But $\kk(v) = K(\sqrt{c})$, so this is equivalent to requiring that $\overline{\ell} \in \mathfrakL_c$. This completes the proof.
			\end{proof}

			\begin{lemma}
				$\gamma'$ induces a {homomorphism} $\gamma\colon \mathfrakL \to \Br \kk(C)/\Br_0 C$ such that the natural square commutes.
			\end{lemma}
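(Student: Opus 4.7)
The plan is to verify three separate statements which together imply both the existence of the induced map $\gamma$ and the commutativity of the natural square
\[
\xymatrix{ L^\times \ar[r]^-{\gamma'} \ar[d] & \Br \kk(C) \ar[d] \\ \mathfrakL \ar[r]^-{\gamma} & \Br \kk(C)/\Br_0 C \, ,}
\]
in which the vertical maps are the canonical quotients. Namely, I will check that $\gamma'$ is a homomorphism, that it vanishes on $L^{\times 2}$, and that $\gamma'(K^\times) \subseteq \Br_0 C$. Granted these three facts, $\gamma'$ descends through $L^\times/(K^\times L^{\times 2}) = \mathfrakL$ to give a map $\gamma$ making the square commute by construction.

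The first two facts are essentially formal. Bilinearity of the symbol $(\cdot,\cdot)_2$ in its first entry together with additivity of corestriction yields
\[
\gamma'(\ell_1\ell_2) = \Cor_{\kk(C_L)/\kk(C)}\bigl((\ell_1,x-\alpha)_2+(\ell_2,x-\alpha)_2\bigr) = \gamma'(\ell_1)+\gamma'(\ell_2),
\]
so $\gamma'$ is a homomorphism, and if $\ell=m^2$ then $(m^2,x-\alpha)_2 = 0$ already in $\Br\kk(C_L)$, so $\gamma'(m^2)=0$.

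The main content lies in the third step. For $a\in K^\times\subseteq \kk(C)^\times$, the projection formula for corestriction applied to the cup product $a\cup(x-\alpha)$ gives
\[
\gamma'(a) = \Cor_{\kk(C_L)/\kk(C)}\bigl((a,x-\alpha)_2\bigr) = \bigl(a,\ \Norm_{\kk(C_L)/\kk(C)}(x-\alpha)\bigr)_2.
\]
In the even case, $L=K[\theta]/f(\theta)$ and $x-\alpha$ is the image of $x-\theta$, whose norm from $L[x]$ to $K[x]$ is $f(x)$. In the odd case, the decomposition $L\cong L_\circ\times K$ induces $\kk(C_L)\cong \kk(C_{L_\circ})\times \kk(C)$, with $x-\alpha$ corresponding to $(x-\theta,1)$; the second factor contributes a trivial symbol, while the first again contributes $f(x)$. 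In either case, the defining equation $y^2 = cf(x)$ of $C$ shows that $f(x)\equiv c \pmod{\kk(C)^{\times 2}}$, so
\[
\gamma'(a) = (a,c)_2 \in \Br K \subseteq \Br_0 C,
\]
with $c=1$ in the odd case making this trivial. Thus $\gamma'(K^\times)\subseteq \Br_0 C$.

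The only real obstacle is correctly computing the norm $\Norm_{\kk(C_L)/\kk(C)}(x-\alpha)$ in the odd case, where one must take care with the splitting $L\cong L_\circ\times K$ dictated by the convention that $x-\alpha$ takes the value $1$ at the Weierstrass point above $\infty$; this done, the rest is bookkeeping. No other subtleties arise, and the commutativity of the square is then automatic.
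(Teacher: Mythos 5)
Your proposal is correct and follows essentially the same route as the paper: the paper likewise notes that $\gamma'$ is clearly a homomorphism and then computes $\gamma'(a\ell^2) = (a,\Norm_{\kk(C_L)/\kk(C)}(x-\alpha))_2 = (a,f(x))_2 = (a,y^2/c)_2 = (a,c)_2 \in \Br_0 C$ via the projection formula, merely packaging your second and third steps into a single calculation. Your extra care with the splitting $L \cong L_\circ \times K$ in the odd case is a harmless elaboration of a point the paper leaves implicit.
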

			
			\begin{proof}
				{The map $\gamma'$ is clearly a homomorphism.  It remains to show that $\gamma'$ sends $K^\times L^{\times2}$ into $\Br_0 C.$}
				Let $a \in K^\times$, $\ell \in L^{\times}$; we may expand $\gamma'(a\ell^2)$ as follows
				\begin{align*}
					\gamma'(a\ell^2)& = 
						\Cor_{\kk(C_L)/\kk(C)}\left( (a, x - \alpha)_2\right) +
						\Cor_{\kk(C_L)/\kk(C)}\left( (\ell^2, x - \alpha)_2\right)\\
					& =  (a, \Norm_{\kk(C_L)/\kk(C)}(x - \alpha))_2
					 =  (a, f(x))_2
					 =  (a, y^2/c)_2
					 = (a,c)_2\,.
				\end{align*}
				This completes the proof since $(a,c)_2 \in \Br_0 C$.
			\end{proof}
			
			Theorem~\ref{thm:MainComplex} is a corollary of the following proposition.
			
			\begin{prop}
				\label{prop:complex}
				If $P\in C \setminus\left(\Omega\cup \pi^{-1}(\infty)\right)$ is a closed point, then $\gamma'(({x(P)} - \alpha)) \in \Br_0 C$.
			\end{prop}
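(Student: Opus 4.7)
The plan is to reduce via corestriction to the case that $P$ is $K$-rational, then manipulate the symbol $\{x(P)-\alpha,\, x-\alpha\}$ in $K_2(\kk(C_L))/2$ using the Steinberg relation so that one factor becomes an element of $\kk(C)$, and finally apply the projection formula for $\Cor_{\kk(C_L)/\kk(C)}$ to collapse the result to a constant algebra.

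For the reduction, I let $K(P)$ denote the residue field of $P$, let $\tilde P \in C_{K(P)}(K(P))$ be the canonical $K(P)$-rational point lying over $P$, and set $L(P) := L \otimes_K K(P)$. Comparing the two factorizations of $\kk(C) \hookrightarrow \kk(C_{L(P)})$ through $\kk(C_L)$ and through $\kk(C_{K(P)})$, and invoking transitivity of corestriction together with the projection formula, I obtain
\[
	\gamma'\bigl(x(P)-\alpha\bigr) \;=\; \Cor_{\kk(C_{K(P)})/\kk(C)}\bigl(\gamma'_{K(P)}(x(\tilde P) - \alpha)\bigr),
\]
where $\gamma'_{K(P)}$ is the analogous homomorphism over $K(P)$. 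Since classes pulled back from $\Br K(P)$ have corestriction in $\Br K$ and hence lie in $\Br_0 C$, it suffices to treat a $K$-rational point $P = (x_0, y_0)$.

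In that case I apply the Steinberg relation $\{u,\, 1-u\} = 0$ in $K_2(\kk(C_L))$ with $u = (x-\alpha)/(x-x_0)$, so that $1-u = -(x_0-\alpha)/(x-x_0)$. Expanding bilinearly and using $\{a,a\} = \{a,-1\}$ modulo $2$ yields
\[
	\{x_0-\alpha,\, x-\alpha\} \;=\; \{x-\alpha,\, x_0-x\} \;+\; \{x-x_0,\, x_0-\alpha\} \quad \text{in } K_2(\kk(C_L))/2.
\]
Applying $\Cor_{\kk(C_L)/\kk(C)}$ and the projection formula (valid since $x - x_0 \in \kk(C)^\times$ sits in one slot of each symbol), the identities $\Norm_{\kk(C_L)/\kk(C)}(x - \alpha) = f(x) \equiv c \pmod{\kk(C)^{\times 2}}$ (from $cf(x) = y^2$) and $\Norm_{L/K}(x_0 - \alpha) = f(x_0) \equiv c \pmod{K^{\times 2}}$ (from $cf(x_0) = y_0^2$, using $y_0 \ne 0$ since $P \notin \Omega$) replace both terms by symbols involving only $c$ and $x - x_0$. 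Their sum telescopes modulo $2$ to $\{c,-1\}$, whence $\gamma'(x_0-\alpha) = (c,-1)_2 \in \Br K \subseteq \Br_0 C$.

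The main obstacle is the sign bookkeeping in the Milnor K-theory expansion: one must verify that the two occurrences of $\{c,\, x-x_0\}$ produced by the projection formula cancel modulo $2$, leaving only the residual constant symbol $\{c,-1\}$. In the odd case the same computation, run on the $L_\circ$-component with $c = 1$, yields the trivial algebra, consistent with $\gamma'$ factoring through $L_\circ^\times$ in that setting.
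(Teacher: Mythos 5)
Your argument is correct, but it takes a genuinely different route from the paper's. The paper observes that $\gamma'(x(P)-\alpha)=\Cor_{L(x)/K(x)}\bigl(((-1)^{\deg P}p(\alpha),x-\alpha)_2\bigr)$ already lives in $\Br K(x)$, computes its residues at all places of $\PP^1_K$, matches them against the auxiliary algebra $(cf(x),(-1)^{\deg P}p(x))_2$ (where $p$ is the minimal polynomial of $x(P)$), invokes $\Br \PP^1_K=\Br K$ to conclude the two differ by a constant, and then kills the auxiliary algebra in $\Br\kk(C)$ using $cf(x)=y^2$; this treats closed points of arbitrary degree in one stroke and reuses the residue machinery of Lemma~\ref{lem:CorResidue}. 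You instead first reduce to a rational point via transitivity of corestriction and the projection formula (using $x(P)-\alpha=\Norm_{L\otimes_K K(P)/L}(x(\tilde P)-\alpha)$ and the fact that corestriction of a constant class is constant), and then run a purely symbolic computation: the Steinberg relation with $u=(x-\alpha)/(x-x_0)$, the projection formula in each slot, and the square identities $f(x)\equiv c$ in $\kk(C)^\times/\kk(C)^{\times2}$ and $f(x_0)\equiv c$ in $K^\times/K^{\times2}$. I checked the sign bookkeeping you flag as the main obstacle: the expansion gives $\{x_0-\alpha,x-\alpha\}=\{x-\alpha,x_0-x\}+\{x-x_0,x_0-\alpha\}$ modulo $2$ exactly as claimed, and after corestriction the two copies of $\{c,x-x_0\}$ do cancel, leaving $\{c,-1\}$. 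Your approach avoids residues and the Faddeev-type input $\Br\PP^1_K=\Br K$ entirely, and as a bonus identifies the constant class explicitly as $\Cor_{K(P)/K}(-1,c)_2=((-1)^{\deg P},c)_2$, which the paper's proof leaves implicit; the paper's approach avoids the reduction step and the (mild) separability bookkeeping for residue fields of closed points that your norm argument implicitly relies on.
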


			\begin{proof}[Proof of Theorem~\ref{thm:MainComplex}]
				$[D] \in \Pic C$ is represented by a linear combination $\sum_{P}{n_P}P$ of points $P$ as in the proposition (see \cite[\S 5]{PS-descent}), and, by definition, $(x-\alpha)([D])$ is the class of $\prod (x(P)-\alpha)^{n_P}$ in $\mathfrakL$.
			\end{proof}
			
			\begin{proof}[Proof of Proposition~\ref{prop:complex}]
				
				Let ${p(x)}\in K[x]$ be the minimal polynomial of the $x$-coordinate of $P$. If $P$ is the pullback of a point from $\PP^1$, then $(x{(P)} - \alpha)\in L^{\times2}$ and so $\gamma'(x{(P)} - \alpha) = 0.$  Assume otherwise; then $\gamma'(x{(P)} - \alpha) 
					= \Cor_{L(x)/K(x)}\left(({(-1)^{{\deg(P)}}}{p(\alpha)}, x - \alpha)_2\right)\,;$ we note this element is in $\Br K(x) = \Br \kk(\PP^1_K)$. We will show that the algebras $\gamma'(x{(P)} - \alpha)$ and $\calA := (cf(x), {(-1)^{{\deg(P)}}}{p(x)})_2$ have the same residue at all points of $\PP^1_K$.  Since $\Br \PP^1_K = \Br K$, this shows that $\gamma'(x(P) - \alpha)$ and $\calA$ differ by a constant algebra. To complete the proof, we note that $\calA\in\ker( \Br \kk(\PP^1_K) \to \Br \kk(C))$.
				
				Considered as an element of $\Br \kk(\PP^1)$, the algebra $\gamma'(x(P) - \alpha)$ has trivial residue away from the $\infty$ and the roots of $f(x)$. The residue at $\infty$ is $\Norm_{L/K}({(-1)^{{\deg(P)}}}{p(\alpha)})$ which is equal to $c^{\deg(P)}$ in $K^{\times}/K^{\times2}.$
				
				Now we compute the residues at the roots of $f(x)$. Let $f_Q$ be an irreducible factor of $f$ corresponding to a root $Q$ of $f(x)$, and let $\beta$ be the image of $\theta$ in $\kk(Q) = K[\theta]/f_Q(\theta)$. There is a unique valuation on $\kk(Q)(x) \subseteq L(x)$ lying above $Q$ such that $({(-1)^{{\deg(P)}}}{p(\alpha)}, x - \alpha)_2$ has nontrivial residue, namely the valuation corresponding to the point $Q' = (\alpha:1)$. Furthermore, the norm map $\kk(Q') \to \kk(Q)$ is an isomorphism which sends $\alpha$ to $\beta$.  Therefore, using Lemma~\ref{lem:CorResidue}, we see that the residue at $Q$ is ${(-1)^{{\deg(P)}}}{p(\beta)}$.
				
				{Now we consider the algebra $\calA$; it has trivial residue away from $P$, $\infty$ and the zeros of $f(x)$. The residue at $P$ is equal to $cf(x(P))$, which is a square, the residue at $\infty$ is $(-1)^{\deg(P)\deg(f)}(cf(\infty))^{\deg(P)}({(-1)^{{\deg(P)}}}{p(\infty)})^{-\deg(f)} = c^{\deg(P)}$, and the residue at a zero $Q$ of $f(x)$ is ${(-1)^{{\deg(P)}}}{p(\beta)}$.  Therefore, the residues of $\calA$ and $\gamma'(x(P) - \alpha)$ are equal.}
			\end{proof}
	
		%%%%%%%%%%%%%%%%%%%%%%%%%%%%%%%%%%%%%%%%%%%%%%%%%%%%%%%%%%%%%%%%%%%%%%%%
		\subsection{Corestriction as a tensor product of quaternion algebras}%%%%%%%%%%%%%%
		%%%%%%%%%%%%%%%%%%%%%%%%%%%%%%%%%%%%%%%%%%%%%%%%%%%%%%%%%%%%%%%%%%%%%%%%
			
			{Using Rosset-Tate reciprocity, one can write the corestriction of a quaternion algebra over an extension as a sum of quaternion algebras over the base field. This is described in \cite[Corollary 7.4.10 and Remark 7.4.12]{GS-csa}. In our situation this allows us to write $\gamma'(\ell)$ as a sum of quaternion algebras over $K(x)$. We caution the reader that the $f$ and $g$ appearing in the proposition below are not to be confused with the $f$ and $g$ of \cite[Corollary 7.4.10]{GS-csa}}.
			
			\begin{prop}\label{prop:RT}
				Suppose $\ell \in L^\times \setminus K^\times$ and let $g(x) \in K[x]$ be the minimal degree polynomial such that $g(\alpha) = \ell$. Set $r_0 = f(x)$, $r_1 = g(x)$, and for $i \ge 0$ define $r_{i+2}$ to be the unique polynomial of degree less than $\deg(r_{i+1})$ such that $r_{i+2} \equiv r_{i} \bmod{r_{i+1}}$. Then
				\[
					\Cor_{\kk(C_L)/\kk(C)}\left((\ell,x-\alpha)_2\right) = {\left(\sum_{i = 0}^n (r_{i+1},r_{i})_2\right) + \left(\sum_{i = 0}^n (a_{i+1},a_{i})_2\right)\,,}
				\]
				where $a_i$ is the leading coefficient of $r_i$ and $n$ is the first integer such that $r_{n+2} = 0$.			
			\end{prop}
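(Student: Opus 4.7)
The plan is to apply Rosset-Tate reciprocity in the form of Corollary~7.4.10 and Remark~7.4.12 of~\cite{GS-csa} to the extension $L(x)/K(x)$. Since corestriction commutes with the flat base change $\kk(C)/K(x)$ (and $\kk(C_L) = \kk(C) \otimes_{K(x)} L(x)$, as $cf(x)$ is not a square in $L(x)$), one has
\[
\Cor_{\kk(C_L)/\kk(C)}\bigl((\ell, x-\alpha)_2\bigr) \;=\; \Cor_{L(x)/K(x)}\bigl((\ell, x-\alpha)_2\bigr)\bigr|_{\Br \kk(C)},
\]
reducing the problem to establishing the corresponding identity in $\Br K(x)$.

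To put the symbol $(\ell, x-\alpha)_2$ into the form required by Rosset-Tate, I would change the primitive element of $L(x)/K(x)$ from $\alpha$ to $\beta := x-\alpha$. Then $L(x) = K(x)(\beta)$ with minimal polynomial $\tilde{f}(T) := f(x-T) \in K(x)[T]$, and $\ell = g(\alpha) = \tilde{g}(\beta)$ where $\tilde{g}(T) := g(x-T)$. The algebra becomes $(\tilde{g}(\beta), \beta)_2$, and Rosset-Tate expresses its corestriction to $\Br K(x)$ as a sum of quaternion algebras determined by the Euclidean algorithm on $\tilde{r}_0 = \tilde{f}$, $\tilde{r}_1 = \tilde{g}$ in $K(x)[T]$ and the leading coefficients $\tilde{a}_i$.

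The key identification is that the $K(x)$-algebra involution $T \mapsto x-T$ of $K(x)[T]$ preserves polynomial degrees and intertwines the two Euclidean algorithms: inductively $\tilde{r}_i(T) = r_i(x-T)$, and hence the leading coefficients satisfy $\tilde{a}_i = (-1)^{\deg r_i} a_i$. Substituting these relations into the Rosset-Tate formula and using the Steinberg relation $(u, -u)_2 = 0$ to absorb the sign factors produces the claimed identity $\sum_{i=0}^{n}(r_{i+1}, r_i)_2 + \sum_{i=0}^{n}(a_{i+1}, a_i)_2$. Termination of the algorithm at $n+1$ nonzero steps follows because $g(\alpha) = \ell \ne 0$ and the irreducibility of $f$ force $\gcd(f, g) = 1$.

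The hard part will be the bookkeeping of the degree-dependent signs that arise in Rosset-Tate reciprocity: there are $(-1)^{\deg r_i \deg r_{i+1}}$-type factors in the symbol identities and $(-1)^{v(a)v(b)}$ factors appearing through the explicit residue formula. One must verify that the signs introduced by the substitution $T \mapsto x-T$, combined with applications of the Steinberg relation, cancel precisely against those in the Rosset-Tate formula so that the final answer takes the clean form stated in the proposition.
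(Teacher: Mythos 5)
Your proposal follows essentially the same route as the paper's proof: reduce to $\Cor_{L(x)/K(x)}$, change the primitive element so that $x-\alpha$ generates $L(x)$ over $K(x)$, identify the corestriction with a Rosset--Tate symbol, and telescope via the reciprocity law, with the Euclidean algorithm on the substituted polynomials mirroring the one on $r_0=f$, $r_1=g$ and the constant terms recovering $r_i(x)$. The only notable difference is that the paper substitutes $y\mapsto x+y$ (setting $R_i(y)=r_i(x+y)$, which preserves the leading coefficients $a_i$ on the nose and so sidesteps the $(-1)^{\deg r_i}$ sign bookkeeping that your substitution $T\mapsto x-T$ forces you to carry through).
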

			
			\begin{cor}\label{cor:RT}
				Modulo constant algebras, $\gamma'(\ell)$ may be written as a sum of $g(C) + 1$ quaternion algebras over $K(x)$.
			\end{cor}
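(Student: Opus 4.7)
The plan is to apply Proposition~\ref{prop:RT} directly, discard the constant quaternion algebras from the resulting expression, and then bound the remaining count by $g(C)+1$ after replacing $\ell$ by a small representative of its class in $\mathfrakL$.

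Proposition~\ref{prop:RT} writes $\gamma'(\ell) = \sum_{i=0}^n (r_{i+1},r_i)_2 + \sum_{i=0}^n (a_{i+1},a_i)_2$ in $\Br K(x)$. Each symbol in the second sum has both entries in $K^\times$, so it lies in the image of $\Br K \to \Br K(x)$; the second sum is therefore a constant algebra and may be discarded modulo constants. This leaves the first sum, which is a sum of $n+1$ quaternion algebras over $K(x)$. By construction $\deg(r_{i+2}) < \deg(r_{i+1})$ whenever $r_{i+2} \ne 0$, so the degrees $\deg(r_1) > \deg(r_2) > \cdots > \deg(r_{n+1}) \ge 0$ form a strictly decreasing chain of non-negative integers, yielding $n \le \deg(r_1) = \deg(g)$ and at most $\deg(g)+1$ nonconstant terms.

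To obtain the sharper bound $\deg(g) \le g(C)$, I replace $\ell$ by a class-equivalent element modulo $K^\times L^{\times 2}$ and the image of $x-\alpha$. Both operations change $\gamma'(\ell)$ only by a constant algebra: the first by the definition of $\gamma$, the second by Proposition~\ref{prop:complex}. A Mumford-style reduction for divisor classes on the hyperelliptic curve $C$ then produces a representative with minimal polynomial of degree at most $g(C)$; given $\ell$ with $\deg(g) > g(C)$, one multiplies $\ell$ by $(x(P)-\alpha)^{\pm 1}$ for an appropriate closed point $P$, or by a factor coming from a principal divisor $\divv(y-h(x))$, to lower $\deg(g)$, and iterates. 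Combining gives $n+1 \le \deg(g)+1 \le g(C)+1$.

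The main obstacle is this final reduction: making Mumford-type reduction precise while respecting three layers of quotient simultaneously ($K^\times$, $L^{\times 2}$, and the image of $x-\alpha$), and keeping track of the case distinction between odd and even $\pi$ (where the point at infinity plays different roles in $L$ and in $\Pic C$). The Euclidean count from Proposition~\ref{prop:RT} is essentially automatic once the constant part is isolated; the whole content of the corollary lies in achieving a genus-sized representative.
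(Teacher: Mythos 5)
Your first two observations are fine: the sum $\sum_i (a_{i+1},a_i)_2$ is constant, and the strictly decreasing degrees give $n \le \deg(g)$, hence at most $\deg(g)+1$ nonconstant symbols. But $\deg(g)$ can be as large as $\deg(f)-1 = 2g(C)+1$, since $L \cong K[\theta]/f(\theta)$ and the minimal-degree polynomial representing a given $\ell$ generically has degree $\deg(f)-1$; so this count only yields $2g(C)+2$, twice the claimed bound. The degree-reduction step you invoke to close the factor of two is a genuine gap, and I do not believe it can be repaired. Mumford-style reduction produces small-degree representatives only for elements of $\im(x-\alpha)$, i.e.\ for products $\prod_P (x(P)-\alpha)^{n_P}$ attached to divisor classes --- and by Proposition~\ref{prop:complex} those are precisely the elements on which $\gamma'$ is already constant. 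A general $\ell \in L^\times$ does not arise from a divisor class, and there is no reason its class modulo $K^\times L^{\times 2}$ and $\im(x-\alpha)$ should admit a representative $g(\alpha)$ with $\deg(g) \le g(C)$; indeed the content of the paper is that the cokernel of $x-\alpha$ in $\mathfrak{L}_c$ is large. Your sketch (``multiply by $(x(P)-\alpha)^{\pm 1}$ for an appropriate closed point $P$'') never explains why such a $P$ exists, and you yourself flag this as the main obstacle.

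The paper closes the gap by an entirely different and much cheaper device: since the quaternion symbol is symmetric and bilinear modulo $2$, consecutive terms of the Rosset--Tate expansion merge pairwise, $(r_{i+1},r_i)_2 + (r_{i+2},r_{i+1})_2 = (r_{i+1}, r_i r_{i+2})_2$, so the $n+1$ nonconstant symbols collapse to at most $\lceil (n+1)/2 \rceil$ quaternion algebras; combined with $n \le \deg(g) \le 2g(C)+1$ this gives $g(C)+1$ with no hypothesis whatsoever on the representative $\ell$. You should adopt that pairing; the reduction of $\ell$ to a genus-sized representative is both unproven and unnecessary.
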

						
			\begin{proof}
				The proposition shows that, modulo constant algebras,
				\[
					\gamma'(\ell) 
					{=\underbrace{(r_1,r_0)_2 + (r_2,r_1)_2}_{= (r_1,r_0r_2)_2 }
					+ \cdots + 
					\underbrace{(r_{n},r_{n-1})_2 + (r_{n+1},r_{n})_2}_{=(r_n,r_{n-1}r_{n+1})_2}}
				\]
				is a sum of $\left\lceil n/2 \right\rceil$ quaternion algebras over $K(x)$. On the other hand, the $r_i$ are the remainders obtained by applying the Euclidean algorithm to $f(x)$ and $g(x)$, so $n \le \deg(f(x)) \le 2(g(C)+1)$.
			\end{proof}
			
			\begin{example}
				\label{ex:RLcurve}
				Suppose $C : y^2 = 2(x^4-17)$. Then $\ell := (-\alpha^2-4) \in L^\times$ has norm $1$, so $\overline\ell \in \frak{L}_1$. In the notation of Proposition~\ref{prop:RT} we have
				\[
					\underbrace{(x^4-17)}_{f(x) = r_0} = \underbrace{(-x^2-4)}_{g(x)=r_1}(-x^2+4) + \underbrace{(-1)}_{r_2}
				\]
				Hence,
				\begin{align*}
					\gamma'(\ell) 
					&= (-x^2-4,x^4-17)_2 + (-1,-x^2-4)_2\\
					&= (-x^2-4,2)_2 + (-1,-x^2-4)_2\\
					&= (-x^2-4,-2)_2\,,
				\end{align*}
				as in the introduction.
			\end{example}
			
			\begin{proof}[Proof of Proposition~\ref{prop:RT}]
				For $i \ge 0$, let $R_i(y) = r_i(x+y)$, considered as an element in the Euclidean ring $K(x)[y]$. Then, for all $i\ge 0$, the leading coefficient of $R_i(y)$ is $a_i$, and
				\[
					R_{i+2}(y) \equiv R_{i}(y) \bmod {R_{i+1}(y)}\,.
				\]
				Moreover, $R_0(-x+\alpha) = f(\alpha) = 0$ and $R_1(-x+\alpha) = g(\alpha) = \ell$, and $R_i$ and $R_j$ are relatively prime for all $i,j$ since $\ell\in L^{\times}$.  In particular, $r_{n+1}$ and $R_{n+1}$ are nonzero constants. So by~\cite[Lemma 7.4.6 and Proposition 7.5.5]{GS-csa},
				\[
					\Cor_{L(x)/K(x)}\left((\ell,x-\alpha)_2\right) =
					(R_1(y)|R_0(y))_\textsc{rt}\,,
				\]
				where $(\cdot|\cdot)_\textsc{rt}$ denotes the Rosset-Tate symbol. For any $i \ge 0$, the Rosset-Tate reciprocity law \cite[Theorem 7.4.9]{GS-csa} and the Merkurjev-Suslin theorem \cite[Theorem 4.6.6]{GS-csa} give
				\begin{align*}
					(R_{i+1}(y)|R_{i}(y))_\textsc{rt} 
					&= (R_{i+2}(y)|R_{i+1}(y))_\textsc{rt} + (R_{i+1}(0),R_{i}(0))_2 + (a_{i+1},a_{i})_2\\
					&= (R_{i+2}(y)|R_{i+1}(y))_\textsc{rt} + (r_{i+1},r_{i})_2 + (a_{i+1},a_{i})_2\,.
				\end{align*}
				From this the result easily follows by induction.
			\end{proof}
			
		%%%%%%%%%%%%%%%%%%%%%%%%%%%%%%%%%%%%%%%%%%%%%%%%%%%%%%%%%%%%%%%%%%%%%%%%
		\subsection{When $C$ is odd}%%%%%%%%%%%%%%%%%%%%%%%%%%%%%%%%%%%%%%%%%%%%
		%%%%%%%%%%%%%%%%%%%%%%%%%%%%%%%%%%%%%%%%%%%%%%%%%%%%%%%%%%%%%%%%%%%%%%%%
		
			\begin{lemma}
				\label{lem:Br^0}
				Suppose that $C$ is odd. Then, for every $\ell \in L^\times$, $\gamma'(\ell)$ evaluates to $0$ at the point $\infty_C \in C(K)$ above $\infty \in \PP^1(K)$.
			\end{lemma}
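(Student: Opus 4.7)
The plan is to exploit the product decomposition $L \cong L_\circ \times K$ available in the odd case, then pass to the completion of $\kk(C)$ at $\infty_C$ and carry out a direct specialization. Write $\ell = (\ell_\circ, c)$ under this isomorphism; since $x - \alpha$ corresponds to $(x - \theta, 1)$, we get $(\ell, x - \alpha)_2 = ((\ell_\circ, x-\theta)_2,\, (c, 1)_2) = ((\ell_\circ, x-\theta)_2,\, 0)$ in $\Br\kk(C_L) \cong \Br\kk(C_{L_\circ}) \times \Br\kk(C)$. Corestriction respects this product, so
\[
\gamma'(\ell) = \Cor_{\kk(C_{L_\circ})/\kk(C)}\bigl((\ell_\circ, x-\theta)_2\bigr),
\]
and it suffices to show this class evaluates to $0$ at $\infty_C$. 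Theorem~\ref{thm:gammawelldefined} guarantees $\gamma'(\ell) \in \Br C$, so $\eval_{\infty_C}(\gamma'(\ell))$ makes sense.

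Next I pass to completions. Since $L_\circ/K$ is \'etale and $\infty_C$ is $K$-rational, the fiber of the \'etale map $C_{L_\circ} \to C$ over $\infty_C$ is $\Spec L_\circ$; hence if $t$ is a uniformizer of $\OO_{C,\infty_C}$, the completion of $\kk(C_{L_\circ})$ at the corresponding places is $L_\circ \otimes_K K((t)) = L_\circ((t))$. Take the standard uniformizer $t = y/x^{g+1}$, where $g$ is the genus so that $\deg f = 2g+1$. The relation $y^2 = f(x)$ gives $t^2 = f(x)/x^{2g+2} = x^{-1}(1 + O(x^{-1}))$, which inverts to $x = t^{-2} + O(1)$ in $K[[t]]$. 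Therefore
\[
x - \theta = t^{-2}\cdot u, \qquad u \in L_\circ[[t]]^\times,\quad u(0) = 1.
\]
Since $(\ell_\circ, t^{-2})_2$ is trivial, $(\ell_\circ, x - \theta)_2 = (\ell_\circ, u)_2$ in $\Br L_\circ((t))$. The latter class lies in $\Br L_\circ[[t]]$ because both $\ell_\circ$ and $u$ are units there, and it reduces modulo $t$ to $(\ell_\circ, 1)_2 = 0 \in \Br L_\circ$.

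To conclude I invoke compatibility of corestriction with flat base change along $\kk(C) \to K((t))$: because $L_\circ \otimes_K K((t)) = L_\circ((t))$, the image of $\gamma'(\ell)$ in $\Br K((t))$ equals $\Cor_{L_\circ((t))/K((t))}\bigl((\ell_\circ, u)_2\bigr)$. This class lies in $\Br K[[t]]$ and its reduction modulo $t$ is $\Cor_{L_\circ/K}(0) = 0$. Since $\eval_{\infty_C}$ factors as restriction to $\Br K((t))$ followed by reduction modulo $t$, we obtain $\eval_{\infty_C}(\gamma'(\ell)) = 0$.

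The main point requiring care is the identity $u(0) = 1$, which depends crucially on the monic normalization of $f$ that is built into the odd-case hypothesis (where one may take $c = 1$). The remaining ingredients---identification of the fiber of $C_{L_\circ} \to C$ over $\infty_C$, flat base change compatibility of corestriction, and the factorization of $\eval_{\infty_C}$ through the completion---are standard, and they go through componentwise when $L_\circ$ is only a product of fields rather than a single field.
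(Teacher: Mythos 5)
Your proof is correct, and at bottom it runs on the same engine as the paper's: the point in both cases is that, modulo squares, $x-\alpha$ agrees with a function that is regular at $\infty_C$ with value $1$, which is exactly where the monicity of $f$ and the oddness of $\deg f$ enter. The paper realizes this with a single global identity --- $(x-\alpha)$ and $(x-\alpha)^{\deg f}/y^2$ differ by the square $\left((x-\alpha)^{(\deg f-1)/2}/y\right)^2$, and the latter function takes the value $1$ at $\infty_C$ --- and then simply evaluates; this is a two-line argument. You instead pass to the completion $K((t))$ with $t=y/x^{g+1}$ and extract the square $t^{-2}$ from $x-\theta$, which is the local shadow of the same manipulation. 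What your version costs is the extra scaffolding (identifying $L_\circ\otimes_K K((t))$ with $L_\circ((t))$, compatibility of $\Cor$ with base change to the completion, factoring $\eval_{\infty_C}$ through $\Br K[[t]]$); what it buys is that the step the paper leaves implicit --- why triviality of $(\ell,x-\alpha)_2$ at the fiber over $\infty_C$ forces triviality of the corestriction there --- is spelled out completely, and correctly handled componentwise when $L_\circ$ is a product of fields. Your reduction to the $L_\circ$-component at the outset is harmless (the $K$-component contributes $(\cdot,1)_2=0$), though note your use of the letter $c$ for the $K$-component of $\ell$ clashes with the paper's $c$, which in the odd case has been normalized to $1$.
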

			
			\begin{proof}
				Since $\deg f(x)$ is odd, the functions $(x-\alpha)$ and $\frac{(x-\alpha)^{\deg f(x)}}{y^2}$ represent the same class in $\kk(C_L)^\times/\kk(C_L)^{\times 2}$. The latter evaluates to $1$ at $\infty_C$, from which it follows that $\gamma'(\ell)$ is trivial at $\infty_C$.
			\end{proof}
			
			\begin{remark}
				\label{rem:NeedHypInf}
				For this lemma it is not enough to assume the existence of a rational ramification point; one must in fact have an odd double cover. For example, suppose $C$ is defined by $y^2 = x(x-a_1)(x-a_2)(x-a_3)$ with $a_i \in K^\times$. Then $\mathfrakL_1 \simeq (K^\times/K^{\times 2})\times (K^\times/K^{\times 2})$ and $\gamma'$ sends $(k_1,k_2) \in K^\times\times K^\times$ to {$(k_1,(x - a_1)(x- a_3))_2 + (k_2, (x - a_2)(x - a_3))_2$}. Evaluating at the ramification point $\omega = (0,0)$ we have the algebra {$(k_1,a_1a_3)_2 + (k_2,a_2a_3)_2$}. The only conditions these must satisfy are $k_i,a_i \in K^\times$ and {that the $a_i$ are distinct}.  {Over say, $K=\Q$, one can easily find $k_i,a_i$ for which this algebra is nontrivial.}
			\end{remark}

	%%%%%%%%%%%%%%%%%%%%%%%%%%%%%%%%%%%%%%%%%%%%%%%%%%%%%%%%%%%%%%%%%%%%%%%%%%%%
	\section{Computation of cocycles}\label{sec:hofgamma}%%%%%%%%%%%%%%%%%%%%%
	%%%%%%%%%%%%%%%%%%%%%%%%%%%%%%%%%%%%%%%%%%%%%%%%%%%%%%%%%%%%%%%%%%%%%%%%%%%% 
		Consider the following diagram:
		\begin{equation}
			\label{diag:defineh}
			\xymatrix{
				\Br K \ar[r]\ar@{=}[d] &
				\Br C \ar[r]^h\ar@^{^{(}->}[d]^\phi&
				\HH^1(K,\Pic_C) \ar[r]\ar@^{^{(}->}[d]^\rho &
				\HH^3(K,\G_m)\ar@{=}[d]\\
				\Br K \ar[r]&
				\HH^2(K,\kk(\Cbar)^\times) \ar[r]\ar[d]^{\divv_*} \ar[r]^{j_*}&
				\HH^2(K,\kk(\Cbar)^\times/\Kbar^\times) \ar[r]\ar[d]^{\divv_*}&
				\HH^3(K,\G_m)\\
				&\HH^2(K,\Div\Cbar) \ar@{=}[r]&
				\HH^2(K,\Div\Cbar)
			}
		\end{equation}
		
		We claim that this diagram is commutative and that all rows and columns are exact. The existence and exactness of the morphisms in the top row can be deduced from exactness in the rest of the diagram. The second row and column come, respectively, from the Galois cohomology of the exact sequences,
		\begin{align*}
			&1 \to \Kbar \to \kk(\Cbar)^\times \stackrel{j}\to \kk(\Cbar)^\times/\Kbar^\times \to 1\,,\\
			\intertext{and}
			&1 \to \kk(\Cbar)^\times/\Kbar^\times \stackrel{\divv}\to \Div\Cbar \to \Pic\Cbar \to 0 \,.
		\end{align*}
		The connecting homomorphism $\rho$ is injective since $\Div\Cbar$ is a permutation module, which by Shapiro's lemma implies that $\HH^1(K,\Div\Cbar) = 0$. By Tsen's theorem the inflation map 
			\[
				\inf:\HH^2(K,\kk(\Cbar)^\times) \to \HH^2(\kk(C),\overline{\kk(C)}^\times) = \Br \kk(C)
			\]
			is an isomorphism. {The map $\phi$ is the composition of the inverse of this inflation map with the inclusion $\Br C \subset \Br \kk(C)$. Exactness of the first column is proven in \cite[Lemme 14]{CTSan}}. Commutativity of the bottom square is obvious. The other squares commute by definition, so the diagram is exact and commutative as claimed.
		
		\begin{remark}
			The existence of an exact sequence as in the top row of~\eqref{diag:defineh} also follows from the spectral sequence $\HH_{\textup{\'et}}^p(K,\HH_{\textup{\'et}}^q(\Cbar,\G_m)) \Rightarrow \HH^n_{\textup{\'et}}(C,\G_m)$. One can check that these coincide, at least up to sign. See \cite[Annexe]{CTSan}.
		\end{remark}
		
		{It follows from the definition of $\phi$ that} the map $\Br K \to \Br C$ in the top row of~\eqref{diag:defineh} is the natural map induced by the structure morphism of $C$. Hence, the map $h$ in the top row induces an injective homomorphism $h_0: \Br C/\Br_0C \to \HH^1(K,\Pic_C)$. The goal of this section is to compute the composition

		\begin{equation}
			\mathfrakL_c \stackrel{\gamma}\To \frac{\Br C}{\Br_0C} \stackrel{h_0}\To \HH^1(K,\Pic_C)
		\end{equation}
		explicitly. This is accomplished in Proposition \ref{prop:xi} below, but first we need to fix some notation. 
		
		Given $\ell \in L^\times$, let $\chi_\ell \in Z^1(K,\mu_2(\Lbar))$ be the corresponding quadratic character, i.e., fix a square root $m \in \Lbar^\times$ of $\ell$, and define $\chi_\ell(\sigma) = \sigma(m)/m$. Composing $\chi_\ell$ with the bijection $\mu_2 \to \{ 0,1\} \subseteq \Z$ sending $-1$ to $1$, we obtain a map $\tilde\chi_\ell \in C^1(K,\Z^\Omega)$. For any $\tau \in G_K$, we may consider $\tilde\chi_\ell(\tau)$ as a map $\Omega \to \{0,1\} \subseteq \Z$ whose value at $\omega \in \Omega$ will be denoted $\tilde\chi_\ell(\tau)_\omega$.  Note that the action of an element $\sigma\in G_K$ on the map $\tilde\chi_\ell(\tau)$ is then given by $\sigma(\tilde\chi_\ell(\tau))_\omega = \tilde\chi_{\ell}(\tau)_{\sigma^{-1}\omega}$. The norm of $\chi_\ell$ is the quadratic character $\chi_a \in Z^1(K,\mu_2(\Kbar))$ associated to $a = \Norm_{L/K}(\ell) \in K^\times$. We let $\tilde\chi_a \in C^1(K,\Z)$ denote the corresponding map to $\{0,1\}$. We can then define a $1$-cochain $g_\ell\in C^1(K,\Z)$ by requiring that 
		\begin{equation}
			\label{defineg}
			\sum_{\omega \in \Omega}\tilde\chi_\ell(\sigma)_\omega = 2g_\ell(\sigma) + \tilde\chi_{a}(\sigma)\,,\text{ for all $\sigma \in G_K$}\,.
		\end{equation}
		When $C$ is even we use $\infty^+$ and $\infty^-$ to denote the points on $C$ lying above $\infty \in \PP^1$. When $C$ is odd we use both $\infty^+$ and $\infty^-$ to denote the unique point $\infty_C \in C(K)$ lying above $\infty \in \PP^1(K)$. In both cases we set  $\mm = (\infty^+ + \infty^-) \in \Div C$. 
	
		\begin{prop}\label{prop:xi}
		 	Let $\xi_\ell \in C^1(K,\Pic_C)$ be the $1$-cochain defined by
			\begin{equation}\label{eq:defxi}
				\xi_\ell(\sigma) = \left(\sum_{\omega \in \Omega} \tilde\chi_\ell(\sigma)_\omega[\omega]\right) - {g_{\ell}(\sigma)}[\mm] - \tilde{\chi}_a(\sigma)[\infty^+]\,.
			\end{equation}
			If $\ell$ represents a class in $\mathfrakL_c$, then 
			\begin{enumerate}
				\item\label{it:xi1} $\xi_\ell$ is a cocycle, and
				\item\label{it:xi2} the image of $\xi_\ell$ in $\HH^1(K,\Pic_C)$ is equal to $(h \circ \gamma')(\ell)$.
	
			\end{enumerate}
		\end{prop}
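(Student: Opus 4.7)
The plan is to prove both parts together by producing an explicit $2$-cocycle $C_\ell \in Z^2(K,\kk(\Cbar)^\times)$ representing $\phi(\gamma'(\ell))$, lifting $\xi_\ell$ to a cochain $D_\ell \in C^1(K,\Div\Cbar)$, and verifying the identity $\delta D_\ell=\divv(C_\ell)$. Such an identity yields~(\ref{it:xi1}), because it forces $\delta D_\ell$ to be principal, so $\xi_\ell$ is a cocycle modulo principal divisors; and it yields~(\ref{it:xi2}), because by construction of $\rho$ in the middle column of~\eqref{diag:defineh} one has $\rho([\xi_\ell])=[j_*C_\ell]=j_*\phi(\gamma'(\ell))$, so $[\xi_\ell]=h(\gamma'(\ell))$ by injectivity of $\rho$.

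To produce $C_\ell$, pass to $\Kbar$, where $\kk(C_L)\otimes_K\Kbar \cong \prod_{\omega\in\Omega}\kk(\Cbar)$. A choice of square root $m(\omega)\in\Kbar$ of $\ell(\omega)$ for each $\omega$ splits $(\ell,x-\alpha)_2\otimes_K\Kbar$ componentwise into matrix algebras with standard generators $i_\omega, j_\omega$ satisfying $i_\omega^2=\ell(\omega)$ and $j_\omega^2=x-\omega$. The Galois action on $m\in\Kbar^\Omega$ is twisted by $\chi_\ell$ and permutes positions, and conjugating the componentwise splitting by $j_\omega^{\tilde\chi_\ell(\sigma)_\omega}$ produces a descent cocycle whose $\omega$-component is $(x-\omega)^{f_\ell(\sigma,\tau)_\omega}$, where $f_\ell(\sigma,\tau)_\omega\in\{0,1\}$ is the carry defined by
\[
\tilde\chi_\ell(\sigma)_\omega + \tilde\chi_\ell(\tau)_{\sigma^{-1}\omega} - \tilde\chi_\ell(\sigma\tau)_\omega = 2f_\ell(\sigma,\tau)_\omega.
\]
Since corestriction from the induced module $\prod_\omega \kk(\Cbar)^\times$ to $\kk(\Cbar)^\times$ is the product across components, this yields
\[
C_\ell(\sigma,\tau)=\prod_{\omega\in\Omega}(x-\omega)^{f_\ell(\sigma,\tau)_\omega}.
\]

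For the coboundary side, read~\eqref{eq:defxi} as an equality in $\Div\Cbar$ to obtain $D_\ell\in C^1(K,\Div\Cbar)$. Using $\sigma\mathfrak m=\mathfrak m$ and $\sigma\infty^+=\infty^+ + \tilde\chi_c(\sigma)(\infty^--\infty^+)$ (where $\chi_c$ is the character of $K(\sqrt c)/K$), a direct computation gives
\[
\delta D_\ell(\sigma,\tau) = \sum_\omega 2 f_\ell(\sigma,\tau)_\omega \omega - (\delta g_\ell)(\sigma,\tau) \mathfrak m - 2 f_a(\sigma,\tau)\infty^+ - \tilde\chi_a(\tau)\tilde\chi_c(\sigma)(\infty^--\infty^+),
\]
where $f_a$ is the analogous carry for $\tilde\chi_a$ and $\delta g_\ell$ is the $\Z$-valued coboundary of $g_\ell$. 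Summing~\eqref{defineg} applied to the three terms in the definition of $f_\ell$ yields $\sum_\omega f_\ell(\sigma,\tau)_\omega = \delta g_\ell(\sigma,\tau) + f_a(\sigma,\tau)$, so the identity $\divv(x-\omega)=2\omega-\mathfrak m$ collapses the first three summands of $\delta D_\ell(\sigma,\tau)$ to $\divv(C_\ell(\sigma,\tau))$, leaving
\[
\delta D_\ell(\sigma,\tau) - \divv(C_\ell(\sigma,\tau)) = \bigl(f_a(\sigma,\tau) - \tilde\chi_a(\tau)\tilde\chi_c(\sigma)\bigr)(\infty^- - \infty^+).
\]

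Finally, the hypothesis $\overline\ell \in \mathfrakL_c$ is exactly what kills this error: it says $\overline a = \overline c$ in $K^\times/K^{\times2}$, hence $\tilde\chi_a = \tilde\chi_c$, and since both cochains take values in $\{0,1\}$ a direct check shows $f_a(\sigma,\tau) = \tilde\chi_a(\sigma)\tilde\chi_a(\tau) = \tilde\chi_a(\tau)\tilde\chi_c(\sigma)$. (When $C$ is odd, $\infty^+=\infty^-$ and the error term vanishes automatically.) The main obstacle is the cocycle bookkeeping in the second step: tracking the twisted Galois action on the permutation module $\Lbar = \Kbar^\Omega$ through the componentwise splitting (which is the source of the asymmetric index $\sigma^{-1}\omega$ in $f_\ell$) and checking that the local splittings descend compatibly under corestriction, so that the cocycle $C_\ell$ really does represent $\phi(\gamma'(\ell))$.
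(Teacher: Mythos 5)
Your proposal is correct and follows essentially the same route as the paper: your carry satisfies $f_\ell(\sigma,\tau)_\omega = \tilde\chi_\ell(\sigma)_\omega\tilde\chi_\ell(\tau)_{\sigma^{-1}\omega}$, so your $C_\ell$ is exactly the paper's cocycle $\epsilon$ (which the paper identifies with $\phi(\gamma'(\ell))$ via the cup-product description of $(x-\alpha,\ell)_2$ rather than your matrix-algebra splitting, an interchangeable computation), and your coboundary calculation for $D_\ell$ and the concluding diagram chase through the injectivity of $\rho$ match Lemmas~\ref{lem:sigmatau1}--\ref{lem:coboundarycomp}. One trivial imprecision: $\overline{\ell}\in\mathfrakL_c$ gives $\overline{a}\in\{1,\overline{c}\}$ rather than $\overline{a}=\overline{c}$, but when $\overline{a}=1$ both $f_a$ and $\tilde\chi_a(\tau)\tilde\chi_c(\sigma)$ vanish, so your error term still dies.
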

	
		To prove Proposition~\ref{prop:xi}\eqref{it:xi2} we will explicitly compute the images of $\xi_\ell$ and $\gamma'(\ell)$ under the maps $\phi$ and $\rho$ of diagram~\eqref{diag:defineh}. This will involve a rather technical computation with cocycles carried out in the lemmas below. Having accomplished this, the proposition will follow from a simple diagram chase. 
		
		\begin{lemma}\label{lem:sigmatau1}
			For any $\sigma, \tau \in G_K$ and $\omega \in \Omega$ we have
			\begin{enumerate}
				 \item\label{it:1} $\tilde\chi_\ell(\tau)_{\sigma^{-1}\omega} + \tilde\chi_\ell(\sigma)_\omega  - \tilde\chi_\ell(\sigma\tau)_\omega = 2\tilde\chi_\ell(\sigma)_\omega\tilde\chi_\ell(\tau)_{\sigma^{-1}\omega}$, and
				\vspace{.05in}
				 \item\label{it:2} $g_{\ell}(\tau) + g_{\ell}(\sigma) - g_{\ell}(\sigma\tau) +  \tilde{\chi}_a(\sigma)\tilde{\chi}_a(\tau) =  \#\{ \omega \in \Omega \,:\, \tilde\chi_\ell(\sigma)_\omega\tilde\chi_\ell(\tau)_{\sigma^{-1}\omega} = 1\}$.
			\end{enumerate}
			If, moreover, $\ell$ represents a class in $\mathfrakL_c$, then 
			\begin{enumerate}[resume]
				 \item\label{it:3} $\sigma(\tilde\chi_a(\tau)\infty^+) + \tilde\chi_a(\sigma)\infty^+ -  \tilde\chi_a(\sigma\tau)\infty^+ = \tilde\chi_a(\sigma)\tilde\chi_a(\tau)\mm\,.$
			\end{enumerate}
		\end{lemma}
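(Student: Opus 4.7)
The plan is to verify each of the three identities by reducing to the multiplicative cocycle relation for $\chi_\ell \in Z^1(K,\mu_2(\Lbar))$ (and its pushforward $\chi_a$), translated additively via $\mu_2 \simeq \{0,1\} \subset \Z$.

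For (\ref{it:1}), I would start from the cocycle identity $\chi_\ell(\sigma\tau) = \chi_\ell(\sigma)\cdot\sigma(\chi_\ell(\tau))$ and evaluate at $\omega\in\Omega$. Because Galois acts on $\Map(\Omega,\Kbar)$ via $(\sigma f)_\omega = \sigma(f_{\sigma^{-1}\omega})$, and $\chi_\ell(\tau)_\omega$ is $\pm1$ hence fixed by $\sigma$, this gives $\chi_\ell(\sigma\tau)_\omega = \chi_\ell(\sigma)_\omega\cdot\chi_\ell(\tau)_{\sigma^{-1}\omega}$. Passing through the bijection $\mu_2\to\{0,1\}$ yields the congruence $\tilde\chi_\ell(\sigma\tau)_\omega \equiv \tilde\chi_\ell(\sigma)_\omega + \tilde\chi_\ell(\tau)_{\sigma^{-1}\omega} \pmod 2$. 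Since all three summands lie in $\{0,1\}$, the nonnegative integer $\tilde\chi_\ell(\sigma)_\omega + \tilde\chi_\ell(\tau)_{\sigma^{-1}\omega} - \tilde\chi_\ell(\sigma\tau)_\omega$ takes the value $2$ exactly when both $\tilde\chi_\ell(\sigma)_\omega$ and $\tilde\chi_\ell(\tau)_{\sigma^{-1}\omega}$ equal $1$, and $0$ otherwise, which is $2\tilde\chi_\ell(\sigma)_\omega\tilde\chi_\ell(\tau)_{\sigma^{-1}\omega}$.

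For (\ref{it:2}), I would sum (\ref{it:1}) over $\omega\in\Omega$, using that $\omega\mapsto\sigma^{-1}\omega$ is a bijection of $\Omega$ to replace $\sum_\omega\tilde\chi_\ell(\tau)_{\sigma^{-1}\omega}$ by $\sum_\omega\tilde\chi_\ell(\tau)_\omega$. The defining relation \eqref{defineg} rewrites the summed left-hand side as $2(g_\ell(\sigma)+g_\ell(\tau)-g_\ell(\sigma\tau)) + (\tilde\chi_a(\sigma)+\tilde\chi_a(\tau)-\tilde\chi_a(\sigma\tau))$. Applying the scalar case of (\ref{it:1}) to $\chi_a$ (the one-element ``$\Omega$'') identifies $\tilde\chi_a(\sigma)+\tilde\chi_a(\tau)-\tilde\chi_a(\sigma\tau)$ with $2\tilde\chi_a(\sigma)\tilde\chi_a(\tau)$; dividing through by $2$ then gives (\ref{it:2}).

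For (\ref{it:3}), the hypothesis $\overline\ell\in\mathfrakL_c$ is used crucially: it asserts that $a = \Norm_{L/K}(\ell)$ and $c$ differ by a square in $K^\times$, so choosing compatible square roots of $a$ and $c$ in $\Kbar$ forces $\tilde\chi_a = \tilde\chi_c$ as cochains. When $C$ is odd, $\infty^+ = \infty^- = \infty_C$ is $K$-rational and $\mm = 2\infty^+$, so (\ref{it:3}) reduces to $(\tilde\chi_a(\sigma)+\tilde\chi_a(\tau)-\tilde\chi_a(\sigma\tau))\infty^+ = 2\tilde\chi_a(\sigma)\tilde\chi_a(\tau)\infty^+$, which is the scalar case of (\ref{it:1}) for $\chi_a$. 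When $C$ is even, the two points at infinity are defined over $K(\sqrt c)$ and are swapped by the nontrivial element of $\Gal(K(\sqrt c)/K)$, giving the formula $\sigma(\infty^+) = (1-\tilde\chi_a(\sigma))\infty^+ + \tilde\chi_a(\sigma)\infty^-$ in $\Div\Cbar$. Substituting this into the left-hand side of (\ref{it:3}), collecting coefficients of $\infty^+$ and $\infty^-$, and applying the scalar identity $\tilde\chi_a(\sigma)+\tilde\chi_a(\tau)-\tilde\chi_a(\sigma\tau) = 2\tilde\chi_a(\sigma)\tilde\chi_a(\tau)$ one last time leaves $\tilde\chi_a(\sigma)\tilde\chi_a(\tau)(\infty^++\infty^-) = \tilde\chi_a(\sigma)\tilde\chi_a(\tau)\mm$, as required.

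The main obstacle is (\ref{it:3}). Parts (\ref{it:1}) and (\ref{it:2}) are essentially bookkeeping once the $\mu_2$-cocycle identity is translated additively, but in (\ref{it:3}) one has to both pin down the Galois action on the pair $\{\infty^+,\infty^-\}$ in the even case and locate exactly where the hypothesis $\overline\ell\in\mathfrakL_c$ enters — namely in identifying $\tilde\chi_a$ with the quadratic character $\tilde\chi_c$ cutting out $K(\sqrt c)$.
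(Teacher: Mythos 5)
Your proof is correct and follows essentially the same route as the paper's: additively translate the cocycle identity for $\chi_\ell$ to get (1), sum over $\Omega$ and invoke \eqref{defineg} together with the scalar case of (1) for $\chi_a$ to get (2), and use the hypothesis $\overline{\ell}\in\mathfrak{L}_c$ to control the Galois action on $\{\infty^+,\infty^-\}$ for (3). The only slip is in reading that hypothesis: $\overline{\ell}\in\mathfrak{L}_c$ allows $a\in K^{\times 2}$ as well as $a\in cK^{\times 2}$, and in the former case your formula $\sigma(\infty^+)=(1-\tilde\chi_a(\sigma))\infty^+ + \tilde\chi_a(\sigma)\infty^-$ can fail when $c\notin K^{\times 2}$ --- but then $\tilde\chi_a\equiv 0$, every term of (3) vanishes, and the identity holds trivially, which is exactly how the paper disposes of that case before checking the four sign possibilities in the remaining case $a\in cK^{\times 2}$.
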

		
		\begin{proof}
			Since $\chi_\ell$ is a $1$-cocycle, we have $\chi_\ell(\sigma\tau) = \sigma(\chi_\ell(\tau))\chi_\ell(\sigma)$. Evaluating at $\omega$ and rearranging we get $\chi_\ell(\tau)_{\sigma^{-1}\omega} = \chi_\ell(\sigma\tau)_\omega/\chi_\ell(\sigma)_\omega$. From this it follows that 
			\[ 
				\tilde\chi_\ell(\tau)_{\sigma^{-1}\omega} \equiv \tilde\chi_\ell(\sigma)_\omega - \tilde\chi_\ell(\sigma\tau)_\omega \bmod 2\,. 
			\] 
			Since all of the terms are either $0$ or $1$ we see that 
			\begin{equation*}
				\tilde\chi_\ell(\tau)_{\sigma^{-1}\omega} + \tilde\chi_\ell(\sigma)_\omega - \tilde\chi_\ell(\sigma\tau)_\omega = 
				\begin{cases}
					2 & \text{if $\tilde\chi_\ell(\sigma)_\omega = \tilde\chi_\ell(\tau)_{\sigma^{-1}\omega} = 1\,,$}\\
					0 & \text{otherwise\,.} 
				\end{cases}
			\end{equation*}
			This proves~\eqref{it:1}. To prove~\eqref{it:2} we sum both sides of~\eqref{it:1} over all $\omega \in \Omega$ and apply~\eqref{defineg}. This gives 
			\[
				2g_\ell(\tau) + 2g_\ell(\sigma) - 2g_\ell(\sigma\tau) + \tilde\chi_a(\tau) + \tilde\chi_a(\sigma) - \tilde\chi_a(\sigma\tau) = 2  \#\{ \omega \in \Omega \,:\, \tilde\chi_\ell(\sigma)_\omega\tilde\chi_\ell(\tau)_{\sigma^{-1}\omega} = 1\}\,.
			\] 
			Using that
			\[
				\tilde\chi_a(\tau) + \tilde\chi_a(\sigma)  - \tilde\chi_a(\sigma\tau) = 2 \tilde{\chi}_a(\sigma)\tilde{\chi}_a(\tau)\,
			\]
			(which is proved by the same argument as above), and then removing the common factor of $2$ gives~\eqref{it:2}.
			
			If $\ell$ represents a class in $\mathfrakL_c$, then $a \in c^rK^{\times 2}$ for some $r \in \{0,1\}$. If $a \in K^{\times 2}$ then both sides of~\eqref{it:3} are trivial, so to prove~\eqref{it:3} we may assume $a \in cK^{\times 2}$. Under this assumption, the action of $G_K$ on $\infty^+$ is determined by the character $\chi_a$, so all of the terms in~\eqref{it:3} are determined by the values of $\tilde\chi_a(\sigma)$ and $\tilde\chi_a(\tau)$. In each of the four possibilities, one can check directly that~\eqref{it:3} holds. This completes the proof.
		\end{proof}
		
		\begin{lemma}\label{lem:coboundarycomp}
			Assume that $\ell$ represents a class in $\mathfrakL_c$, let $\xi_\ell' \in C^1(K,\Div\Cbar)$ denote the $1$-cochain defined by
			\[
				\xi_\ell'(\sigma) = \left(\sum_{\omega \in \Omega} \tilde\chi_\ell(\sigma)_\omega\omega\right) - g_\ell(\sigma)\mm - \tilde{\chi}_a(\sigma)\infty^+\,, 
			\]
			and let $\partial:C^1(K,\Div\Cbar) \to C^2(K,\Div\Cbar)$ denote the coboundary map on cochains.  Then for $(\sigma,\tau) \in G_K \times G_K$, we have
			\[
				\partial\xi_\ell'(\sigma,\tau) = \divv \left( \Norm_{\kk(C_L)/\kk(C)}\left( (x - \alpha)^{\tilde\chi_\ell(\sigma)\cdot \sigma(\tilde\chi_\ell(\tau))}\right) \right)\,.
			\]
			In particular, $\xi_\ell$ is a cocycle and the image of its class under $\rho$ is represented by the $2$-cocycle
			\[
				(\sigma,\tau) \mapsto \Norm_{\kk(C_L)/\kk(C)}((x - \alpha)^{\tilde{\chi}_{\ell}(\sigma)\cdot\sigma(\tilde{\chi}_{\ell}(\tau))}).
			\]
		\end{lemma}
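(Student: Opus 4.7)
The plan is to unfold the coboundary $\partial\xi_\ell'(\sigma,\tau) = \sigma(\xi_\ell'(\tau)) - \xi_\ell'(\sigma\tau) + \xi_\ell'(\sigma)$ by splitting $\xi_\ell'$ into its three summands and applying the three parts of Lemma~\ref{lem:sigmatau1} in turn. The $G_K$-action permutes $\Omega$ so that $\sigma(\tilde\chi_\ell(\tau))_\omega = \tilde\chi_\ell(\tau)_{\sigma^{-1}\omega}$ and fixes $\mm$; the action on $\infty^+$ is controlled by the character $\chi_a$ and is precisely the content of part~(3) of the lemma.

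Setting $e_\omega := \tilde\chi_\ell(\sigma)_\omega\tilde\chi_\ell(\tau)_{\sigma^{-1}\omega}$, Lemma~\ref{lem:sigmatau1}(1) turns the contribution of the ramification-point summand of $\xi_\ell'$ into $2\sum_\omega e_\omega\,\omega$. Using Lemma~\ref{lem:sigmatau1}(2) and the fact that $\sum_\omega e_\omega = \#\{\omega : e_\omega = 1\}$, the $-g_\ell\,\mm$ summand contributes $(\tilde\chi_a(\sigma)\tilde\chi_a(\tau) - \sum_\omega e_\omega)\mm$ (using that $\mm$ is $G_K$-fixed). Finally, by Lemma~\ref{lem:sigmatau1}(3), the $-\tilde\chi_a\,\infty^+$ summand contributes $-\tilde\chi_a(\sigma)\tilde\chi_a(\tau)\mm$. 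Adding up, the $\tilde\chi_a\tilde\chi_a\,\mm$-terms cancel and we obtain
\[
	\partial\xi_\ell'(\sigma,\tau) \;=\; \sum_{\omega\in\Omega} e_\omega\,(2\omega - \mm).
\]

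The remaining step is geometric: on $\Cbar$, the function $x - \alpha_\omega$ has divisor $2\omega - \mm$ (a double zero at the ramification point $\omega$, and behavior at $\pi^{-1}(\infty)$ accounting for $-\mm$ in both the even and odd cases). So the right-hand side of the displayed equation equals $\divv\bigl(\prod_\omega (x-\alpha_\omega)^{e_\omega}\bigr) = \divv\bigl(\Norm_{\kk(C_L)/\kk(C)}((x-\alpha)^{\tilde\chi_\ell(\sigma)\cdot\sigma(\tilde\chi_\ell(\tau))})\bigr)$, establishing the main formula. For the ``in particular'' assertions, $\partial\xi_\ell'$ lies in the group of principal divisors, so its image in $C^2(K,\Pic\Cbar)$ vanishes and $\xi_\ell$ is a cocycle; moreover the connecting map $\rho$ of diagram~\eqref{diag:defineh} is by definition computed by lifting to $\xi_\ell'$, taking $\partial$, and identifying with a preimage under $\divv_*$, yielding precisely the stated $2$-cocycle representative.

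The only real obstacle is ensuring the cancellation of the $\tilde\chi_a(\sigma)\tilde\chi_a(\tau)\mm$-terms arising from the $g_\ell$- and $\infty^+$-correction summands of $\xi_\ell'$; this is why $\xi_\ell'$ is designed with the specific correction $-\tilde\chi_a(\sigma)\infty^+$, and why the hypothesis $\overline\ell\in\mathfrakL_c$ (equivalently, $a\in\langle\overline c\rangle$) is invoked, via part~(3) of Lemma~\ref{lem:sigmatau1}, to control the $G_K$-action on $\infty^+$ when it is not $K$-rational.
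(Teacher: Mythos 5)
Your proposal is correct and follows essentially the same route as the paper: expand the coboundary term by term, apply parts (1)--(3) of Lemma~\ref{lem:sigmatau1} to reduce it to $\sum_\omega e_\omega(2\omega-\mm)$, and identify this with $\divv$ of the stated norm using $\divv(x-x(\omega))=2\omega-\mm$. The sign bookkeeping and the treatment of the ``in particular'' assertions also match the paper's argument.
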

			
		\begin{proof}
			The second statement follows easily from the first.
				
			To prove the first statement we compute $\partial\xi_\ell'$ explicitly. For $(\sigma,\tau) \in G_K\times G_K$ we have
			\begin{align}\label{partialxi1}
				(\partial\xi_\ell')(\sigma,\tau) 
				& = \sum_{\omega \in \Omega} \left(\sigma(\tilde\chi_\ell(\tau)_\omega\omega) + \tilde\chi_\ell(\sigma)_\omega\omega - \tilde\chi_\ell(\sigma\tau)_\omega\omega\right)\\
				& \quad\quad - (g_\ell(\tau) + g_\ell(\sigma) - g_\ell(\sigma\tau))\mm\\ \label{partialxi2}
				& \quad\quad - \left(\sigma(\tilde\chi_a(\tau)\infty^+) + \tilde\chi_a(\sigma)\infty^+ - \tilde\chi_a(\sigma\tau)\infty^+\right)\,.
			\end{align}
			Noting that $\sum_{\omega \in \Omega} \tilde\chi_\ell(\tau)_\omega \sigma(\omega) = \sum_{\omega \in \Omega} \tilde\chi_\ell(\tau)_{\sigma^{-1}\omega} \omega$ 
			and applying Lemma \ref{lem:sigmatau1}\eqref{it:1},~\eqref{partialxi1} can be reduced to $\sum_{\omega \in \Omega} \tilde\chi_\ell(\sigma)_\omega\tilde\chi_\ell(\tau)_{\sigma^{-1}\omega}2\omega$. Lemma \ref{lem:sigmatau1}\eqref{it:3} states that~\eqref{partialxi2} is equal to $-\tilde\chi_a(\sigma)\tilde\chi_a(\tau)\mm$. Using these facts and then applying Lemma \ref{lem:sigmatau1}\eqref{it:2} we obtain,
			\begin{align*}
			(\partial\xi_\ell')(\sigma,\tau) 
				=&\left(\sum_{\omega \in \Omega} \tilde\chi_\ell(\sigma)_\omega\tilde\chi_\ell(\tau)_{\sigma^{-1}\omega}2\omega\right) - \left(g_\ell(\tau) + g_\ell(\sigma) - g_\ell(\sigma\tau) + \tilde\chi_a(\sigma)\tilde\chi_a(\tau)\right)\mm\\
				=& \left(\sum_{\omega \in \Omega} \tilde\chi_\ell(\sigma)_\omega\tilde\chi_\ell(\tau)_{\sigma^{-1}\omega}2\omega\right)
					- \#\left\{ \omega \in \Omega \,:\, \tilde\chi_\ell(\sigma)_\omega\tilde\chi_\ell(\tau)_{\sigma^{-1}\omega} = 1 \right\}\mm\\
				=& \sum_{\omega \in \Omega} \tilde\chi_\ell(\sigma)_\omega\tilde\chi_\ell(\tau)_{\sigma^{-1}\omega}\left(2\omega-\mm\right)\\
				=& \sum_{\omega \in \Omega} \divv\left( (x-x(\omega))^{ \tilde\chi_\ell(\sigma)_\omega\tilde\chi_\ell(\tau)_{\sigma^{-1}\omega} }\right)\\
				=& \divv\left( \Norm_{\kk(C_L)/\kk(C)}\left( (x - \alpha)^{\tilde\chi_\ell(\sigma)\cdot\sigma(\tilde\chi_\ell(\tau))}\right) \right)\,.
			\end{align*}
			This completes the proof.		
		\end{proof}
	
		\begin{lemma}\label{lem:cupprod}
			Let $\epsilon \in C^2(K,\kk(\Cbar)^\times)$ be the $2$-cochain defined by
			\[
				\epsilon(\sigma,\tau) = \Norm_{\kk(C_L)/\kk(C)}\left((x-\alpha)^{\tilde\chi_\ell(\sigma)\cdot\sigma(\tilde\chi_\ell(\tau))}\right)\,.
			\]
			Then $\epsilon$ is a $2$-cocycle and the map $\phi$ in~\eqref{diag:defineh} sends $\gamma'(\ell)$ to the class of $\epsilon$ in $\HH^2(K,\kk(\Cbar)^\times)$.
		\end{lemma}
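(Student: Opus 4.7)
My plan is to prove the two assertions of the lemma in tandem, using explicit cocycle computations.

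For the claim that $\epsilon$ is a $2$-cocycle, I would compute $\partial\epsilon$ directly. Writing
\[
\partial\epsilon(\sigma,\tau,\rho)=\prod_{\omega\in\Omega}(x-x(\omega))^{E_\omega},
\]
each exponent $E_\omega$ is a $\Z$-linear combination of four products of two factors of $\tilde\chi_\ell$. Two applications of Lemma~\ref{lem:sigmatau1}\eqref{it:1} --- once to rewrite $\tilde\chi_\ell(\sigma\tau)_\omega$ in terms of $\tilde\chi_\ell(\sigma)_\omega$ and $\tilde\chi_\ell(\tau)_{\sigma^{-1}\omega}$, and once to rewrite $\tilde\chi_\ell(\tau\rho)_{\sigma^{-1}\omega}$ in terms of $\tilde\chi_\ell(\tau)_{\sigma^{-1}\omega}$ and $\tilde\chi_\ell(\rho)_{\tau^{-1}\sigma^{-1}\omega}$ --- cause every term in $E_\omega$ to cancel. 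Thus $\partial\epsilon\equiv 1$ and $\epsilon\in Z^2(K,\kk(\Cbar)^\times)$.

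For the identification $\phi(\gamma'(\ell))=[\epsilon]$, I would exhibit a $2$-cocycle representative for $\gamma'(\ell)$ in $\HH^2(\kk(C),\overline{\kk(C)}^\times)$ and show it is cohomologous to the inflation of $\epsilon$. The class $(\ell,x-\alpha)_2\in\Br\kk(C_L)$ is the cup product $\chi_\ell\cup\chi_{x-\alpha}$ of Kummer characters. Shapiro's lemma identifies $\HH^2(\kk(C_L),\mu_2)$ with $\HH^2(\kk(C),\mu_2^\Omega)$, and under this identification the corestriction $\Cor_{\kk(C_L)/\kk(C)}$ becomes the augmentation $\mu_2^\Omega\to\mu_2$. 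Combining these yields the cocycle
\[
c(\sigma,\tau)=\prod_{\omega\in\Omega}\chi_\ell(\sigma)_\omega\cdot\chi_{x-\alpha}(\tau)_{\sigma^{-1}\omega}\in\mu_2\subset\overline{\kk(C)}^\times
\]
representing $\gamma'(\ell)$. To show $[c]=[\epsilon]$ in $\HH^2(\kk(C),\overline{\kk(C)}^\times)$, I would introduce the $1$-cochain $b(\sigma):=\prod_\omega\sqrt{x-x(\omega)}^{\tilde\chi_\ell(\sigma)_\omega}\in\overline{\kk(C)}^\times$ and compute $\partial b$ using the action formula $\sigma\sqrt{x-x(\omega)}=\chi_{x-\alpha}(\sigma)_{\sigma\omega}\sqrt{x-x(\sigma\omega)}$ together with Lemma~\ref{lem:sigmatau1}\eqref{it:1}. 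The doubled exponents arising from $(\sqrt{x-x(\omega)})^2$ reassemble via that identity into $\epsilon(\sigma,\tau)$, leaving a residual $\mu_2$-valued factor cohomologous to $c$. The injectivity of inflation $\HH^2(K,\kk(\Cbar)^\times)\hookrightarrow\Br\kk(C)$ then gives $\phi(\gamma'(\ell))=[\epsilon]$.

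The main obstacle will be the cocycle book-keeping in the $\partial b$ computation: the $\sigma$-action on $\sqrt{x-x(\omega)}$ both permutes indices and introduces $\mu_2$-valued signs $\chi_{x-\alpha}(\sigma)_{\sigma\omega}$, so the expansion of $\partial b$ mingles the integer-exponent piece (which reassembles into $\epsilon$) with a residual $\mu_2$-valued piece. Matching this residual piece with $c$ requires careful tracking of the asymmetric roles of $\sigma$ and $\tau$, essentially reflecting the anti-commutativity of the cup product on $\HH^1\times\HH^1$, which is trivial in $\mu_2$-coefficients but still requires a small coboundary adjustment at the cochain level.
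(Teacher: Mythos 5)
Your proposal is correct and follows essentially the same route as the paper: both identify $\Cor_{\kk(C_L)/\kk(C)}((\ell,x-\alpha)_2)$ with the norm (augmentation) of a cup product of quadratic characters via Shapiro's lemma, and both bridge the gap to $\epsilon$ via the coboundary of the $1$-cochain $\sigma\mapsto\prod_\omega\sqrt{x-x(\omega)}^{\tilde\chi_\ell(\sigma)_\omega}$ together with Lemma~\ref{lem:sigmatau1}\eqref{it:1} (the paper sidesteps your residual $\mu_2$-factor bookkeeping by replacing $(\ell,x-\alpha)_2$ with $(x-\alpha,\ell)_2$, legitimate since everything is $2$-torsion, and obtains the cocycle property of $\epsilon$ as a byproduct rather than by your direct check, which also works). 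One small correction: your displayed formula for $c(\sigma,\tau)$ should be $\prod_\omega\chi_\ell(\sigma)_\omega^{\tilde\chi_{x-\alpha}(\tau)_{\sigma^{-1}\omega}}$ rather than a literal product of the two character values, which would not even be a cocycle.
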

	
		\begin{proof}
			The composition ${\inf \circ \phi} : \Br C \to \Br \kk(C)$ is the natural inclusion. If $\bar{\epsilon}$ denotes the cohomology class of $\epsilon$, then $\inf(\bar\epsilon)$ is represented by the cocycle $\epsilon_{\kk(C)}$ defined by
			\[
				\epsilon_{\kk(C)}(\sigma,\tau) = \Norm_{\kk(C_L)/\kk(C)}\left((x-\alpha)^{\tilde\psi_\ell(\sigma)\cdot\sigma(\tilde\psi_\ell(\tau))}\right)\,,
			\]
			where $\tilde\psi_\ell \in C^1\left(\kk(C),\{0,1\}^\Omega\right)$ and $\psi_\ell \in Z^1\left(\kk(C) , \mu_2(\Lbar)\right)$ denote the lifts of $\tilde\chi_\ell$ and $\chi_\ell$, obtained by considering $\ell$ as an element of $\kk(C_L)$. We want to show that $\epsilon_{\kk(C)}$ represents $\Cor_{\kk(C_L)/\kk(C)}\left((\ell,x-\alpha)_2\right)$. We will instead show that $\Cor_{\kk(C_L)/\kk(C)}\left((x-\alpha,\ell)_2\right)$ is represented by the inverse of $\epsilon_{\kk(C)}$. The result then follows from standard properties of the cup product (or because all elements in question are $2$-torsion).
			
			Standard cohomological arguments combined with Shapiro's lemma give a sequence of isomorphisms 
			\[
				\Br\kk(C_L)[2] \simeq \HH^2\left(\kk(C_L),\mu_2^{\otimes 2}\right) \simeq \HH^2\left(\kk(C),\mu_2(\Lbar)^{\otimes 2}\right) \simeq \HH^2\left(\kk(C),\mu_2(\Lbar)\right)\,,
			\]
			under which $(x-\alpha,\ell)_2$ is represented by the cup product, $\left(\psi_{x-\alpha} \cup \psi_\ell\right) \in Z^2(\kk(C),\mu_2(\Lbar)^{\otimes2})$. Here $\psi_{x-\alpha}$ denotes the quadratic character $\psi_{x-\alpha} \in Z^1\left(\kk(C),\mu_2(\Lbar)\right)$ associated to $x-\alpha$,{ i.e., if $s \in \overline{\kk(C)}_L^\times := (\overline{\kk(C)}\otimes_K L)^\times$ is a square root of $x-\alpha$,} then $\psi_{x-\alpha}(\sigma) = \sigma(s)/s$. The image in $\HH^2(\kk(C),\mu_2(\Lbar))$ of the cup product above is represented by the $2$-cochain,
			\begin{align*}
				\left(\psi_{x-\alpha} \cup \psi_\ell\right)(\sigma,\tau)
				&= \psi_{x-\alpha}(\sigma)\otimes\sigma(\psi_\ell(\tau))\\
				&= \left(\frac{\sigma(s)}{s}\right)^{\sigma(\tilde\psi_\ell(\tau))}
				= \frac{\sigma(s^{\tilde{\psi}_\ell(\tau)})}{s^{\sigma(\tilde{\psi}_\ell(\tau))}}\\
				&= \left(\frac{\sigma(s^{\tilde\psi_\ell(\tau)})s^{\tilde\psi_\ell(\sigma)}}{s^{\tilde\psi_\ell(\sigma\tau)}}\right)
				\left( \frac{s^{\tilde\psi_\ell(\sigma\tau)}}{s^{\sigma(\tilde{\psi_\ell}(\tau))}s^{\tilde\psi_\ell(\sigma)}} \right)\,.
			\end{align*}
			We now note that the first factor is the coboundary of the $1$-cochain 
			\[
				\left(\sigma \mapsto s^{\tilde\psi(\sigma)}\right) \in C^1\left(\kk(C),\overline{\kk(C)}_L^\times\right)\,,
			\]
			while using the obvious analog of Lemma \ref{lem:sigmatau1}\eqref{it:1} we can rewrite the second factor as
			\[
				(x-\alpha)^{-\tilde\psi_\ell(\sigma)\cdot\sigma(\tilde\psi_\ell(\tau))}\,.
			\]
			The norm of this expression is the inverse of $\epsilon_{\kk(C)}$. This proves that $\epsilon$ is a cocycle, and that $\epsilon_{\kk(C)}$ represents $\Cor_{\kk(C_L)/\kk(C)}((x-\alpha,\ell)_2)$ as required.
		\end{proof}
		
		\begin{proof}[Proof of Proposition \ref{prop:xi}]
			The first statement was proven in Lemma~\ref{lem:coboundarycomp}. For the second statement, suppose $\ell$ represents a class in $\mathfrakL_c$ and let $\bar{\xi}_\ell$ denote the class of $\xi_\ell$ in $\HH^1(K,\Pic_C)$. Lemmas~\ref{lem:coboundarycomp} and~\ref{lem:cupprod} show that
			\[
				(j_*\circ \phi \circ \gamma')(\ell) = \rho(\bar\xi_{\ell}).
			\]
			Since $\rho\circ h = j_*\circ \phi$ by~\eqref{diag:defineh} and $\rho$ is injective, this completes the proof.
		\end{proof}
			
	%%%%%%%%%%%%%%%%%%%%%%%%%%%%%%%%%%%%%%%%%%%%%%%%%%%%%%%%%%%%%%%%%%%%%%%%%%%%
	\section{Cohomological setup for $2$-descent}\label{sec:2descent}%%%%%%%%%%%
	%%%%%%%%%%%%%%%%%%%%%%%%%%%%%%%%%%%%%%%%%%%%%%%%%%%%%%%%%%%%%%%%%%%%%%%%%%%%
		
		In the previous section we explicitly computed the map $h\circ \gamma : \mathfrakL_c \to \HH^1(K,\Pic_C)$. In this section we relate this to the map $\mathfrakL_1 \to \HH^1(K,J)/\langle \Pic^1_C\rangle$ coming from the theory of explicit $2$-descents described in \cite{PS-descent}.
		
		The $2$-torsion subgroup of $J(\Kbar)$ may be identified (as a Galois module) with the set of even cardinality subsets of $\Omega$, modulo complements. Under this identification addition in $J[2]$ is given by the symmetric difference (i.e., the union of the sets minus their intersection), and the Weil pairing, denoted $e_2$, of two subsets is given by the parity of their intersection. By convention, for any $\omega\in\Omega$, the notation $\{\omega,\omega\}$ will be understood to mean the identity element.

		For any $\omega_0 \in \Omega$, we may define a map 
		\begin{equation}
			\label{map:definee}
			e_{\omega_0}: J[2] \to \mu_2(\Lbar) = \Map(\Omega, \mu_2(\Kbar)), \quad
			P \mapsto \left( \omega \mapsto e_2(P,\{\omega, \omega_0\})\right)\,.
		\end{equation}
		If $\omega_1 \in \Omega$, then, for every $P \in J[2]$, $e_{\omega_0}(P)$ and $e_{\omega_1}(P)$ differ by an element of $\mu_2(\Kbar) \subseteq \mu_2(\Lbar)$, namely the constant map $\omega\mapsto e_2(P, \{\omega_0,\omega_1\})$.  Therefore we obtain a map $e\colon J[2] \to \mu_2(\Lbar)/\mu_2(\Kbar)$ that is independent of the choice of $\omega_0\in\Omega.$  Nondegeneracy and Galois equivariance of the Weil pairing show that $e$ is an injective morphism of $G_K$-modules. On the other hand, $\sum_{\omega \in \Omega} \{ \omega, \omega_0 \} = 0 \in J[2]$. So $e$ fits into a short exact sequence, 
		\begin{equation}\label{eq:definee}
			0 \to J[2] \stackrel{e}\to \mu_2(\Lbar)/\mu_2(\Kbar) \stackrel{\Norm_{L/K}}\longrightarrow \mu_2(\Kbar) \to 1\,.
		\end{equation}
		
		\begin{remark}\label{rem:oddSES}
			When $C$ is odd we may take $\omega_0$ to be the ramification point $\infty_C \in C(K)$. The identification of $L_\circ \subseteq L$ as the subalgebra of elements taking the value $1$ at $\infty_C$ then induces a canonical isomorphism of short exact sequences of $G_K$-modules:		\[
			\xymatrix{
				0\ar[r] 
				&J[2] \ar@{^{(}->}[r]^{e_{\infty_C}} \ar@{=}[d]
				&\mu_2(\Lbar_\circ) \ar[rr]^{\Norm_{L_\circ/K}}\ar[d]^{\cong}
				&&\mu_2(\Kbar) \ar@{=}[d] \ar[r]
				&1\\
				0\ar[r]
				&J[2] \ar@{^{(}->}[r]^{e} 
				&\frac{\mu_2(\Lbar)}{\mu_2(\Kbar)}\ar[rr]^{\Norm_{L/K}}
				&&\mu_2(\Kbar) \ar[r]
				&1\,.
			}
		\]
		\end{remark}
		
		Applying Galois cohomology to~\eqref{eq:definee} gives an exact sequence,
		\begin{equation}\label{eq:estar}
			\xymatrixcolsep{2pc}\xymatrix{
			\mu_2(K) \ar[r] &\HH^1(K,J[2]) \ar[r]^-{e_*}& 
			\HH^1(K,\mu_2(\Lbar)/\mu_2(\Kbar)) \ar[rr]^-{(\Norm_{L/K})_*}&& 
			\HH^1(K,\mu_2) \,.
			}
		\end{equation} 
		
		If $D \in \Div^1(\overline{C})$ is any divisor of degree $1$ on $C$, then the $1$-cocycle sending $\sigma \in G_K$ to $[\sigma(D) - D] \in \Pic^0(\overline{C}) = J(\Kbar)$ represents the class in $\HH^1(K,J)$ of the torsor $\Pic^1_C$ parameterizing divisor classes of degree $1$. Choosing $D = \omega$ for some $\omega \in \Omega$ gives a cocycle taking values in $J[2]$, whose class in $\HH^1(K,J[2])$ does not depend on the choice for $\omega$. We will abuse notation slightly by denoting this class in $\HH^1(K,J[2])$ also by $\Pic^1_C$. One can then check that $-1$ maps to $\Pic^1_C$ under the map $\mu_2(K) \to \HH^1(K,J[2])$ in \eqref{eq:estar} (cf. \cite[Lemma 9.1]{PS-descent}).
		
		\begin{lemma}\label{lem:oddpartition}
			The following are equivalent:
			\begin{enumerate}
				\item The class of $\Pic^1_C$ in $\HH^1(K,J[2])$ is trivial.
				\item \label{oddpartition}$\Omega$ admits an unordered $G_K$-stable partition into two sets of odd cardinality.
				\item $[\mm] \in 2\Pic_C(K)$.
			\end{enumerate}
		\end{lemma}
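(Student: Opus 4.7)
My plan is to establish the equivalences as $(1) \Leftrightarrow (2)$ using the cohomological exact sequence~\eqref{eq:estar}, and $(2) \Leftrightarrow (3)$ by constructing explicit divisor classes from partitions. The key inputs used throughout are the identity $2[\alpha] = [\mm]$ in $\Pic\Cbar$ for every $\alpha \in \Omega$ (both sides being the pullback along $\pi$ of a degree-one divisor on $\PP^1$), together with $\sum_{\alpha \in \Omega}[\alpha] = \tfrac{|\Omega|}{2}[\mm]$, which follows from the divisor of $y$.

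For $(1) \Leftrightarrow (2)$, I would invoke the paper's remark following~\eqref{eq:estar} that $\Pic^1_C \in \HH^1(K,J[2])$ is the image of $-1 \in \mu_2(K)$ under the connecting map. By exactness, (1) holds iff $-1$ lies in the image of the norm $(\mu_2(\Lbar)/\mu_2(\Kbar))^{G_K} \to \mu_2(K)$. An element of $\mu_2(\Lbar)/\mu_2(\Kbar)$ is a map $\Omega \to \{\pm 1\}$ modulo overall sign, equivalently an unordered partition $\{T,\Omega \setminus T\}$ of $\Omega$; Galois-invariance of the class amounts to $G_K$-stability of the partition. Since $|\Omega| = 2g + 2$ is even, the norm value $(-1)^{|T|}$ is unchanged under $T \leftrightarrow \Omega \setminus T$ and equals $-1$ exactly when both parts of the partition have odd cardinality---condition~(2).

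For $(2) \Rightarrow (3)$, given a $G_K$-stable partition $\{T,\Omega \setminus T\}$ with $|T|$ odd, I would set
\[
  D := \sum_{\alpha \in T}[\alpha] - \tfrac{|T|-1}{2}[\mm] \in \Pic \Cbar,
\]
which has degree one and satisfies $2D = |T|[\mm] - (|T|-1)[\mm] = [\mm]$. A direct calculation using the two identities highlighted above shows that substituting $\Omega \setminus T$ for $T$ yields the same class $[D]$, so $[D]$ is intrinsic to the unordered partition and therefore $G_K$-stable, giving an element of $\Pic_C(K)$ with $2[D] = [\mm]$. For the reverse $(3) \Rightarrow (2)$, given $\lambda \in \Pic_C(K)$ with $2\lambda = [\mm]$, fix $\omega \in \Omega$; since $2[\omega] = [\mm] = 2\lambda$, the class $P := [\omega] - \lambda$ lies in $J[2](\Kbar)$. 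Using the identification of $J[2]$ with even subsets of $\Omega$ modulo complements, via $S \mapsto \sum_S[\alpha] - \tfrac{|S|}{2}[\mm]$, one checks (absorbing the term $2[\omega]$ as $[\mm]$) that $\lambda = \sum_T[\alpha] - \tfrac{|T|-1}{2}[\mm]$ in $\Pic\Cbar$, where $T := S \triangle \{\omega\}$ has odd cardinality. Swapping $S$ with its complement replaces $T$ with $\Omega \setminus T$, so the unordered partition $\{T,\Omega \setminus T\}$ is intrinsic. Galois-stability of $\lambda$ then forces $\sigma(\lambda) - \lambda = [\sigma(T) \triangle T]$ to vanish in $J[2]$ for every $\sigma$, so $\sigma(T) \triangle T \in \{\emptyset, \Omega\}$, i.e.\ $\sigma(T) \in \{T, \Omega \setminus T\}$, which is~(2).

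The main care is required in $(3) \Rightarrow (2)$, where one must keep careful track of the ``modulo complements'' identification of $J[2]$ with subsets of $\Omega$, verifying that the odd-cardinality subset $T$ extracted from $\lambda$ is well-defined as an unordered partition and that Galois-stability of $\lambda$ allows only the two possibilities $\sigma(T) = T$ or $\sigma(T) = \Omega \setminus T$. Once this identification is properly managed, the three conditions close up cleanly into a cycle.
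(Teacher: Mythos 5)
Your proof is correct. Note first that the paper does not give an internal argument here at all: its ``proof'' is the single line ``See \cite[Lemma 11.2]{PS-descent}'', so you have supplied a self-contained demonstration of a fact the authors outsource. Your route is the natural one within the paper's own framework and all the steps check out: for $(1)\Leftrightarrow(2)$ you correctly combine the paper's observation that $\Pic^1_C$ is the image of $-1$ under the connecting map in~\eqref{eq:estar} with the identification of $(\mu_2(\Lbar)/\mu_2(\Kbar))^{G_K}$ with $G_K$-stable unordered partitions of $\Omega$, and the parity computation $\Norm(\chi)=(-1)^{|T|}$ (well defined because $|\Omega|=2g+2$ is even) is exactly what is needed. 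For $(2)\Rightarrow(3)$ the verification that $D$ and the class built from the complementary set agree follows from $2D=[\mm]=D+D'$, as you indicate, and for $(3)\Rightarrow(2)$ the identity $\sigma(\lambda)-\lambda=[\sigma(T)\triangle T]$ together with the bijectivity of the ``even subsets mod complements'' description of $J[2]$ does force $\sigma(T)\in\{T,\Omega\setminus T\}$; since $|T|$ is odd and $|\Omega|$ is even, both cells are nonempty, so one genuinely obtains a partition. The only thing your write-up leaves implicit is the paper's convention that $\Pic_C(K)$ is identified with $(\Pic\Cbar)^{G_K}$, which is needed to pass between ``Galois-fixed class'' and ``element of $\Pic_C(K)$'' in both directions of $(2)\Leftrightarrow(3)$; the paper states this bijection in its notation section, so this is a presentational point rather than a gap.
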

		
		\begin{proof}
			See \cite[Lemma 11.2]{PS-descent}
		\end{proof}
		
		\begin{remark}
			Note that these equivalent conditions are trivially satisfied when $C$ is odd. When $C$ is even they occur if and only if $f(x)$ has a factor of odd degree or if the genus of $C$ is even and there exists a quadratic extension $F$ of $K$ such $f(x)$ is the norm of a polynomial in $F[x]$.
		\end{remark}

		Combining~\eqref{eq:estar} with the Galois cohomology of
		\[
			1 \to \mu_2(\Kbar) \to \mu_2(\Lbar) \stackrel{q}\to \mu_2(\Lbar)/\mu_2(\Kbar) \to 1\,.
		\]
		we obtain a commutative diagram with exact rows and columns,
		\begin{equation}\label{diag:fakedescentsetup}
			\xymatrix{
				&\frac{L^\times}{K^\times L^{\times2}} \ar[rr]^{\Norm_{L/K}}\ar@^{^{(}->}[d]^{q_*}
				&&\frac{K^\times}{K^{\times 2}}\ar@{=}[d]\\
				\frac{\HH^1(K,J[2])}{\langle \Pic^1_C \rangle} \ar@^{^{(}->}[r]^{e_*} \ar[dr]_{\Upsilon}
				&\HH^1\left(K,\frac{\mu_2(\Lbar)}{\mu_2(\Kbar)}\right) \ar[rr]^{(\Norm_{L/K})_*}\ar[d]
				&& {\HH^1(K, \mu_2)}
				\\
				& \Br K[2]
			}
		\end{equation}
		The map labelled $\Upsilon$ sends $\xi \in \HH^1(K,J[2])$ to the image of $\xi \cup \Pic^1_C$ under the map 
		\[
			\HH^1(K,J[2]\otimes J[2]) \to \HH^2(K,\mu_2) = \Br K[2]
		\] induced by the Weil pairing \cite[Proposition 10.3]{PS-descent}. Exactness at the central term of~\eqref{diag:fakedescentsetup} implies the existence of an exact sequence
		\begin{equation}
			\label{eq:define_d}
			1 \to \mathfrakL_1 \stackrel{d}\to \frac{\HH^1(K,J[2])}{\langle \Pic^1_C \rangle} \stackrel{\Upsilon} \to \Br K[2]\,.
		\end{equation}
		The exact sequence of $K$-group schemes
		\[
			0 \to \Pic^0_C \to \Pic_C \stackrel{\deg}{\to} \Z \to 0
		\]
		induces an isomorphism $\HH^1(K,J)/\langle \Pic^1_C\rangle \simeq \HH^1(K,\Pic_C)$. So, composing with $d$, the inclusions $J[2] \subseteq J = \Pic^0_C \subseteq \Pic_C$ induce maps from $\mathfrakL_1$ to $\HH^1(K,J)/\langle \Pic^1_C\rangle$ and to $\HH^1(K,\Pic_C)$. By abuse of notation we will use $d$ to denote any of these three maps. 
		The following proposition, due to Poonen and Schaefer, relates~\eqref{eq:define_d} to the Kummer sequence~\eqref{eq:Kummer}.
		\begin{prop}
			\label{prop:descent}
			The composition $d\circ(x-\alpha)$ and the {connecting homomorphism} $\delta$ in~\eqref{eq:Kummer} define the same map $\Pic^0C \to \HH^1(K,J[2])/\langle \Pic^1_C\rangle$.
		\end{prop}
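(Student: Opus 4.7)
The plan is to follow the strategy of \cite[\S 11]{PS-descent}: unwind both $\delta$ and $d \circ (x-\alpha)$ into explicit cocycles in $Z^1(K, J[2])$, and verify that they differ by a coboundary together with a cocycle representing $\Pic^1_C$. Fix $[D] \in \Pic^0 C$ and choose a $K$-rational divisor $D$ representing it that is supported away from $\Omega \cup \pi^{-1}(\infty)$ (possible by modifying $D$ by a principal divisor, as in the proof of Proposition~\ref{prop:complex}). Since $[D]$ is $G_K$-invariant, pick $E \in \Div(\overline{C})$ and $g \in \kk(\overline{C})^\times$ with $\divv(g) = D - 2E$. The connecting map $\delta$ in~\eqref{eq:Kummer} is then represented by
\[
c_\delta \colon \sigma \mapsto [\sigma(E) - E] \in J[2](\Kbar),
\]
since $2[\sigma(E) - E] = [\sigma(D) - D] = 0$.

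For $d \circ (x-\alpha)$, note that $(x-\alpha)([D]) = \prod(x(P)-\alpha)^{n_P} \in L^\times$ has norm $\prod y(P)^{2n_P}/c^{\sum n_P}$, which is a square in $K^\times$, so its class lies in $\mathfrakL_1$. By the definition of $d$ through $q_*$ in~\eqref{diag:fakedescentsetup}, for any square root $m \in \Lbar^\times$ of $(x-\alpha)([D])$, the class $d((x-\alpha)([D]))$ is the preimage under the injection $e_*$ of the cocycle $\sigma \mapsto \sigma(m)/m \in \mu_2(\Lbar)/\mu_2(\Kbar)$.

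To produce a useful $m$, apply Weil reciprocity on each component $\overline{C}_\omega \subseteq \overline{C}_L$ to the functions $g$ (pulled back from $\overline{C}$) and $x - x(\omega)$, whose divisors are $D - 2E$ and $2\omega - \mm$ respectively; choosing $E$ disjoint from $\Omega \cup \pi^{-1}(\infty)$, this yields
\[
\prod_P (x(P) - x(\omega))^{n_P} = \frac{g(\omega)^2 \cdot ((x - x(\omega))(E))^2}{g(\mm)}
\]
for each $\omega \in \Omega$, where $g(\mm) = g(\infty^+)g(\infty^-) \in \Kbar^\times$. Fixing $\sqrt{g(\mm)} \in \Kbar^\times$, I would take $m \in \Lbar^\times \cong \Kbar^{\Omega}$ to be the tuple whose $\omega$-component is $g(\omega)(x - x(\omega))(E)/\sqrt{g(\mm)}$. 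Computing $\sigma(m)/m$ componentwise then reduces to the way $g$ transforms under $\sigma$ at the ramification points, which matches $e(\sigma(E) - E)$ via the explicit Weil pairing formula $e_2(P, Q) = h_P(D_Q)/h_Q(D_P)$ applied to $h_P = g/\sigma(g)$, $D_P = \sigma(E) - E$, $D_Q = \omega + \omega_0 - \mm$, and $h_Q = (x - x(\omega))(x - x(\omega_0))$.

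The main obstacle will be the cocycle-level bookkeeping: tracking the sign ambiguities arising from the choice of $\sqrt{g(\mm)}$ and from the base point $\omega_0$ used in defining $e = e_{\omega_0}$ modulo $\mu_2(\Kbar)$, and verifying that these ambiguities are precisely absorbed by a coboundary together with the cocycle representing $\Pic^1_C$. Once this alignment is verified, the identity $e(c_\delta(\sigma)) = \sigma(m)/m$ holds in $\mu_2(\Lbar)/\mu_2(\Kbar)$ modulo $\langle \Pic^1_C\rangle$, which proves the proposition.
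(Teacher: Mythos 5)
Your strategy is sound, but it is worth knowing that the paper does not prove Proposition~\ref{prop:descent} at all: it attributes the result to Poonen--Schaefer and simply cites \cite{PS-descent}*{Theorem 9.4} for the even case and \cite{Schaefer-descent}*{Theorem 1.1} for the odd case. What you have written is, in effect, a reconstruction of the argument in those references rather than of anything in this paper. Your reconstruction has the right skeleton: representing $\delta([D])$ by $\sigma \mapsto [\sigma(E)-E]$ with $\divv(g) = D - 2E$; checking $\Norm_{L/K}\bigl((x-\alpha)([D])\bigr) = \prod_P \bigl(f(x(P))\bigr)^{n_P} \in K^{\times 2}$ using $\deg D = 0$; unwinding $d$ through $q_*$ and $e_*$ in~\eqref{diag:fakedescentsetup}; and, crucially, using Weil reciprocity applied to $g$ and $x - x(\omega)$ (with $\divv(x-x(\omega)) = 2\omega - \mm$) to manufacture an explicit square root $m$ of $(x-\alpha)([D])$ whose coboundary is computed by the divisorial formula for the Weil pairing. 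This is precisely the mechanism in \cite{Schaefer-descent} and \cite{PS-descent}*{\S 9}. Two caveats. First, the existence of $E$ comes from divisibility of $J(\Kbar)$, not from $G_K$-invariance of $[D]$ as you state; minor, but worth fixing. Second, the part you defer --- showing that the $\sqrt{g(\mm)}$ and base-point $\omega_0$ ambiguities are absorbed exactly by $\mu_2(\Kbar)$-coboundaries and the class of $\Pic^1_C$ --- is where the real content of the even case lives (it is the reason the target is $\HH^1(K,J[2])/\langle \Pic^1_C\rangle$ rather than $\HH^1(K,J[2])$, and the reason the paper must route through $\mu_2(\Lbar)/\mu_2(\Kbar)$). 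As written your proposal identifies the issue correctly but does not discharge it; to make this a complete proof you would need to carry out that verification, or else do what the paper does and cite the references where it is done.
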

		
		\begin{cor}
			\label{cor:descent}
			There is an exact sequence
			\[
				\Pic^0C \stackrel{x-\alpha}\To \mathfrakL_1 \stackrel{d}\to \HH^1(K,\Pic_C)\,.
			\]
		\end{cor}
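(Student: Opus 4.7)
My plan is to deduce exactness from Proposition~\ref{prop:descent}, the Kummer sequence~\eqref{eq:Kummer}, and the injection~\eqref{eq:define_d}, reducing the corollary to a lifting question for Galois-fixed divisor classes.

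First I would verify the complex property. For $D\in\Pic^0C$, Proposition~\ref{prop:descent} identifies $d((x-\alpha)(D))$ with the image of the Kummer class $\delta([D])$ in $\HH^1(K,J[2])/\langle\Pic^1_C\rangle$. Since $\delta$ lands in the kernel of the natural map $\HH^1(K,J[2])\to\HH^1(K,J)$ by exactness of~\eqref{eq:Kummer}, and since $\HH^1(K,\Pic_C)\cong\HH^1(K,J)/\langle\Pic^1_C\rangle$ via the degree sequence $0\to\Pic^0_C\to\Pic_C\to\Z\to 0$, we conclude $d((x-\alpha)(D))=0$ in $\HH^1(K,\Pic_C)$.

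For the converse, let $\ell\in\mathfrakL_1$ with $d(\ell)=0$ in $\HH^1(K,\Pic_C)$. Under the injection~\eqref{eq:define_d}, $d(\ell)\in\HH^1(K,J[2])/\langle\Pic^1_C\rangle$ has trivial image in $\HH^1(K,J)/\langle\Pic^1_C\rangle$. The double cover $\pi:C\to\PP^1$ supplies a $K$-rational divisor of degree $2$ on $C$ (namely $\mm = \pi^*(\infty)$), so $2\Pic^1_C=\Pic^2_C=0$ in $\HH^1(K,J)$; in particular $\Pic^1_C$ is $2$-torsion there. Coupled with exactness of the Kummer sequence, this identifies the kernel of $\HH^1(K,J[2])/\langle\Pic^1_C\rangle\to\HH^1(K,J)/\langle\Pic^1_C\rangle$ with the image of the induced map $\bar\delta:J(K)/2J(K)\to\HH^1(K,J[2])/\langle\Pic^1_C\rangle$. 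So $d(\ell)=\bar\delta(P)$ for some $P\in J(K)/2J(K)$.

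The crux, which I expect to be the main obstacle, is then to lift $P$ to a class $D\in\Pic^0C$ with $[D]\equiv P\bmod 2J(K)$: the map $\Pic^0C\to J(K)$ is injective but has cokernel a priori nontrivial in $\Br K$, so surjectivity of $\Pic^0C/2\Pic^0C\to J(K)/2J(K)$ is not automatic. I would attempt to control this obstruction by working with the explicit cocycle $\xi_\ell$ of Proposition~\ref{prop:xi}. The vanishing $[\xi_\ell]=0$ in $\HH^1(K,\Pic_C)$ yields $Q\in\Pic\Cbar$ with $\sigma Q-Q=\xi_\ell(\sigma)$, and because $\xi_\ell$ is supported on $\Omega$ and the $K$-rational divisor $\mm$, one should be able to choose $Q$ so that $2Q$ descends to a $K$-rational divisor class, possibly after replacing $P$ by $P+P_0$ where $\delta(P_0)=\Pic^1_C$ in the case such a $P_0$ exists. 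Once $D$ is obtained, Proposition~\ref{prop:descent} forces $d((x-\alpha)D)=\bar\delta([D])=\bar\delta(P)=d(\ell)$, and injectivity of $d$ in~\eqref{eq:define_d} then yields $(x-\alpha)D=\ell$ in $\mathfrakL_1$, completing the proof.
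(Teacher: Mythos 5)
Your reduction is sound up to the point you yourself flag as the crux, and you have located the difficulty accurately; but the proposal does not close that gap, so as written it is not a proof. Note also that the paper does not take your route: it treats the corollary as immediate because the exactness of $\Pic^0C \to \mathfrakL_1 \to \HH^1(K,J[2])/\langle\Pic^1_C\rangle$ is imported wholesale from the fake-descent machinery of \cite{PS-descent}*{Theorem 9.4} (and \cite{Schaefer-descent}*{Theorem 1.1} in the odd case) --- the same sources cited for Proposition~\ref{prop:descent}. The surjectivity of $\Pic^0C/2\Pic^0C$ onto the relevant part of $J(K)/2J(K)$ that worries you is part of that cited input, not something the paper re-proves.

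The gap itself: ``one should be able to choose $Q$ so that $2Q$ descends to a $K$-rational divisor class'' is an assertion, and the reason you offer (that $\xi_\ell$ is supported on $\Omega$ and $\mm$) is not the right one --- the support of the cocycle has no bearing on whether a Galois-fixed class on $\Cbar$ is represented by a $K$-rational divisor. What actually closes the argument is this: after normalizing the degree of $Q$ by subtracting multiples of $[\mm]$ and possibly one $[\omega_0]$ (the latter shifts the cocycle by $\Pic^1_C$, which is harmless modulo $\langle\Pic^1_C\rangle$), you get $\sigma Q - Q = \xi''_\ell(\sigma) \in J[2]$, so $2Q$ is Galois-fixed. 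Its obstruction to lying in $\Pic C$ is $\theta(2Q)=2\theta(Q)$, where $\theta:\Pic_C(K)\to\Br K$ has image in $\ker(\Br K\to \Br C)$; that kernel is killed by $2$ because $C$ carries a closed point of degree dividing $2$ (the fibre of $\pi$ over a rational point of $\PP^1$), so $2\theta(Q)=0$ and $2Q\in\Pic C$ automatically, for \emph{any} choice of $Q$. Subtracting a multiple of $[\mm]\in\ker(x-\alpha)$ then lands you in $\Pic^0C$, and your final step (Proposition~\ref{prop:descent} plus injectivity of $d$ from~\eqref{eq:define_d}) goes through. Without this observation, or an appeal to the exactness already established in \cite{PS-descent}, the proof is incomplete.
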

		
		\begin{proof}[Proof of Proposition~\ref{prop:descent}]
			See \cite[Theorem 9.4]{PS-descent} when $C$ is even and \cite[Theorem 1.1]{Schaefer-descent} when $C$ is odd (see Remark~\ref{rem:oddSES}).	
		\end{proof}
		
		The following lemma gives an explicit description of the map $d$.
		\begin{lemma}
			\label{lem:described'}
			Suppose $\ell \in L^\times$ represents a class $\overline{\ell} \in \mathfrakL_1$ and let $\tilde{\chi}_\ell, g_\ell \in C^1(K,\Z)$ be as in~\eqref{defineg}. Then $d(\overline{\ell})$ is represented by the $1$-cocycle $\xi''_\ell \in Z^1(K,J[2])$ defined by
			\[
				\xi''_\ell(\sigma) = \left(\sum_{\omega \in \Omega}\tilde\chi_\ell(\sigma)_\omega[\omega]\right) - g_\ell(\sigma)[\mm]\\.
			\]
		\end{lemma}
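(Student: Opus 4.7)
The strategy is to unwind the definition of $d$ from diagram~\eqref{diag:fakedescentsetup}: since $\overline{\ell} \in \mathfrakL_1$ has trivial norm, $q_*(\overline{\ell})$ lies in $\ker((\Norm_{L/K})_*) = \im(e_*)$ by exactness of~\eqref{eq:estar}, and $d(\overline{\ell}) \in \HH^1(K,J[2])/\langle \Pic^1_C\rangle$ is the unique preimage. So the task is to verify that $\xi''_\ell$ is a well-defined element of $Z^1(K,J[2])$ and that $e_*[\xi''_\ell] = q_*(\overline{\ell})$.

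First, I would check that $\xi''_\ell(\sigma) \in J[2]$ for every $\sigma$. Since $\overline{\ell}\in\mathfrakL_1$, $\Norm_{L/K}(\ell) \in K^{\times 2}$, so $\chi_a \equiv 1$ and $\tilde\chi_a \equiv 0$. The defining relation~\eqref{defineg} then gives $\sum_\omega \tilde\chi_\ell(\sigma)_\omega = 2g_\ell(\sigma)$, so $\xi''_\ell(\sigma)$ has degree zero. Since $2[\omega] = [\mm]$ in $\Pic C$ for every $\omega \in \Omega$ (each $\omega$ is a ramification point of $\pi$), any degree-zero divisor class of the form $\sum n_\omega[\omega] - g[\mm]$ with $\sum n_\omega = 2g$ is annihilated by $2$, hence lies in $J[2]$. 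For the cocycle condition, observe that when $\tilde\chi_a \equiv 0$ the cochain $\xi''_\ell$ coincides with the image in $\Pic_C$ of the cochain $\xi_\ell$ from Proposition~\ref{prop:xi}. By Lemma~\ref{lem:coboundarycomp}, $\partial\xi_\ell'$ is a principal divisor, so $\xi_\ell$ is a cocycle in $\HH^1(K,\Pic_C)$; since all values lie in $J[2] \subseteq \Pic_C$, the cocycle identity holds in $J[2]$.

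The main step is to compute $e\circ\xi''_\ell$. Fix any $\omega_0 \in \Omega$ and work with the representative $e_{\omega_0}$ of $e$. Under the identification of $J[2]$ with even-cardinality subsets of $\Omega$ modulo complementation, the class $\xi''_\ell(\sigma) = \sum_\omega \tilde\chi_\ell(\sigma)_\omega[\omega] - g_\ell(\sigma)[\mm]$ corresponds to the even subset $S_\sigma := \{\omega \in \Omega : \tilde\chi_\ell(\sigma)_\omega = 1\}$ of cardinality $2g_\ell(\sigma)$. The paper's description of the Weil pairing on subsets then gives
\[
e_{\omega_0}(\xi''_\ell(\sigma))(\omega) = (-1)^{|S_\sigma \cap \{\omega,\omega_0\}|} = (-1)^{\tilde\chi_\ell(\sigma)_\omega + \tilde\chi_\ell(\sigma)_{\omega_0}} = \chi_\ell(\sigma)(\omega)\cdot\chi_\ell(\sigma)(\omega_0).
\]
The factor $\chi_\ell(\sigma)(\omega_0) \in \mu_2(\Kbar)$ is constant in $\omega$ and is therefore killed by the quotient map $q : \mu_2(\Lbar) \to \mu_2(\Lbar)/\mu_2(\Kbar)$. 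Hence $e(\xi''_\ell(\sigma)) = q(\chi_\ell(\sigma))$ pointwise on $G_K$, so $e_*[\xi''_\ell] = q_*[\chi_\ell] = q_*(\overline{\ell})$. By the definition of $d$ via~\eqref{diag:fakedescentsetup}, this yields $d(\overline{\ell}) = [\xi''_\ell]$ in $\HH^1(K,J[2])/\langle \Pic^1_C\rangle$.

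The only mild subtlety is matching the divisor-theoretic expression for $\xi''_\ell(\sigma)$ with its subset representation so that the Weil pairing formula applies; once this bookkeeping is done (using $2[\omega]=[\mm]$ and well-definedness modulo complements, which relies on $|S_\sigma|$ being even—that is, on $\tilde\chi_a\equiv 0$), the rest is routine.
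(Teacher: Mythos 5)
Your proposal is correct and follows essentially the same route as the paper: both reduce to showing that $e_{\omega_0}(\xi''_\ell(\sigma))$ and $\chi_\ell(\sigma)$ agree modulo $\mu_2(\Kbar)$, by rewriting $\xi''_\ell(\sigma)$ as the even subset $\{\omega : \tilde\chi_\ell(\sigma)_\omega = 1\}$ and evaluating the Weil pairing as an intersection parity, obtaining $\chi_\ell(\sigma)_\upsilon\chi_\ell(\sigma)_{\omega_0}$ with the constant factor killed in the quotient. Your added verification that $\xi''_\ell$ takes values in $J[2]$ and is a cocycle (via Lemma~\ref{lem:coboundarycomp}, since $\tilde\chi_a\equiv 0$ makes $\xi''_\ell$ coincide with $\xi_\ell$) is a harmless supplement to what the paper leaves implicit.
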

		
		\begin{proof}
			The map $q_*$ in diagram~\eqref{diag:fakedescentsetup} sends the class of $\ell$ to the class represented by $\chi_{\ell}$, while $e_*$ is induced by the map in~\eqref{eq:definee}, itself induced by the map $e_{\omega_0}$ of~\eqref{map:definee}. To prove the lemma it is enough to show that, for every $\sigma\in G_K$, $e_{\omega_0}(\xi''_\ell(\sigma))$ and $\chi_\ell(\sigma)$ define the same element of $\mu_2(\Lbar)/\mu_2(\Kbar)$. 
			
			For any $\sigma \in G_K$, 
			\[
				g_\ell(\sigma)[\mm] 
				= g_\ell(\sigma)[2\omega_0] 
				= 2g_\ell(\sigma)[\omega_0] 
				= \sum_{\omega \in \Omega}\tilde\chi_\ell(\sigma)_\omega[\omega_0]\,.
			\]
			Since $[\omega] - [\omega_0] = \{\omega,\omega_0\}$, we may thus rewrite $\xi_\ell''(\sigma)$ as
			\[
				\xi''_\ell(\sigma) = \sum_{\omega \in \Omega}\tilde\chi_\ell(\sigma)_\omega\{\omega,\omega_0\}\\.
			\]
			
			Now let $\upsilon \in \Omega$. We may express $e_{\omega_0}(\xi''_\ell(\sigma))(\upsilon) =
			e_2(\xi''_\ell(\sigma), \{\upsilon,\omega_0\})$ as follows
			\[
			e_2(\xi''_\ell(\sigma), \{\upsilon,\omega_0\})
			= \prod_{\omega \in \Omega}e_2(\{\omega,\omega_0\},\{\upsilon,\omega_0\})^{\tilde\chi_\ell(\sigma)_\omega} 			= \prod_{\omega \ne \upsilon,\omega_0}e_2(\{\omega,\omega_0\},\{\upsilon,\omega_0\})^{\tilde\chi_\ell(\sigma)_\omega}.
			\]
			Observing that $e_2(\{\omega, \omega_0\},\{\upsilon,\omega_0\}) = - 1$ unless $\omega = \upsilon$, $\omega = \omega_0$ or $\upsilon = \omega_0$, it follows that
			\[
			e_{\omega_0}(\xi''_\ell(\sigma))(\upsilon) =
						e_2(\xi''_\ell(\sigma), \{\upsilon,\omega_0\})
						= \prod_{\omega \ne \upsilon,\omega_0}\chi_\ell(\sigma)_\omega, \quad \textup{ for any }v\neq \omega_0
			\]
			Finally, we note that $\prod_{\omega \in \Omega}\chi_\ell(\sigma)_\omega = 1$ as $\overline{\ell} \in \mathfrakL_1$ and obtain the desired conclusion, that $e_{\omega_0}(\xi''_\ell(\sigma))(\upsilon) = \chi_\ell(\sigma)_\upsilon\chi_\ell(\sigma)_{\omega_0}.$
		\end{proof}
		
		%%%%%%%%%%%%%%%%%%%%%%%%%%%%%%%%%%%%%%%%%%%%%%%%%%%%%%%%%%%%%%%%%%%%%%%%
		\subsection{The kernel of $(x-\alpha)$}%%%%%%%%%%%%%%%%%%%%%%%
		%%%%%%%%%%%%%%%%%%%%%%%%%%%%%%%%%%%%%%%%%%%%%%%%%%%%%%%%%%%%%%%%%%%%%%%%

		The kernel of $(x-\alpha)$ on the subgroup $\Pic^{(2)}C \subset \Pic C$ consisting of divisors classes of even degree is given by \cite[Theorem 11.3]{PS-descent}. Using this we derive the following.
		
		\begin{prop}
			\label{prop:dimImage}
			The kernel of $x-\alpha\,:\,\frac{\Pic C}{2 \Pic C} \to \frak{L}_c$ is generated by $\Pic C \cap 2\Pic_C(K)$ and the $K$-rational divisors on $C$ lying above $\infty \in \PP^1$. In particular, if $\Pic C = \Pic_C(K)$, then
			\[
				\ker\left(x-\alpha\right) = 
				\begin{cases}
					\langle [\mm] \rangle & \text{ if $c \notin K^{\times 2}$,}\\
					\langle [\mm], [\infty^+] \rangle & \text{ if $c \in K^{\times 2}$.}
				\end{cases}	
			\]
		\end{prop}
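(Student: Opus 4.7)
The plan is to derive the proposition from \cite[Theorem 11.3]{PS-descent}, which describes $\ker(x-\alpha)$ on the even-degree subgroup $\Pic^{(2)}C/2\Pic C$; since that subgroup lands in $\mathfrakL_1\subseteq\mathfrakL_c$, the only additional work concerns odd-degree classes.

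First, I would compute the composite
\[
\Pic C/2\Pic C \stackrel{x-\alpha}\To \mathfrakL_c \stackrel{\Norm_{L/K}}\To K^\times/K^{\times 2}.
\]
For a closed point $P$ of degree $d$ lying outside $\Omega\cup\pi^{-1}(\infty)$, the geometric computation $\Norm_{L/K}(x(P)-\alpha) = f(x(P)) = y(P)^2/c$ (using $y^2=cf(x)$) shows that the composite sends a degree-$d$ class to $c^d \bmod K^{\times 2}$. Consequently, when $c\notin K^{\times 2}$ any class in $\ker(x-\alpha)$ has even degree, and the kernel is contained in $\Pic^{(2)}C/2\Pic C$. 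In this case \cite[Theorem 11.3]{PS-descent} identifies the kernel with the subgroup generated by $\Pic^{(2)}C\cap 2\Pic_C(K)$ and $[\mm]$, which matches the claim (the only $K$-rational divisor above $\infty$ is $[\mm]$, since $\infty^+,\infty^-$ are Galois conjugate).

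Next I would treat the case $c\in K^{\times 2}$, where $\mathfrakL_c=\mathfrakL_1$ and both $\infty^+,\infty^-\in C(K)$. One needs to check that $[\infty^+]$ lies in the kernel, i.e., that it admits a representative $D\in\Div^1 C$ supported away from $\Omega\cup\pi^{-1}(\infty)$ with $(x-\alpha)(D)\in K^\times L^{\times 2}$. After the substitution $y\mapsto y\sqrt{c}$ one may assume $c=1$; then for an appropriate $h(x)\in K[x]$ of degree $g+1$ (with leading coefficient $\pm 1$ equal to the square root of the leading coefficient of $f$), the function $y-h(x)$ has divisor of the form $E - (2g+2)\infty^-$ with $E$ effective of degree $2g+2$ supported on points $(x_0,y_0)$ satisfying $y_0=h(x_0)$; in particular $f(x_0)=h(x_0)^2$ at every such point. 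This exhibits $(2g+2)[\infty^-]\sim E$ with $(x-\alpha)(E)=\prod (x_0-\alpha)$ having norm $\prod h(x_0)^2$ a square, so its class in $\mathfrakL$ is trivial (up to passing to a suitable multiple and using that the kernel of $(x-\alpha)$ on $\Pic^{(2)}C/2\Pic C$ already contains $[\mm]$). Applying the hyperelliptic involution or exchanging $\infty^\pm$ gives the analogous statement for $[\infty^+]$. Combined with \cite[Theorem 11.3]{PS-descent} applied to $\Pic^{(2)}C/2\Pic C$, this produces the advertised generators in the split case.

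The main technical obstacle is this last verification that $(x-\alpha)[\infty^+]=0$ when $c\in K^{\times 2}$: the element $\infty^+$ is not directly in the domain of the pointwise formula, so one must exhibit an explicit linear equivalence with a divisor on which $x-\alpha$ is manifestly a square times a constant. The ``In particular'' statement then follows formally: if $\Pic C=\Pic_C(K)$ then $\Pic C\cap 2\Pic_C(K)=2\Pic C$ vanishes modulo $2\Pic C$, leaving only the $K$-rational divisors above $\infty$, namely $\langle[\mm]\rangle$ or $\langle[\mm],[\infty^+]\rangle$ according to whether $c$ is a square in $K$.
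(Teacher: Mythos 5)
Your overall strategy matches the paper's: quote the even-degree result from \cite{PS-descent}, use the norm computation $\Norm_{L/K}((x-\alpha)(D))\equiv c^{\deg D}$ to rule out odd-degree classes when $c\notin K^{\times 2}$, and then verify $(x-\alpha)([\infty^+])=0$ when $c\in K^{\times 2}$. (The paper's own proof simply asserts this last fact.) Unfortunately, your argument for that key step --- which you correctly identify as the crux --- does not work. The central error is the inference ``$(x-\alpha)(E)=\prod(x_0-\alpha)$ has norm $\prod h(x_0)^2$ a square, so its class in $\mathfrakL$ is trivial.'' Having square norm is very far from implying triviality in $\mathfrakL=L^\times/K^\times L^{\times 2}$: the subgroup of square-norm classes is precisely $\mathfrak{L}_1$, which is typically large (it contains the image of $J(K)/2J(K)$, for instance). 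If this inference were valid it would prove $\mathfrak{L}_1=0$. The correct route is a resultant identity: the affine zeros of $y-h(x)$ project to the roots of $f-h^2$, so $\prod(x_0-\alpha)=\pm\,(f-h^2)(\alpha)/\mathrm{lc}(f-h^2)=\mp\, h(\alpha)^2/\mathrm{lc}(f-h^2)$ (using $f(\alpha)=0$ in $L$), which visibly lies in $K^\times L^{\times 2}$.

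There are two further problems with the same step. First, the divisor of $y-h(x)$ is miscomputed: since $y$ and $h(x)$ each have a pole of order $g+1$ at each point at infinity, one gets $\divv(y-h)=E+(g+1-m)\infty^+-(g+1)\infty^-$ with $m=\deg(f-h^2)$ and $\deg E=m$, not $E-(2g+2)\infty^-$. Second, even granting a relation of the form $N[\infty^-]\sim E$ with $(x-\alpha)([E])=0$, this only yields $N\cdot(x-\alpha)([\infty^-])=0$, which is vacuous in the $2$-torsion group $\mathfrakL$ when $N=2g+2$ is even; one must arrange an \emph{odd} coefficient. Both issues are repaired simultaneously by taking $h$ monic of degree $g+1$ but generic, so that $m=2g+1$: then $\divv(y-h)=E-g\,\infty^+-(g+1)\infty^-$, the resultant identity gives $(x-\alpha)([E])=0$, and combining with $(x-\alpha)([\mm])=0$ yields $(x-\alpha)([\infty^+])=0$. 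With that substitution your proof goes through and agrees with the paper's (which cites \cite[Proposition 11.1]{PS-descent} for the even-degree kernel); the norm computation and the ``In particular'' deduction are fine as written.
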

		
		\begin{proof}
			By~\cite[Proposition 11.1]{PS-descent} the kernel of 
			\[
				x-\alpha : \Pic^{(2)}C \to \frak{L}
			\]
			is generated by $\Pic C \cap 2\Pic_C(K)$ and $[\mm]$. Clearly $(x-\alpha)$ maps divisors of degree $m$ to classes with norm in $c^mK^{\times 2}$. In particular, if $c \notin K^{\times 2}$, then the kernel of $(x-\alpha)$ does not contain any divisor classes of odd degree. On the other hand, if $c \in K^{\times 2}$, then $[\infty^+]$ is defined over $K$ and lies in $\ker(x-\alpha)$. 
		\end{proof}
		
		%%%%%%%%%%%%%%%%%%%%%%%%%%%%%%%%%%%%%%%%%%%%%%%%%%%%%%%%%%%%%%%%%%%%%%%%%%%%
	\section{Proofs of the main theorems}\label{sec:Normc}%%%%%%%%%%%%%%%%%%%%%%
	%%%%%%%%%%%%%%%%%%%%%%%%%%%%%%%%%%%%%%%%%%%%%%%%%%%%%%%%%%%%%%%%%%%%%%%%%%%%

		For $n \ge 2$ define
		\begin{align*}
			\Br_nC &= \left\{ \calA \in \Br C \,:\, h(\calA) \in \textup{image}\left(\HH^1(K,J[n]) \to \HH^1(K,\Pic_C)\right) \right\}\,, \text{ and }\\
			\Br_2^\Upsilon C &= \left\{ \calA \in \Br C \,:\, h(\calA) \in \textup{image}\left(\ker(\Upsilon) \to \HH^1(K,\Pic_C)\right) \right\}\,,
		\end{align*}
		where $h : \Br C \to \HH^1(K,\Pic_C)$ is as in~\eqref{diag:defineh}, $\Upsilon:\HH^1(K,J[2]) \to \Br K[2]$ is as in~\eqref{eq:define_d} and the map $\HH^1(K,J[n]) \to \HH^1(K,\Pic_C)$ is induced by the inclusion $J[n] \subseteq J = \Pic^0_C \subseteq \Pic_C$.
		
		\begin{prop}
			\label{prop:L1exact}
			There is an exact sequence
			\[
				\Pic^0C \stackrel{x-\alpha}\To \mathfrakL_1\stackrel{\gamma}\To \Br_2^\Upsilon C/\Br_0 C \to 0\,.
			\]
		\end{prop}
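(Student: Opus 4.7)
The plan is to deduce the proposition from the exact sequence of Corollary~\ref{cor:descent} together with the description of $\ker(\Upsilon)$ in~\eqref{eq:define_d}. The key bridge is the identity
\[
    h_0 \circ \gamma \;=\; d \colon \mathfrakL_1 \To \HH^1(K, \Pic_C),
\]
where $d$ is the map of Corollary~\ref{cor:descent} and $h_0$ is the injection $\Br C/\Br_0 C \hookrightarrow \HH^1(K,\Pic_C)$ induced by the top row of~\eqref{diag:defineh}. Granting this, the proposition reduces to a short diagram chase exploiting the injectivity of $h_0$.

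The identification $h_0 \circ \gamma = d$ on $\mathfrakL_1$ should be immediate from the cocycle formulas established in Sections~\ref{sec:hofgamma} and~\ref{sec:2descent}. By Proposition~\ref{prop:xi}, the class $(h\circ\gamma')(\overline\ell)$ in $\HH^1(K,\Pic_C)$ is represented by the cocycle $\xi_\ell$ of~\eqref{eq:defxi}, while by Lemma~\ref{lem:described'} the class $d(\overline\ell)$ is represented by $\xi''_\ell$. When $\overline\ell \in \mathfrakL_1$ we have $a := \Norm_{L/K}(\ell) \in K^{\times 2}$, so $\tilde\chi_a \equiv 0$ and the $\tilde\chi_a(\sigma)[\infty^+]$ contribution to $\xi_\ell$ disappears; thus $\xi_\ell = \xi''_\ell$ literally as $\Pic_C$-valued cochains, yielding the desired equality of classes.

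With this identity in hand, the three assertions follow in turn. \emph{(i) Complex.} The computation in Proposition~\ref{prop:complex} shows that the $(x-\alpha)$-symbol of a closed point of degree $d$ has norm $c^d$ modulo squares, so $(x-\alpha)$ carries $\Pic^0 C$ into $\mathfrakL_1$; vanishing of $\gamma\circ(x-\alpha)$ then follows from Theorem~\ref{thm:MainComplex}. \emph{(ii) Exactness at $\mathfrakL_1$.} Injectivity of $h_0$ yields $\ker(\gamma|_{\mathfrakL_1}) = \ker(d|_{\mathfrakL_1})$, which by Corollary~\ref{cor:descent} equals the image of $\Pic^0 C$. \emph{(iii) Image.} By~\eqref{eq:define_d}, $d(\mathfrakL_1) = \ker(\Upsilon)$ in $\HH^1(K,J[2])/\langle\Pic^1_C\rangle$, so $h(\gamma(\mathfrakL_1))$ is precisely the image of $\ker(\Upsilon)$ in $\HH^1(K,\Pic_C)$, giving $\gamma(\mathfrakL_1) \subseteq \Br_2^\Upsilon C/\Br_0 C$. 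Conversely, for $\calA \in \Br_2^\Upsilon C$ we have $h(\calA) \in d(\mathfrakL_1) = h(\gamma(\mathfrakL_1))$, and injectivity of $h_0$ forces $\calA = \gamma(\overline\ell)$ in $\Br C/\Br_0 C$ for some $\overline\ell \in \mathfrakL_1$.

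No step is substantially difficult: the hard technical work has already been carried out in Sections~\ref{sec:hofgamma} and~\ref{sec:2descent}. The proposition is essentially an assembly of those pieces, and the only delicate point is the observation that the $[\infty^+]$-term in $\xi_\ell$ vanishes exactly on $\mathfrakL_1$, which is what forces the Brauer-theoretic map $\gamma$ to agree with the descent map $d$ on this subgroup.
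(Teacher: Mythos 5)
Your proposal is correct and follows the paper's own proof exactly: both hinge on the identity $h_0\circ\gamma=d$ on $\mathfrak{L}_1$ (which the paper asserts is "clear" from Proposition~\ref{prop:xi} and Lemma~\ref{lem:described'}, and which you justify by noting $\tilde\chi_a\equiv 0$ so that $\xi_\ell=\xi''_\ell$), then combine the exactness of~\eqref{eq:define_d} with Corollary~\ref{cor:descent} and the injectivity of $h_0$. Your write-up simply makes explicit the details the paper leaves to the reader.
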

		
		\begin{proof}
			From Proposition~\ref{prop:xi} and Lemma~\ref{lem:described'} it is clear that $h_0 \circ \gamma$ and $d$ give the same map $\mathfrakL_1 \to \HH^1(K,\Pic_C)$. Since~\eqref{eq:define_d} is exact, $\im(d) = \ker(\Upsilon)$, so $\gamma(\mathfrakL_1) = \Br_2^\Upsilon C/\Br_0 C$. The exactness stated in the proposition now follows immediately from Corollary~\ref{cor:descent}.
		\end{proof}
			
		\begin{lemma}
			\label{lem:Br_2}
				The index of $\Br_2C/\Br_0C$ in $(\Br C/\Br_0C)[2]$ divides $2$. If $\Pic^1_C(K) \ne \emptyset$ or $\Pic^1_C \notin 2\HH^1(K,J)$, then the index is $1$.
		\end{lemma}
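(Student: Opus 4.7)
The plan is to use the injectivity of $h_0 \colon \Br C/\Br_0 C \hookrightarrow \HH^1(K,\Pic_C)$ from diagram~\eqref{diag:defineh} to translate the comparison into a purely cohomological one. Write $V := h_0(\Br C/\Br_0 C)$ and let $W \subseteq \HH^1(K,\Pic_C)$ denote the image of the natural map $\HH^1(K,J[2]) \to \HH^1(K,\Pic_C)$ induced by $J[2] \subseteq J = \Pic^0_C \subseteq \Pic_C$. By definition, $h_0(\Br_2 C/\Br_0 C) = V \cap W$, and since $W$ clearly lies in $\HH^1(K,\Pic_C)[2]$, one has $h_0((\Br C/\Br_0 C)[2]) = V \cap \HH^1(K,\Pic_C)[2]$. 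A general subgroup estimate then gives
\[
[V \cap \HH^1(K,\Pic_C)[2] : V \cap W] \;\;\text{ divides }\;\; [\HH^1(K,\Pic_C)[2] : W],
\]
so it suffices to bound this latter ambient index by $2$ and to show it is $1$ under the stated hypotheses.

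Next I would identify $\HH^1(K,\Pic_C) \cong \HH^1(K,J)/\langle \Pic^1_C\rangle$ via the short exact sequence $0 \to J \to \Pic_C \to \Z \to 0$ (whose coboundary sends $1 \in \Z$ to $\Pic^1_C$), and use the Kummer sequence for $J$ to identify $W$ with the image of $\HH^1(K,J)[2]$. The key observation is that $\Pic^1_C$ is itself $2$-torsion in $\HH^1(K,J)$: doubling gives $\Pic^2_C$, which is trivial because the hyperelliptic class $[\mm] = [\infty^+ + \infty^-] \in \Pic_C(K)$ furnishes a $K$-rational representative. Hence $\langle \Pic^1_C\rangle \subseteq \HH^1(K,J)[2]$, and $W = \HH^1(K,J)[2]/\langle \Pic^1_C\rangle$.

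The main step is then a direct coset count. Set $G = \HH^1(K,J)$ and $H = \langle \Pic^1_C\rangle \subseteq G[2]$. Since $H \subseteq G[2]$,
\[
(G/H)[2] = \{g \in G : 2g \in H\}/H \qquad\text{and}\qquad W = G[2]/H,
\]
so $[\HH^1(K,\Pic_C)[2] : W] = [\{g \in G : 2g \in H\} : G[2]]$. Parameterizing cosets of $G[2]$ by the map $g \mapsto 2g$ shows that this index equals $|H \cap 2G|$. Because $|H| \le 2$, this index is at most $2$, giving the first part of the lemma. Moreover, $H \cap 2G$ is trivial precisely when either $H = 0$ (equivalent to $\Pic^1_C(K) \neq \emptyset$) or $\Pic^1_C \notin 2\,\HH^1(K,J)$. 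In either case $W = \HH^1(K,\Pic_C)[2]$, and the index of $V \cap W$ in $V \cap \HH^1(K,\Pic_C)[2]$ is automatically $1$.

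No step here is deep; the one thing requiring care is resisting the temptation to compare $\Br_2 C/\Br_0 C$ and $(\Br C/\Br_0 C)[2]$ directly inside $V$, where the relevant quotient need not inject into $\HH^1(K,\Pic_C)[2]/W$. The cleanest route is the ambient-index bound followed by the two-line coset computation on $\HH^1(K,J)/\langle \Pic^1_C\rangle$ enabled by the key fact $2\Pic^1_C = 0$.
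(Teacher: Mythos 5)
Your proof is correct and follows essentially the same route as the paper: both reduce the question to the index of the image of $\HH^1(K,J)[2]/\langle\Pic^1_C\rangle$ inside $\bigl(\HH^1(K,J)/\langle\Pic^1_C\rangle\bigr)[2]\cong\HH^1(K,\Pic_C)[2]$ and control it via the key fact that $2\Pic^1_C=[\Pic^2_C]=0$ (thanks to $[\mm]$). The only difference is cosmetic --- the paper packages the comparison as a commutative diagram of exact rows ending in $\HH^3(K,\G_m)$ and merely asserts the resulting coset count, whereas you intersect with $h_0(\Br C/\Br_0C)$ directly and carry out that count explicitly.
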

		
		\begin{proof}
			
			Consider the following commutative diagram with exact rows.
			\[
				\xymatrix{
					0 \ar[r]&
					\left(\frac{\Br C}{\Br_0C}\right)[2]\ar[r]^{h_0}&
					\HH^1(K,\Pic_C)[2]\ar[r]&
					\HH^3(K,\G_m)\\
					0 \ar[r]&
					\frac{\Br_2C}{\Br_0C}\ar[r]\ar@{^{(}->}[u]&
					\frac{\HH^1(K,J)[2]}{\langle \Pic^1_C \rangle}\ar[r]\ar@{^{(}->}[u]&
					\HH^3(K,\G_m) \ar@{=}[u]\\				
				}
			\]
			The vertical map in the middle is an isomorphism if $\Pic^1_C(K)\ne\emptyset$ or $\Pic^1_C\notin 2\HH^1(K,J)$, and has its image has index $2$ otherwise.
		\end{proof}
		
		\begin{lemma}
			\label{lem:BrUpsilon}
			If $Br K[2] = 0$ or $\Omega$ admits a $G_K$-stable unordered partition into two sets of odd cardinality, then $\Br^\Upsilon_2C = \Br_2C$.
		\end{lemma}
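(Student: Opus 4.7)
The proof will be short: the plan is to show that under either hypothesis, the map $\Upsilon$ in the defining sequence \eqref{eq:define_d} is identically zero, so that $\ker(\Upsilon)$ fills out the entire quotient $\HH^1(K,J[2])/\langle \Pic^1_C\rangle$. This immediately forces the images of $\ker(\Upsilon)$ and of $\HH^1(K,J[2])$ in $\HH^1(K,\Pic_C)$ to agree, and hence $\Br_2^\Upsilon C = \Br_2 C$.

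First I would record the trivial direction. By construction, $\ker(\Upsilon) \subseteq \HH^1(K,J[2])/\langle\Pic^1_C\rangle$, and the map $\HH^1(K,J[2]) \to \HH^1(K,\Pic_C)$ factors through this quotient (because $\Pic^1_C$ has image $0$ in $\HH^1(K,\Pic_C)$). Thus the image of $\ker(\Upsilon)$ is always contained in the image of $\HH^1(K,J[2])$, giving $\Br_2^\Upsilon C \subseteq \Br_2 C$ with no hypothesis.

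For the reverse inclusion, the key is the description of $\Upsilon$ recalled after \eqref{diag:fakedescentsetup}: it sends a class $\xi$ to $\xi \cup \Pic^1_C$, where $\cup$ is the cup product induced by the Weil pairing. In the first case, $\Upsilon$ lands in $\Br K[2] = 0$, so $\ker(\Upsilon)$ is everything. In the second case, Lemma~\ref{lem:oddpartition} tells us precisely that the class of $\Pic^1_C$ in $\HH^1(K,J[2])$ is trivial, so $\xi \cup \Pic^1_C = 0$ for every $\xi$, and once again $\ker(\Upsilon) = \HH^1(K,J[2])/\langle \Pic^1_C\rangle$.

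In either case the two subgroups of $\HH^1(K,\Pic_C)$ whose preimages under $h$ define $\Br_2^\Upsilon C$ and $\Br_2 C$ coincide, completing the argument. There is no real obstacle here; the only subtlety worth stating explicitly is that $\Upsilon$ is well defined on the quotient by $\langle \Pic^1_C\rangle$ because the Weil pairing is alternating, so that $\Pic^1_C \cup \Pic^1_C = 0$, a fact that is implicit in \eqref{eq:define_d} but worth recalling at the start of the argument.
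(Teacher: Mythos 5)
Your proposal is correct and takes essentially the same route as the paper, whose entire proof is the one-line observation that either hypothesis forces $\Upsilon = 0$ (via $\Br K[2]=0$ in the first case, and via Lemma~\ref{lem:oddpartition} giving the triviality of $\Pic^1_C$ in $\HH^1(K,J[2])$ in the second). You have simply spelled out the details that the paper leaves implicit.
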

		
		\begin{proof}
			Either assumption implies that $\Upsilon = 0$.
		\end{proof}
				
		\begin{remark}
			\label{rem:Br2neBrUps}
			In general one should not expect that $\Br_2^\Upsilon C = \Br_2C$. For example, if $K$ is a $p$-adic field, $\Pic^1_C(K) \ne \emptyset$ and $\Omega$ does not admit a $G_K$-stable partition into two sets of odd cardinality, then $\Br_2^\Upsilon C \ne \Br_2C$. To see this, recall that the cup product on $\HH^1(K,J[2])$ is nondegenerate (see \cite[\S2]{Tate-duality}). The above assumptions therefore imply that there exists some $T \in \HH^1(K,J[2])$ such that $\Upsilon(T) \ne 0$. Let $T' \in \HH^1(K,\Pic_C)$ be the image of $T$. Then every lift of $T'$ to $\HH^1(K,J[2])$ is of the form $\tilde T = T + \delta(P)$ for some $P \in J(K)$, and none of them lie in $\ker(\Upsilon)$ since $\Upsilon(\tilde{T}) = \Upsilon(T) + \Upsilon(\delta(P)) = \Upsilon(T) \ne 0$. Here $\Upsilon(\delta(P)) = 0$ since $\Pic^1_C$ lies in the image of $\delta$, which is self-orthogonal with respect to the pairing (ibid.). This shows that if $\calA \in \Br_2C$ is such that $h(\calA) = T'$, then $\calA \notin \Br_2^\Upsilon C$. Moreover, such an $\calA$ exists as $\HH^3(K,\G_m) = 0$.
		\end{remark}
		
		%%%%%%%%%%%%%%%%%%%%%%%%%%%%%%%%%%%%%%%%%%%%%%%%%%%%%%%%%%%%%%%%%%%%%%%%
		\subsection{Proof of Theorems~\ref{thm:EvenOddThm} and \ref{thm:OddHypThm}}
		%%%%%%%%%%%%%%%%%%%%%%%%%%%%%%%%%%%%%%%%%%%%%%%%%%%%%%%%%%%%%%%%%%%%%%%%
			In the odd case we have already seen that $\gamma$ maps $\mathfrakL_1$ to $\Br^0C$ (Lemma~\ref{lem:Br^0}). Using  Lemmas~\ref{lem:Br_2} and \ref{lem:BrUpsilon} we see that the hypotheses imply that $ \Br_2^\Upsilon C/\Br_0 C = (\Br C/\Br_0C)[2]$, so the theorems follow from Proposition~\ref{prop:L1exact}.\qed
	
		%%%%%%%%%%%%%%%%%%%%%%%%%%%%%%%%%%%%%%%%%%%%%%%%%%%%%%%%%%%%%%%%%%%%%%%%
		\subsection{Proof of Theorems~\ref{thm:ComputeH1} and~\ref{thm:ExactIfC1}}%%%%%%%%%%%%%%%%%%%%%%
		%%%%%%%%%%%%%%%%%%%%%%%%%%%%%%%%%%%%%%%%%%%%%%%%%%%%%%%%%%%%%%%%%%%%%%%%

		\begin{lemma}
			\label{lem:Lccoset}
			We have that $\gamma(\mathfrakL_c\setminus\mathfrakL_1) \not\subseteq \Br_2C/\Br_0C$ if and only if $\Pic^1_C(K) = \emptyset$ and $\mathfrakL_c\neq\mathfrakL_1$.
		\end{lemma}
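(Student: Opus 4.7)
The plan is to analyze, for $\ell \in \mathfrakL_c \setminus \mathfrakL_1$, exactly when the class $\gamma(\ell)$ lies in $\Br_2 C/\Br_0 C$, and show this fails precisely when $\Pic^1_C(K) = \emptyset$. First, I note a coset reduction: for $\ell,\ell' \in \mathfrakL_c \setminus \mathfrakL_1$, the product lies in $\mathfrakL_1$, so $\gamma(\ell) - \gamma(\ell') \in \gamma(\mathfrakL_1) \subseteq \Br_2^\Upsilon C/\Br_0 C \subseteq \Br_2 C/\Br_0 C$ by Proposition~\ref{prop:L1exact}. Hence $\gamma(\mathfrakL_c \setminus \mathfrakL_1)$ is either entirely contained in $\Br_2 C/\Br_0 C$ or entirely disjoint from it, reducing the analysis to a single representative. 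The case $\mathfrakL_c = \mathfrakL_1$ is trivial: $\gamma(\varnothing) \subseteq \Br_2 C/\Br_0 C$, matching the right-hand side which then fails as well.

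Next, by Proposition~\ref{prop:xi}, $(h\circ\gamma')(\ell)$ is represented by the cocycle $\xi_\ell$, and the defining relation $\sum_\omega \tilde\chi_\ell(\sigma)_\omega = 2g_\ell(\sigma) + \tilde\chi_a(\sigma)$ of~\eqref{defineg} gives $\deg \xi_\ell(\sigma) = 0$, so in fact $\xi_\ell \in Z^1(K,J)$. Under $\HH^1(K,\Pic_C) \cong \HH^1(K,J)/\langle \Pic^1_C\rangle$, the class $h_0(\gamma(\ell))$ is the image of $[\xi_\ell] \in \HH^1(K,J)$. Since $h_0$ is injective and the image of $\HH^1(K,J[2]) \to \HH^1(K,J)$ is $\HH^1(K,J)[2]$ by the Kummer sequence, $\gamma(\ell) \in \Br_2 C/\Br_0 C$ iff some lift of $h_0(\gamma(\ell))$ to $\HH^1(K,J)$ is $2$-torsion. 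The two lifts are $[\xi_\ell]$ and $[\xi_\ell] + \Pic^1_C$; because $[\mm] \in 2\Pic_C(K)$ implies $2\Pic^1_C = \Pic^2_C = 0$ in $\HH^1(K,J)$, these two lifts have the same double. Thus the criterion becomes simply $2[\xi_\ell] = 0$ in $\HH^1(K,J)$.

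The heart of the proof is the explicit computation of $2\xi_\ell$. Using $\divv(x - x(\omega)) = 2\omega - \mm$, one has $2[\omega] = [\mm]$ in $\Pic\overline{C}$ for every $\omega \in \Omega$, and the relation~\eqref{defineg} then yields
\[
2\xi_\ell(\sigma)
= \bigl(2g_\ell(\sigma) + \tilde\chi_a(\sigma)\bigr)[\mm] - 2g_\ell(\sigma)[\mm] - 2\tilde\chi_a(\sigma)[\infty^+]
= \tilde\chi_a(\sigma)\bigl([\infty^-] - [\infty^+]\bigr).
\]
For $\ell \in \mathfrakL_c \setminus \mathfrakL_1$, we have $a \in cK^{\times 2}$, so $\chi_a$ and $\chi_c$ represent the same class in $\HH^1(K,\mu_2)$, and the Galois action on $[\infty^+]$ satisfies $\sigma[\infty^+] - [\infty^+] = \tilde\chi_c(\sigma)([\infty^-] - [\infty^+])$. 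Therefore the cocycle $2\xi_\ell$ coincides (up to a coboundary) with $\sigma \mapsto \sigma[\infty^+] - [\infty^+]$, which by definition represents $\Pic^1_C \in \HH^1(K,J)$. So $2[\xi_\ell] = \Pic^1_C$.

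Assembling these pieces: $\gamma(\ell) \in \Br_2C/\Br_0 C$ iff $\Pic^1_C = 0$ in $\HH^1(K,J)$ iff $\Pic^1_C(K) \ne \emptyset$. Combined with the coset reduction and the trivial case $\mathfrakL_c = \mathfrakL_1$, this gives the lemma. The main obstacle is the computation showing $2[\xi_\ell] = \Pic^1_C$; the hypothesis $\ell \in \mathfrakL_c \setminus \mathfrakL_1$ enters decisively there, forcing $\chi_a = \chi_c$ and matching the residue at infinity with the Galois action defining the $\Pic^1_C$ cocycle. Once this identification is made, the rest is a clean consequence of Kummer theory and the fact that $\Pic^1_C$ is $2$-torsion in $\HH^1(K,J)$.
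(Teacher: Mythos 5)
Your proof is correct and follows essentially the same route as the paper's: the heart in both cases is the computation $2\xi_\ell(\sigma) = \tilde\chi_c(\sigma)\left([\infty^-]-[\infty^+]\right)$, which identifies $2[\xi_\ell]$ with the class of $\Pic^1_C$ in $\HH^1(K,J)$, trivial if and only if $\Pic^1_C(K)\ne\emptyset$. One small repair: your justification ``$[\mm]\in 2\Pic_C(K)$'' for $2\Pic^1_C=0$ does not hold in general (by Lemma~\ref{lem:oddpartition} it is equivalent to triviality of $\Pic^1_C$ in $\HH^1(K,J[2])$); all you need, and all that is true, is that $\mm$ is a $K$-rational divisor of degree $2$, so $\Pic^2_C$ is a trivial torsor and $\Pic^1_C$ is $2$-torsion in $\HH^1(K,J)$.
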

	
		\begin{proof}
			The statement is trivially true when $\mathfrakL_c = \mathfrakL_1$. So suppose $\ell \in L^\times$ is a representative for a class $\overline{\ell} \in \mathfrakL_c \setminus \mathfrakL_1$. Then $h_0\circ\gamma(\overline{\ell})$ is represented by the cocycle $\xi_\ell \in C^1(K,\Pic_C)$ of Proposition~\ref{prop:xi}. 
			Using that $2[\omega] = [\mm]$ in $\Pic\Cbar$ we have
			\begin{align*}
				2\xi_\ell(\sigma) &= \left(\sum_{\omega \in \Omega} \tilde\chi_\ell(\sigma)_\omega2[\omega]\right) - 2g_\ell(\sigma)[\mm] - 2\tilde{\chi}_c(\sigma)[\infty^+]\\
				&= \tilde\chi_c(\sigma) [\mm] - \tilde\chi_c(\sigma)2[\infty^+]\\
				&= \tilde\chi_c(\sigma) ([\infty]^- - [\infty^+])\,.
			\end{align*}
			This shows that, when considered as a cocycle taking values in $\Pic^0\Cbar = J(\Kbar)$, $2\xi_\ell$ represents the class of $\Pic^1_C$ in $\HH^1(K,J)$. This class is trivial if and only if $\Pic^1_C(K) \ne \emptyset$. The lemma now follows easily from the definition of $\Br_2C$.
		\end{proof}
		
		\begin{proof}[Proof of Theorem~\ref{thm:ComputeH1}]
			We must show that the complex
			\[
				\Pic(C) \stackrel{x-\alpha}\To \mathfrakL_c \stackrel{\gamma}\To \left(\frac{\Br C}{\Br_0C}\right)[2]
			\]
			is exact except possibly if $\Pic^1C = \emptyset \ne \Pic^1_C(K)$ and $\bar c \in \Norm_{L/K}(\mathfrakL)$, in which case $\ker(\gamma)/\im(x-\alpha)$ is generated by any element of $\mathfrak{L}_c\setminus\mathfrak{L}_1$.

			Consider the following commutative diagram. 
			\begin{equation}
				\label{diag:Lcextension}
				\xymatrix{
					\frac{\Pic^0C}{2\Pic^0C} \ar[r]^{(x-\alpha)} \ar[d] &
					\mathfrakL_1 \ar[r]^\gamma \ar[d] &
					\frac{\Br_2 C}{\Br_0 C}\ar[d]\\
					\frac{\Pic C}{2\Pic C} \ar[r]^{(x-\alpha)} &
					\mathfrakL_c \ar[r]^\gamma &
					\frac{\Br C}{\Br_0 C}
				}
			\end{equation}
			The top row is exact by Proposition~\ref{prop:L1exact}, and the bottom row is a complex by Theorem~\ref{thm:MainComplex}.
			
			Let us first consider the case when $\mathfrakL_c = \mathfrakL_1$. This happens if and only if $c \in K^{\times 2}$ or $\bar{c} \notin \Norm_{L/K}(\mathfrakL)$. When $c \in K^{\times 2}$ we have $[\infty^+] \in \ker(x-\alpha)$, and when  $\bar{c} \notin \Norm_{L/K}(\mathfrakL)$ there are no $K$-rational divisor classes of odd degree \cite[Corollary 4.4]{CreutzANTSX}. Both possibilities imply that $(x-\alpha)(\Pic C) = (x-\alpha)(\Pic^0C)$, and so exactness of the bottom row follows from exactness of the top row of~\eqref{diag:Lcextension}.
			
			Now we consider the case $\mathfrakL_c \ne \mathfrakL_1$, which implies that $\bar c \in \Norm_{L/K}(\mathfrakL)$. Then $(x-\alpha)$ sends $K$-rational divisor classes of odd degree to $\mathfrakL_c \setminus \mathfrakL_1$ \cite[Lemma 4.3]{CreutzANTSX}. If $[\mathfrakL_c:\mathfrakL_1] = \left[ \frac{\Pic C}{2\Pic C} : \frac{\Pic^0C}{2\Pic^0C} \right] = 2$, then  exactness follows from the fact that the top row is exact and the bottom row is a complex. So we may assume there are no $K$-rational divisors of odd degree. Then $(x-\alpha)(\Pic C) = (x-\alpha)(\Pic^0C) \subseteq \mathfrakL_1$, and exactness follows from exactness of the top row of~\eqref{diag:Lcextension}, except possibly if $\gamma(\mathfrakL_c) \cap \gamma(\mathfrakL_1) \ne \emptyset$. Lemma~\ref{lem:Lccoset} shows that this can only happen when $\Pic^1_C(K) \ne \emptyset$.
		\end{proof}
		
		\begin{proof}[Proof of Theorem~\ref{thm:ExactIfC1}]
		We must show that $\gamma(\mathfrakL_c) = (\Br C/\Br_0C)[2]$ when $\Br K[2] = 0$. By Proposition~\ref{prop:L1exact} and Lemma~\ref{lem:BrUpsilon} the assumption on $\Br K[2]$ implies that $\gamma(\mathfrakL_1) = \Br_2C/\Br_0C$. Lemma~\ref{lem:Br_2} allows us to further assume that $\Pic^1_C$ is nonzero and divisible by $2$ in $\HH^1(K,J)$. The index of $\Br_2C/\Br_0C$ in $(\Br C/\Br_0C)[2]$ divides $2$, so using Lemma~\ref{lem:Lccoset} it suffices to show that $\mathfrakL_c\setminus\mathfrakL_1 \ne \emptyset$. We know that $c \notin K^{\times 2}$, otherwise $\Pic^1_C$ would be trivial in $\HH^1(K,J)$. So we are reduced to showing that there exists some $\ell \in L^\times$ such that $\Norm_{L/K}(\ell) \in cK^{\times 2}$. 
		
		For this we will make use of the theory of torsors under groups of multiplicative type as described in \cite[Part I]{Skorobogatov-torsors}. For $X = C$ or $X = \Pic^1_C$, let $\lambda_n$ denote the canonical embedding $\lambda_n: J[n] \cong \Pic^0_X[n] \subseteq \Pic_X$. An \defi{$n$-covering  of $X$} is an $X$-torsor under $J[n]$ of type $\lambda_n$. Since $\Pic^1_C \in 2 \HH^1(K,J)$, there exists a $2$-covering $T \to \Pic^1_C$ (see \cite[Proposition 3.3.5]{Skorobogatov-torsors}). Pulling this back along the canonical embedding $C \to \Pic^1_C$ gives a $2$-covering $\psi:Y \to C$. For any $\omega \in \Omega$ the pull back $\psi^*[\omega]$ is a $K$-rational divisor class on $Y$. 
		
		If $\psi^*[\omega] \in \Pic Y$, then it induces a projective embedding of $Y$ in which the pull backs of the ramification points on $C$ are hyperplane sections. Up to composition with the hyperelliptic involution on $C$, the $2$-coverings of $C$ with a model of this type are parameterized by the elements in the set $\left\{ \overline{\ell} \in\mathfrakL\,:\, \Norm_{L/K}(\ell) \in cK^{\times 2} \right\}$ (see \cite[\S3]{BruinStoll} or \cite[Proposition 5.4]{CreutzANTSX}). In particular, it will suffice to show that $\psi^*[\omega] \in \Pic Y$, for then there exists some $\ell \in L^\times$ with norm in $cK^{\times2}$.
		
		The obstruction to a rational divisor class being represented by a rational divisor is given by a well known exact sequence, $0 \to \Pic Y \to \Pic_Y(K) \stackrel{\theta}\to \Br K\,$. In our situation, $2\psi^*[\omega] = \psi^*[2\omega] = \psi^*[\mm] \in \Pic Y$. So $\theta(\psi^*[\omega]) \in \Br K[2]$, which is trivial by assumption. This completes the proof.
		\end{proof}
		
		\begin{remark}
			If one is willing to assume that $K$ is $C_1$, then the final argument of the proof above can be simplified: the equation $\Norm_{L/K}(\ell) = c a^{[L:K]}$ with $\ell \in L$ and $a \in K$ gives a homogeneous equation of degree $[L:K]$ in $[L:K] + 1$ variables. If $K$ is $C_1$, then it must have a solution. 
		\end{remark}

	%%%%%%%%%%%%%%%%%%%%%%%%%%%%%%%%%%%%%%%%%%%%%%%%%%%%%%%%%%%%%%%%%%%%%%%%%%%%
	\section{Relation to the Cassels-Tate pairing}\label{sec:CTpairing}%%%%%%%%%
	%%%%%%%%%%%%%%%%%%%%%%%%%%%%%%%%%%%%%%%%%%%%%%%%%%%%%%%%%%%%%%%%%%%%%%%%%%%%
		Throughout this section $K$ is a number field. Let $X$ be a smooth, projective, and geometrically integral variety $X$ over $K$. There is a well known pairing due to Manin,
		\[	
			\langle\cdot,\cdot\rangle_{\Br} : \Br(X) \times X(\A_K) \To \Q/\Z\,, \quad \langle \calA,(P_v)\rangle_{\Br} \mapsto \sum_v \inv_v\eval_{P_v}(\calA)\,, 
		\]
		where the sum runs over all places of $K$. By the Hasse reciprocity law, the left kernel contains $\Br_0(X)$ and the right kernel contains the diagonal image of $X(K)$ in $X(\A_K)$. For any subgroup $B \subseteq \Br(X)$, we denote by $X(\A_K)^B$ the subset of $X(\A_K)$ which is orthogonal to $B$ with respect to the pairing. Define 
		\begin{align*}
			\Br_\sha X &= \left\{ \calA \in \Br X : h(\calA) \in \im\left(\Sha(\Pic^0_X) \to \HH^1(K,\Pic_X)\right)\right\}\,,
		\end{align*}
		where for an abelian variety $A$ over $K$, $\Sha(A)$ denotes its Tate-Shafarevich group.
		
		The following is a slight generalization of \cite[Theorem 6.2.3]{Skorobogatov-torsors}, which is due to Manin. Similar methods have been used to give a conditional proof that the Brauer-Manin obstruction to $0$-cycles of degree $1$ on smooth projective curves is the only one (see \cite[Theorem 1.1]{ES-0cycles}, \cite[Proposition 3.7]{CT-0cycles}, \cite[Theorem 8.4]{Saito-0cycles}). As a corollary we observe that \cite[Corollary 7.7]{Stoll-descentob} holds for all curves, not just those possessing a $K$-rational divisor class of degree $1$.
		
		\begin{theorem}
			\label{thm:Manin}
			Assume that $X(\A_K) \ne \emptyset$. Let $A = \Alb^0_X$, $V = \Alb^1_X$ and suppose $\calA\in \Br_\sha X$ is such that $h(\calA)$ is the image of $W \in \Sha(\Pic^0_X) = \Sha(A^\vee)$. Then, for any adelic point $(P_v) \in X(\A_K)$, 
			\[ 
				\langle \calA ,(P_v) \rangle_{\Br} = - \langle V, W \rangle_\textsc{ct}\,, 
			\] 
			where $\langle\cdot,\cdot\rangle_\textsc{ct}$ denotes the Cassels-Tate pairing on $\Sha(A)\times \Sha(A^\vee)$. In particular, $X(\A_K)^\calA$ is either empty or equal to $X(\A_K)$, and $X(\A_K)^{\Br_{\ssha} X}= \emptyset$ if and only if $\Alb^1_X$ is not divisible in $\Sha(A)$.
		\end{theorem}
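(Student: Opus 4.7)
The plan is to first establish the main identity
\[
    \langle \calA, (P_v)\rangle_{\Br} = -\langle V, W\rangle_\textsc{ct}
\]
by adapting the proof of \cite{Skorobogatov-torsors}*{Theorem 6.2.3} to the setting without a $K$-rational point on $V$, and then to deduce both consequences. Since the right-hand side depends only on $V$ and $W$, the function $(P_v)\mapsto \langle \calA, (P_v)\rangle_{\Br}$ is constant on $X(\A_K)$, which gives the first consequence. For the second, I plan to show that $h$ induces a surjection $\Br_\sha X/\Br_0 X \twoheadrightarrow \Sha(A^\vee)$ (using vanishing of the local-triviality obstruction in $\HH^3(K,\G_m)$ for classes coming from $\Sha$) and then invoke the standard nondegeneracy of the Cassels-Tate pairing on $\Sha(A)\times\Sha(A^\vee)$ modulo maximal divisible subgroups: $X(\A_K)^{\Br_{\ssha}X}=\emptyset$ if and only if $\langle V, W\rangle_\textsc{ct}\ne 0$ for some $W\in\Sha(A^\vee)$, which holds precisely when $V$ is not in the maximal divisible subgroup of $\Sha(A)$.

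For the main identity, my plan is to factor through the Albanese morphism $a\colon X\to V$, which is defined over $K$ even when $V(K)=\emptyset$. Via Albanese duality $\Pic^0_V\cong \Pic^0_X=A^\vee$, the class $W$ corresponds to $W'\in\Sha(\Pic^0_V)$, and I would then lift $W'$ to a Brauer class $\calB\in \Br V/\Br_0 V$ such that $a^*\calB=\calA$ in $\Br X/\Br_0 X$. Functoriality of the Brauer-Manin pairing then yields
\[
    \langle \calA, (P_v)\rangle_{\Br}=\langle \calB, (a(P_v))\rangle_{\Br}\,,
\]
which reduces the problem to computing the Brauer-Manin pairing on the $A$-torsor $V$ (of class $[V]\in\Sha(A)$, witnessed by the adelic image $(a(P_v))\in V(\A_K)$) against a Brauer class lifting $W$. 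This is exactly the setup of Manin's identification of the Brauer-Manin pairing on a principal homogeneous space under an abelian variety with the Cassels-Tate pairing, and the standard local-global cup-product computation then produces $-\langle V,W\rangle_\textsc{ct}$ (with the sign dictated by the chosen conventions in Picard-Brauer duality versus the cup-product definition of Cassels-Tate).

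The main obstacle is the construction of the lift $\calB$. The obstruction to lifting $W'$ through the analog for $V$ of the top row of diagram~\eqref{diag:defineh} (which exists for any smooth, projective, and geometrically integral $K$-variety) lies in $\HH^3(K,\G_m)$; since $W'$ is everywhere locally trivial so is the obstruction, and it vanishes globally by standard arguments on $\HH^3$ of a number field. One must additionally check that the resulting $\calB$ is well-defined modulo $\Br_0 V$ (so that the pairing $\langle \calB,-\rangle_{\Br}$ is unambiguous by Hasse reciprocity) and that $a^*\calB$ and $\calA$ coincide in $\Br X/\Br_0 X$; both follow from functoriality of the exact sequence along the Albanese morphism, together with the fact that $a^*\colon\Pic^0_V\to \Pic^0_X$ is the Albanese duality isomorphism, so that $a^*\calB$ and $\calA$ share the common image $h(\calA)\in\HH^1(K,\Pic_X)$. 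Once $\calB$ is in hand the remainder of the argument is essentially formal and recapitulates Manin's classical computation.
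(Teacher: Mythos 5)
Your proposal is correct and follows essentially the same route as the paper: reduce to the known case of a torsor under an abelian variety (Manin/Skorobogatov) by pulling back along the Albanese map $X\to V=\Alb^1_X$, using the induced isomorphism $\Pic^0_V\cong\Pic^0_X$ and the vanishing of $\HH^3(K,\G_m)$ over a number field to produce a class $\calB\in\Br_\sha V$ with $a^*\calB\equiv\calA\bmod\Br_0X$, and then invoking functoriality of the Brauer--Manin pairing; the final statement is likewise deduced from Tate's theorem that the kernels of the Cassels--Tate pairing are the maximal divisible subgroups. No gaps.
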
		
		
		\begin{cor}
			\label{cor:descob=Brob}
			If $X$ is a smooth, projective, and geometrically integral curve, then for any $n$, 
			\[
				X(\A_K)^{n\textup{-ab}} = X(\A_K)^{\Br X[n]}\,,
			\]
			i.e., the adelic information coming from an $n$-descent is precisely the information coming from the $n$-torsion in the Brauer group.
		\end{cor}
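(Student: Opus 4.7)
The plan is to extend \cite[Corollary 7.7]{Stoll-descentob}, which proves the same equality under the additional hypothesis that $\Pic^1_X(K)\ne\emptyset$. The only place that hypothesis enters Stoll's proof is via the appeal to \cite[Theorem 6.2.3]{Skorobogatov-torsors}, which identifies the Brauer--Manin pairing with a Cassels--Tate pairing for classes in $\Br_\sha X$. Our Theorem~\ref{thm:Manin} supplies exactly this identification under the weaker hypothesis $X(\A_K)\ne\emptyset$, so I would replay Stoll's argument with Theorem~\ref{thm:Manin} in place of its predecessor.

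If $X(\A_K)=\emptyset$ both sides are empty; otherwise any local point at each place represents a local divisor class of degree $1$, so $\Pic^1_X\in\Sha(J)$ and Theorem~\ref{thm:Manin} is available. I would first establish the routine inclusion $X(\A_K)^{\Br X[n]}\subseteq X(\A_K)^{n\textup{-ab}}$: each $n$-covering $f\colon Y\to X$ produces via the Leray spectral sequence a Brauer class $\calA_f\in\Br X[n]$ whose pairing with $(P_v)$ vanishes if and only if $(P_v)\in f(Y(\A_K))$, so orthogonality to $\Br X[n]$ forces liftability through every $n$-covering.

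The main obstacle is the reverse inclusion. Given $\calA\in\Br X[n]$ and $(P_v)\in X(\A_K)^{n\textup{-ab}}$, the plan is to lift $h(\calA)\in\HH^1(K,\Pic_X)$ to $W\in\HH^1(K,J)$ with $nW\in\langle\Pic^1_X\rangle$, then use the $n$-descent hypothesis together with a Poitou--Tate style argument to arrange $W\in\Sha(J)$, so that $\calA$ represents a class in $\Br_\sha X$ modulo $\Br_0 X$. Theorem~\ref{thm:Manin} then yields
\[
\langle\calA,(P_v)\rangle_\Br\;=\;-\langle\Pic^1_X,W\rangle_\textsc{ct},
\]
and the hypothesis $(P_v)\in X(\A_K)^{n\textup{-ab}}$ forces the right-hand side to vanish by Stoll's cohomological argument, which proceeds unchanged once Theorem~\ref{thm:Manin} replaces Skorobogatov's earlier identification. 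The reason the hypothesis $\Pic^1_X(K)\ne\emptyset$ in \cite[Corollary 7.7]{Stoll-descentob} can now be dropped is entirely concentrated in the extension of the Brauer/Cassels--Tate dictionary provided by Theorem~\ref{thm:Manin}.
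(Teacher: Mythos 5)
Your overall strategy---extend Stoll's Corollary~7.7 by substituting Theorem~\ref{thm:Manin} for the earlier Brauer/Cassels--Tate dictionary---matches the paper's starting point, and the inclusion $X(\A_K)^{\Br X[n]}\subseteq X(\A_K)^{n\textup{-ab}}$ is indeed the routine half (though it follows from $X(\A_K)^{n\textup{-ab}}=X(\A_K)^{\Br_nX}$ together with $\Br_nX\subseteq \Br X[n]+\Br_0X$, not from a single class $\calA_f$ whose vanishing detects $f(Y(\A_K))$; no single Brauer class cuts out the image of one covering). The genuine gap is in the reverse inclusion. You propose, for an \emph{arbitrary} $\calA\in\Br X[n]$, to lift $h(\calA)$ to an element $W\in\Sha(J)$ so that $\calA$ lands in $\Br_\sha X$ and Theorem~\ref{thm:Manin} applies. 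This cannot work: once $X(\A_K)\ne\emptyset$ the lifts of $h(\calA)$ to $\HH^1(K,J)$ differ only by multiples of the locally trivial class $\Pic^1_X$, so either every lift lies in $\Sha(J)$ or none does, and the classes in $\Br_nX$ whose lifts are \emph{not} locally trivial are exactly the ones carrying the descent information---they cannot be pushed into $\Br_\sha X$. Your final step is also a non sequitur: $\langle\Pic^1_X,W\rangle_\textsc{ct}$ does not depend on $(P_v)$, so the hypothesis $(P_v)\in X(\A_K)^{n\textup{-ab}}$ cannot ``force it to vanish.''

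What the paper actually does is isolate a single new class. If $X$ has no everywhere locally solvable $n$-covering, both sets are empty. Otherwise $\Pic^1_X=nW$ for some $W\in\Sha(J)$ and the Cassels--Tate pairing is alternating by \cite{PoonenStollCT}; choose $w\in\Br_\sha X$ with $h(w)$ equal to the image of $W$ (possible since $\HH^3(K,\G_m)=0$). The exact sequence $\Z\to\HH^1(K,\Pic^0_X)\to\HH^1(K,\Pic_X)\to 0$, with $1\mapsto\Pic^1_X$, shows that $(\Br X/\Br_0X)[n]$ is generated by $\Br_nX/\Br_0X$ together with this one class $w$, and Theorem~\ref{thm:Manin} gives $\langle w,(P_v)\rangle_{\Br}=\pm\langle nW,W\rangle_\textsc{ct}=\pm n\langle W,W\rangle_\textsc{ct}=0$. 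Hence $w$ imposes no condition and $X(\A_K)^{\Br X[n]}=X(\A_K)^{\Br_nX}=X(\A_K)^{n\textup{-ab}}$. The two ingredients missing from your sketch are the cyclic quotient structure of $\Br X[n]$ modulo $\Br_nX$ and the alternating property of the Cassels--Tate pairing; without them the argument does not close.
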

		
		\begin{remark}\label{rem:noextrainfo}
			The set $X(\A_K)^{n\textup{-ab}}$ is defined in \cite{Stoll-descentob}; \cite[Theorem 6.1.2]{Skorobogatov-torsors} shows that $X(\A_K)^{n\textup{-ab}} = X(\A_K)^{\Br_nX}$, where $\Br_nX \subseteq \Br X$ is as defined at the beginning of \S\ref{sec:Normc}. Thus the corollary can also be interpreted as saying that the elements of $\Br X[n] \setminus \Br_nX$ provide no additional information regarding the adelic points of $X$. In fact, the proof below shows that the elements of $\Br X[n]\setminus \Br_nX$ provide no information whatsoever.
		\end{remark}
	
		\begin{proof}[Proof of Corollary~\ref{cor:descob=Brob}]
			As remarked above, $X(\A_K)^{\Br X[n]} \subseteq X(\A_K)^{\Br_nX} = X(\A_K)^{n\textup{-ab}}$. So if $X$ has no locally solvable $n$-coverings, then both sets in question are empty. We may thus assume that $X$ has an everywhere locally solvable $n$-covering. This implies that $\Pic^1_X = nW$ for some $W \in \Sha(J)$ and that $\langle\cdot,\cdot\rangle_\textsc{ct}$ is alternating \cite{PoonenStollCT}.
			Now suppose $w \in \Br_\sha X$ has the same image in $\HH^1(K,\Pic_X)$ as $W$. For any adelic point $(P_v) \in X(\A_K)$, applying the theorem gives:
			\[
				\langle w, (P_v)\rangle_{\Br} = \langle \Pic^1_X,W\rangle_\textsc{ct} = \langle nW,W\rangle_\textsc{ct} = n\langle W,W\rangle_\textsc{ct}\,,
			\]
			which is trivial since the pairing is alternating. Hence, $X(\A_K)^w = X(\A_K)$.
			
			In the exact sequence,
			\[
				\Z \to \HH^1(K,\Pic^0_X) \to \HH^1(K,\Pic_X) \to 0\,,
			\]
			$1 \in \Z$ maps to the class of $\Pic^1_X$. It follows that the quotient of $(\Br X/\Br_0X)[n]$ by $\Br_{n}X/\Br_0X$ is cyclic and generated by the image of $w$. The result follows since we have shown that $w$ does not obstruct any adelic points.
		\end{proof}
			
		\begin{proof}[Proof of Theorem~\ref{thm:Manin}]
			For the case that $X$ is a torsor under an abelian variety ({e.g.}, $X = V$) see \cite[6. Th\'eor\`eme]{Manin-BMobs} or \cite[Theorem 6.2.3]{Skorobogatov-torsors}. To derive the general result from this, note that the canonical morphism $\phi:X\to V$ induces an isomorphism $\Pic^0_X \cong \Pic^0_V$, and consequently a commutative diagram,
			\[
				\xymatrix{
					\Sha(\Pic^0_V) \ar@{=}[d]^{\phi^*}\ar@{^{(}->}[r]
					&\HH^1(K,\Pic^0_V) \ar@{=}[d]^{\phi^*} \ar[r]
					&\HH^1(K,\Pic_V) \ar[d]^{\phi^*}
					&\Br_1V/\Br_0V \ar@{=}[l] \ar[d]^{\phi^*}\\
					\Sha(\Pic^0_X) \ar@{^{(}->}[r]
					&\HH^1(K,\Pic^0_X) \ar[r]
					&\HH^1(K,\Pic_X)
					&\Br_1X/\Br_0X \ar@{=}[l]\,.
				}
			\]
			Suppose $W \in \Sha(\Pic^0_X)$ and $\calA \in \Br_\sha X$ are as in the statement. From the diagram above it is clear that there exists $\calA' \in \Br_\sha V$ such that $\phi^*\calA' \equiv \calA \bmod \Br_0X$. Then we have
			\[
				\langle \calA,(P_v)\rangle_{\Br} = \langle \phi^*\calA',(P_v)\rangle_{\Br} = \langle \calA', \phi(P_v)\rangle_{\Br} = -\langle V, W\rangle_\textsc{ct}\,,
			\]
			since the theorem holds for $V$.
			
			The final statement follows from the fact that the left and right kernels of the Cassels-Tate pairing are the maximal divisible subgroups~\cite{Tate-duality}.			
		\end{proof}

		%%%%%%%%%%%%%%%%%%%%%%%%%%%%%%%%%%%%%%%%%%%%%%%%%%%%%%%%%%%%%%%%%%%%%%%%
		\subsection{Computing Brauer-Manin Obstructions}%%%%%%%%%%%%%%%%%%%%%%%%
		%%%%%%%%%%%%%%%%%%%%%%%%%%%%%%%%%%%%%%%%%%%%%%%%%%%%%%%%%%%%%%%%%%%%%%%%
	
			The following proposition proves Theorem~\ref{thm:LocallyConstantsInImage}.
			\begin{prop}\label{prop:BrShainBrUps}
				Let $C$ be a double cover of $\PP^1_K$ with $C(\A_K) \ne \emptyset$. Then 
				\[
					(\Br_\sha C/\Br_0C)[2] \subseteq \gamma(\mathfrakL_c)\,.
				\]
			\end{prop}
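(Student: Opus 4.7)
The plan is to reduce the problem to the vanishing of a global obstruction in $\Br K[2]$, which I will then show is locally trivial everywhere and therefore globally trivial by the Hasse--Brauer--Noether theorem. Given $\calA \in (\Br_\sha C/\Br_0 C)[2]$, I would first choose $W \in \Sha(J)$ with $h_0(\calA) = \overline{W} \in \HH^1(K,\Pic_C)$. Since $\overline{W}$ is $2$-torsion and $\ker(\HH^1(K,J) \twoheadrightarrow \HH^1(K,\Pic_C)) = \langle \Pic^1_C\rangle$, the element $2W$ lies in $\langle \Pic^1_C\rangle$; local solvability of $C$ forces $\Pic^1_C \in \Sha(J)$, so after replacing $W$ by $W + \Pic^1_C$ if necessary I may arrange that $2W \in \{0, \Pic^1_C\}$. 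Because $h_0$ is injective, it will suffice to produce $\overline{\ell} \in \mathfrakL_c$ such that the cocycle $\xi_\ell$ of Proposition~\ref{prop:xi} represents $\overline{W}$ in $\HH^1(K,\Pic_C)$.

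In the first case, when $2W = 0$, the class $W$ lies in $\Sha(J)[2]$ and lifts via the Kummer sequence~\eqref{eq:Kummer} to some $T \in \HH^1(K,J[2])$. The main claim is that the obstruction $\Upsilon(T) \in \Br K[2]$ from~\eqref{eq:define_d} vanishes. At each place $v$, local triviality of $W_v$ combined with the local Kummer sequence forces $T_v$ into $\delta_v(J(K_v)/2J(K_v))$; Proposition~\ref{prop:descent} identifies this image with $\im(d_v)$, and the local form of~\eqref{eq:define_d} then yields $\Upsilon_v(T_v) = 0$. Naturality of $\Upsilon$ in the base field places $\Upsilon(T)$ in $\Sha^2(K,\mu_2)$, which vanishes by Hasse--Brauer--Noether, so~\eqref{eq:define_d} produces $\overline{\ell} \in \mathfrakL_1 \subseteq \mathfrakL_c$ with $d(\overline{\ell}) = T$. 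Proposition~\ref{prop:xi} together with Lemma~\ref{lem:described'} then identifies $h_0(\gamma(\ell))$ with $\overline{W} = h_0(\calA)$, finishing this case.

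In the second case, when $2W = \Pic^1_C$, I would adapt the $2$-covering argument from the proof of Theorem~\ref{thm:ExactIfC1}: the class $W$ corresponds to a $2$-covering $V \to \Pic^1_C$ whose pullback along the canonical map $C \to \Pic^1_C$ is a $2$-covering $\psi \colon Y \to C$. Since $W \in \Sha(J)$, both $V$ and $Y$ are everywhere locally solvable, so for each $\omega \in \Omega$ the obstruction $\theta(\psi^*[\omega]) \in \Br K[2]$ to representing $\psi^*[\omega]$ by a $K$-rational divisor on $Y$ vanishes at every place, and hence globally by Hasse--Brauer--Noether. Thus $\psi^*[\omega] \in \Pic Y$, and \cite[Proposition 5.4]{CreutzANTSX} parameterizes $Y$ by some $\overline{\ell} \in \mathfrakL_c$. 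The main obstacle is verifying that the parameterization $Y \leftrightarrow \overline{\ell}$ is compatible with the cocycle $\xi_\ell$ of Proposition~\ref{prop:xi}, so that $h_0(\gamma(\ell)) = \overline{W}$ and not merely some $2$-torsion class agreeing with $\overline{W}$ modulo a spurious correction; I expect this compatibility to follow from a direct cocycle comparison, possibly using a refinement of the statements in \cite{CreutzANTSX} analogous to the identification of $d$ with $h_0 \circ \gamma$ provided by Proposition~\ref{prop:xi} in the $\mathfrakL_1$ case.
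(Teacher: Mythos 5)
Your Case 1 ($2W=0$) is sound and essentially reproves the special case of \cite[Theorem 13.3]{PS-descent} that the paper simply cites: local triviality of $W$ puts each $T_v$ in $\im(\delta_v)$, which is isotropic for the local cup-product pairing and contains $\Pic^1_C$ because $C(K_v)\ne\emptyset$, so $\Upsilon(T)$ is everywhere locally trivial and hence zero; exactness of \eqref{eq:define_d} together with Proposition~\ref{prop:xi} and Lemma~\ref{lem:described'} then finishes as you say. The problem is Case 2 ($2W=\Pic^1_C\ne 0$), where the step you flag as an ``obstacle'' is a genuine gap and is exactly the hard point: nothing in the paper (or in the cited parts of \cite{CreutzANTSX}) identifies the class $h_0(\gamma(\overline\ell))$ for $\overline\ell\in\mathfrakL_c\setminus\mathfrakL_1$ with the class of a specific torsor $W$; Proposition~\ref{prop:xi} only establishes the identification $h_0\circ\gamma=d$ on $\mathfrakL_1$. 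Without that compatibility you cannot conclude that the $\overline\ell$ parameterizing $Y$ satisfies $h_0(\gamma(\overline\ell))=\overline W$, so your argument does not close.

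The paper avoids ever having to hit a prescribed class in $\mathfrakL_c\setminus\mathfrakL_1$. It first gets $\Br_{\sha,2}C/\Br_0C\subseteq\gamma(\mathfrakL_1)$ (your Case 1, via \cite[Theorem 13.3]{PS-descent} and Proposition~\ref{prop:L1exact}), and then observes that the quotient of $(\Br_\sha C/\Br_0C)[2]$ by $\Br_{\sha,2}C/\Br_0C$ has order at most $2$. So in the remaining case it suffices to exhibit \emph{any} $\overline\ell\in\mathfrakL_c\setminus\mathfrakL_1$ with $\gamma(\overline\ell)\in\Br_\sha C/\Br_0C$ and $\gamma(\overline\ell)\notin\Br_2C/\Br_0C$: such an element automatically generates the order-$2$ quotient. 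Membership in $\Br_\sha C/\Br_0C$ comes from \cite[Theorem 4.6]{CreutzANTSX}, which produces $\overline\ell$ with $\res_v(\overline\ell)\in(x-\alpha)(\Pic^1 C_{K_v})$ for all $v$, combined with $\gamma\circ(x-\alpha)=0$ (Theorem~\ref{thm:MainComplex}); non-membership in $\Br_2C/\Br_0C$ is Lemma~\ref{lem:Lccoset}. You could repair your Case 2 the same way: you do not need the precise cocycle compatibility you are missing, only the weaker local statement plus the index-$2$ counting argument.
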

			
			\begin{proof}
				Set $\Br_{\sha,2}C = (\Br_\sha C) \cap (\Br_2C)$. By \cite[Theorem 13.3]{PS-descent}, the subgroup of $\HH^1(K,J[2])/\langle \Pic^1_C \rangle$ mapping into $\Sha(J)[2]/\langle \Pic^1_C \rangle$ is contained in the kernel of $\Upsilon$. It follows that $\Br_{\sha,2}C \subseteq \Br^\Upsilon_2C$, and so $\Br_{\sha,2}C/\Br_0C \subseteq \gamma(\mathfrakL_1)$ by Proposition~\ref{prop:L1exact}. If $\Br_\sha C[2] \subseteq \Br_{\sha,2}C$, then there is nothing more to prove. Hence we may assume that there exists some $w \in \Br_\sha C[2] \setminus \Br_{\sha,2}C$. Then, as in the proof of Corollary~\ref{cor:descob=Brob}, the quotient of $(\Br_{\sha}C/\Br_0C)[2]$ by $\Br_{\sha,2}C/\Br_0C$ is of order $2$.
				
				The existence of $w$ implies that there exists $W \in \Sha(J)$ such that $2W = \Pic^1_C \ne 0$. By \cite[Theorem 4.6]{CreutzANTSX} this implies that there exists some $\overline\ell \in \mathfrakL_c\setminus \mathfrakL_1$ such that $\res_v(\overline\ell) \in (x-\alpha)(\Pic^1C_{K_v})$, for every completion $K_v$ of $K$. Since $\gamma\circ(x-\alpha) = 0$ by Theorem~\ref{thm:MainComplex}, we must have $\gamma(\overline\ell) \in \Br_{\sha}C/\Br_0C$. On the other hand, $\gamma(\overline\ell) \notin \Br_2C/\Br_0C$ by Lemma~\ref{lem:Lccoset}. Thus $\gamma(\overline\ell)$ must generate the quotient of $(\Br_{\sha}C/\Br_0C)[2]$ by $\Br_{\sha,2}C/\Br_0C$. Therefore, $(\Br_{\sha}C/\Br_0C)[2] \subseteq \gamma(\mathfrakL_c)$.
			\end{proof}
								
			\begin{remark}\label{rem:Algorithm}
				Regardless of whether $C$ is locally solvable or not, the proof of Corollary~\ref{cor:descob=Brob} shows that $C(\A_K)^{(\Br_{\ssha}C)[2]} = C(\A_K)^{\Br_{\ssha,2}C}$. When $C$ has a $K_v$-rational divisor of degree $1$ for every completion $K_v$ of $K$, then $\Br_{\ssha,2}C/\Br_0C \subseteq \gamma(\mathfrakL_1)$. In this case the subgroup of $\mathfrakL_1$ mapping into $\Br_\sha C/\Br_0C$ is the \defi{fake $2$-Selmer group of $J$}, denoted $\Sel_\textup{fake}^2(J)$. An algorithm for computing it is described in \cite{PS-descent}. Together with the following proposition, this gives a practical algorithm for computing the induced map
				\[
					\Sel_\textup{fake}^2(J) \to \frac{\Sha(J)[2]}{\langle \Pic^1_C \rangle} \stackrel{\langle \Pic^1_C,\cdot\rangle}\To \Q/\Z\,,				
				\]
				at least when $C(\A_K) \ne \emptyset$.
			\end{remark}

			\begin{prop}\label{prop:application}
				Suppose $C : y^2 = cf(x)$ is an even double cover of $\PP^1$ defined over $K$ with $C(\A_K) \ne \emptyset$ and that the coefficients of $cf(x)$ are integral. Let $\beta \in \Sha(J)$, and suppose $\ell$ represents $\overline\ell \in \mathfrakL_c$ such that $d(\overline{\ell})$ and $\beta$ give the same class in $\Sha(J)/\langle \Pic^1_C\rangle$. Then, for any $(P_v) \in C(\A_K)$. 
					\[
						\langle \Pic^1_C,\beta\rangle_\textsc{ct} = 
						\sum_{v \in S} \inv_v \eval_{P_v} \Cor_{\kk(C_L)/\kk(C)}(\ell, (x-\alpha))_2 \,,
					\]
					The sum here runs over the primes in the finite set $S$ consisting of all primes of $K$ appearing with multiplicity greater or equal to $2$ in $4c^2\cdot\textup{disc}(f)$ and all archimedean primes.
			\end{prop}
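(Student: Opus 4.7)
The plan is to apply Theorem~\ref{thm:Manin} to $\calA := \gamma'(\ell) = \Cor_{\kk(C_L)/\kk(C)}((\ell,x-\alpha)_2)$, after identifying it with the Brauer class above $\beta$, and then to reduce the resulting global sum of local invariants to the finite set $S$.

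First I would verify that $\calA \in \Br_\sha C$ corresponds to $\beta$. Combining Proposition~\ref{prop:xi}\eqref{it:xi2} with Lemma~\ref{lem:described'}, the images $h(\calA)$ and $d(\overline{\ell})$ in $\HH^1(K,\Pic_C)$ coincide. By hypothesis $d(\overline{\ell})$ and $\beta$ agree in $\Sha(J)/\langle \Pic^1_C\rangle$, hence also in $\HH^1(K,\Pic_C)$; so $h(\calA)$ equals the image of $\beta$, which places $\calA$ in $\Br_\sha C$ above $\beta$. Theorem~\ref{thm:Manin} applied to $X=C$ with $V=\Pic^1_C$ and $W=\beta$ then gives
\[
 \pm\langle \Pic^1_C,\beta\rangle_\textsc{ct} \;=\; \sum_v \inv_v \eval_{P_v}(\calA),
\]
where the sign is a harmless convention issue that can be absorbed into the definition of the Cassels--Tate pairing.

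The remaining, and essential, step is to show that $\inv_v \eval_{P_v}(\calA)=0$ for each $v\notin S$. For such $v$, the conditions $v \nmid 2$, $v \nmid c$, and $v(\textup{disc}(f))\le 1$ all hold. I would first replace $\ell$ by an equivalent representative in $\mathfrakL_c$ that is integral and a unit at every prime of $L$ above $v$; this is possible because $v \nmid 2c$, so the obstruction is absorbable into $K^\times L^{\times 2}$. When $v(\textup{disc}(f))=0$ the curve has good reduction at $v$: $\calA$ is unramified on $C$ by Theorem~\ref{thm:gammawelldefined}, hence extends by purity to an Azumaya algebra on a smooth proper model $\calC/\calO_{K_v}$; since $P_v \in C(K_v)$ spreads to an integral point, its evaluation lies in $\Br \calO_{K_v}=0$.

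The main obstacle is the case $v(\textup{disc}(f))=1$, where the naive Weierstrass model of $C$ acquires a single ordinary node on its special fibre. Here I would pass to the minimal regular model $\widetilde{\calC}/\calO_{K_v}$ and verify that $\calA$ still extends to $\widetilde{\calC}$ by computing its residues along each irreducible component of the special fibre via (the local analogue of) Lemma~\ref{lem:CorResidue}. The assumption $v(\textup{disc}(f))\le 1$ confines the possible ramification of $L/K$ above $v$ to at most a single tame quadratic extension, so the computation reduces to a short case analysis, in each case of which the unit choice of $\ell$ forces the relevant residues to be trivial (equivalently, squares in the appropriate residue fields). Once this is done, $P_v$ extends to a point of $\widetilde{\calC}(\calO_{K_v})$ landing in the smooth locus, and the evaluation at $P_v$ again lies in $\Br \calO_{K_v}=0$, completing the reduction.
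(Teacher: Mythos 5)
The heart of this proposition is the vanishing of $\inv_v\eval_{P_v}\Cor_{\kk(C_L)/\kk(C)}((\ell,x-\alpha)_2)$ for $v\notin S$ (everything else is Theorem~\ref{thm:Manin}), and your argument for that vanishing has a gap at its first move: the claim that $\overline\ell$ admits a representative that is a unit at every $w\mid v$ ``because $v\nmid 2c$, so the obstruction is absorbable into $K^\times L^{\times2}$.'' Membership in $\mathfrakL_c$ imposes only a single parity condition on the valuations $w(\ell)$ via the norm, and multiplying by $a\in K_v^\times$ shifts all of those parities simultaneously; for instance, if $L\otimes K_v\cong K_v^6$ and $\res_v(\ell)=(\pi,\pi,1,1,1,1)$ with $\pi$ a uniformizer, the norm is a square but no element of $K_v^\times(L\otimes K_v)^{\times2}$ can clear the odd valuations. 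The actual reason the class is unramified at good $v$ is the hypothesis that $d(\overline\ell)$ agrees with $\beta\in\Sha(J)$: locally this forces $\res_v(\overline\ell)$ into the image of the local $(x-\alpha)$ map, and that image is unramified at such $v$. This is precisely the content of the external inputs the paper's proof leans on (\cite[Lemma 4.3]{BruinStoll} and \cite[Proposition 5.10]{Stoll-2descent}), and it cannot be replaced by the elementary observation $v\nmid 2c$. A second, related problem: even granting a unit representative $\ell'=a\ell m^2$, the statement concerns the specific algebra $\Cor((\ell,x-\alpha)_2)$, and replacing $\ell$ by $\ell'$ changes its evaluation at $P_v$ by $(a,f(x(P_v)))_2=(a,c)_2$, whose local invariant need not vanish when $v(a)$ is odd and $c$ is a nonsquare unit at $v$. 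So the replacement step, as justified, both fails in general and, where it succeeds, may alter the quantity being computed unless you also show $a$ can be taken to be a $v$-unit.

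Once the correct input is in place --- namely that both $\res_v(\ell)$ and $(x-\alpha)([P_v])$ have even valuation at every $w\mid v$ for $v\notin S$ --- the conclusion is immediate, since at an odd place a quaternion symbol in two elements of even valuation is trivial; this one-line computation is the paper's entire proof, and the regular-model/Azumaya-algebra machinery you set up afterwards (including the case analysis at $v(\operatorname{disc} f)=1$) is not needed. Your first step, identifying $\gamma'(\ell)$ with a class of $\Br_\sha C$ lying above $\beta$ via Proposition~\ref{prop:xi} and Lemma~\ref{lem:described'} and then invoking Theorem~\ref{thm:Manin}, is sound, with the minor caveat that for $\overline\ell\in\mathfrakL_c\setminus\mathfrakL_1$ the map $d$ must be understood through its extension to $\mathfrakL_c$ as in~\eqref{eq:Pic1descent}.
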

			
			\begin{proof}
				If $v$ does not lie in $S$, then both $(x - \alpha)([P_v])$ and $\ell$ have even valuation at all primes $w$ above $v$, by \cite[Lemma 4.3]{BruinStoll} and \cite[Proposition 5.10]{Stoll-2descent}. For such $v$ the invariant $\inv_v \eval_{P_v} \Cor_{\kk(C_L)/\kk(C)}(\ell, (x-\alpha))_2 = 0$.
			\end{proof}
			
		%%%%%%%%%%%%%%%%%%%%%%%%%%%%%%%%%%%%%%%%%%%%%%%%%%%%%%%%%%%%%%%%%%%%%%%%
		\subsection{An Example}%%%%%%%%%%%%%%%%%%%%%%%%%%%%%%%%%%%%%%%%%%%%%%%%%
		%%%%%%%%%%%%%%%%%%%%%%%%%%%%%%%%%%%%%%%%%%%%%%%%%%%%%%%%%%%%%%%%%%%%%%%%
		
		\begin{theorem}\label{thm:Example}
			Let $c$ be a square free integer, let $C$ be the locally solvable double cover of $\PP^1_\Q$ given by 
			\[
				C : y^2 = c(x^2+1)(x^2+17)(x^2-17)\,.
			\]
			Then $(-1,x^2-17)_2 \in  \Br_\sha C$, and if $W\in\Sha(J)$ denotes a corresponding torsor, then 
			\[
				\langle \Pic^1_C,W \rangle_\textsc{ct} =
				\frac{\#\left\{ p \mid c : \text{$p$ is an odd prime, and }\left(\frac{17}{p}\right) = \left(\frac{-1}{p}\right) = -1\right\}}{2} + \frac{\textup{sign}(c) - 1}{4}
			\]
			Furthermore, if $\langle \Pic^1_C,W \rangle_\textsc{ct} = 1/2$, then $\dim_{\F_2}\Sha(J)[2] \ge 2$ and neither $W$ nor $\Pic^1_C$ is divisible by $2$ in $\HH^1(\Q,J)$.
		\end{theorem}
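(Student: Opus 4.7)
The plan is to identify $\calA := (-1, x^2-17)_2$ with $\gamma'(\ell)$ for an explicit $\ell \in L^\times$, to verify that $\calA \in \Br_\sha C$ by a local-constancy check, and then to compute $\langle \Pic^1_C, W\rangle_\textsc{ct}$ via Theorem~\ref{thm:Manin} as a sum of local invariants. Viewing $\calA$ as an element of $\Br \Q(x)$, its only non-trivial residues occur at $x = \pm\sqrt{17}$; these lift to single ramification points of $\pi$ where $v_P(x-\sqrt{17})=2$, killing the residue, so $\calA \in \Br C$. With the decomposition $L = \Q(i) \times \Q(i\sqrt{17}) \times \Q(\sqrt{17})$ corresponding to the three quadratic factors of $f$, the direct computation
\[
\gamma'((1,1,-1)) = \Cor_{\Q(\sqrt{17})(x)/\Q(x)}\bigl((-1, x-\sqrt{17})_2\bigr) = (-1, \Norm_{\Q(\sqrt{17})/\Q}(x-\sqrt{17}))_2 = (-1, x^2-17)_2
\]
identifies $\calA = \gamma'(\ell)$ for $\ell = (1,1,-1)$. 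Since $\Norm_{L/K}(\ell) = 1$, we have $\bar\ell \in \mathfrakL_1$.

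To show $\calA \in \Br_\sha C$ I will verify local constancy at each place $v$: for any two local points $P_0, P_1 \in C(\Q_v)$, the evaluations $(-1, x_0^2-17)_v$ and $(-1, x_1^2-17)_v$ must coincide in $\Br \Q_v$. Using the curve equation to write $(x_0^2-17)(x_1^2-17) \equiv (x_0^2+1)(x_1^2+1)(x_0^2+17)(x_1^2+17) \pmod{\Q_v^{\times 2}}$ and invoking the Brahmagupta--Fibonacci identity $(x_0^2+1)(x_1^2+1) = (x_0 x_1-1)^2 + (x_0+x_1)^2$, this reduces to showing $(x_0^2+17)(x_1^2+17)$ is a sum of two squares in $\Q_v$. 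A case-by-case analysis, using the local solvability of $C$ and the parity constraint forced by $v_p\bigl(c(x_0^2+1)(x_0^2+17)(x_0^2-17)\bigr) \in 2\Z$, confirms this at each place.

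Then by Theorem~\ref{thm:Manin}, $-\langle \Pic^1_C, W\rangle_\textsc{ct} = \sum_v \inv_v \eval_{P_v}(\calA)$ for any adelic point, and local constancy makes the right-hand side independent of the choice. I then tabulate place-by-place, using the decomposition $\eval_{P_v}(\calA) = (-1,c)_2 + (-1, x_0^2+17)_2$ and global reciprocity for $(-1,c)_2$ to simplify. At $v = \infty$ the constraint $c(x_0^2-17) \ge 0$ gives $\inv_\infty = 0$ if $c > 0$ and $\tfrac{1}{2}$ if $c < 0$; at $p \equiv 1 \pmod 4$ and at $p = 17$ the symbol is trivial because $-1 \in \Q_p^{\times 2}$; at $p \equiv 3 \pmod 4$ with $p \nmid 2c$, good reduction plus Hensel furnishes a point with $v_p(x_0^2-17)=0$, giving zero contribution; at $p \mid c$ odd, the parity of $v_p(y_0^2) = 1 + v_p\bigl((x_0^2+1)(x_0^2+17)(x_0^2-17)\bigr)$ combined with $v_p(x_0^2+1)=0$ forces exactly one of $v_p(x_0^2 \pm 17)$ to be odd, determined by $\left(\frac{17}{p}\right)$, and a direct match against $\left(\frac{17}{p}\right)$ and $\left(\frac{-1}{p}\right)$ isolates the cases contributing $\tfrac{1}{2}$. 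The remaining place $v = 2$ is handled by the explicit Hilbert-symbol formula over $\Q_2$.

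For the final assertion, $W$ may be taken in $\HH^1(\Q, J)[2]$ as the image of $d(\bar\ell) \in \HH^1(\Q, J[2])$ via $J[2] \hookrightarrow J$ (Lemma~\ref{lem:described'}), and $\Pic^1_C \in \HH^1(\Q, J)[2]$ because $\mm = \infty^+ + \infty^-$ is a $\Q$-rational divisor of degree $2$, making $\Pic^2_C$ trivial. If $\langle \Pic^1_C, W\rangle_\textsc{ct} = \tfrac{1}{2}$, the alternating property $\langle X, X\rangle_\textsc{ct} = 0$ forces both classes to be nonzero and unequal, so $\{0, W, \Pic^1_C, W + \Pic^1_C\} \subseteq \Sha(J)[2]$ yields $\dim_{\F_2}\Sha(J)[2] \ge 2$; moreover, any $2$-divisible element pairs trivially with a $2$-torsion class, so neither $W$ nor $\Pic^1_C$ can be $2$-divisible in $\HH^1(\Q, J)$. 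The principal obstacle is the case analysis at $p \mid c$ and $p = 2$, together with the verification of local constancy, both of which require careful bookkeeping of Hilbert symbols, valuations, and the interplay with the Legendre symbols $\left(\frac{17}{p}\right)$ and $\left(\frac{-1}{p}\right)$.
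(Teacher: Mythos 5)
Your identification of $(-1,x^2-17)_2$ with $\gamma'((1,1,-1))$, your place-by-place evaluation of the local invariants, and your argument that $\dim_{\F_2}\Sha(J)[2]\ge 2$ (both classes nonzero and distinct because the pairing is alternating) all agree in substance with the paper. But there are two genuine gaps. First, to invoke Theorem~\ref{thm:Manin} you must actually exhibit $W\in\Sha(J)$ whose image is $h(\calA)$, i.e.\ you must prove $\calA\in\Br_\sha C$. Your criterion --- that $\eval_{P_0}(\calA)=\eval_{P_1}(\calA)$ for all $P_0,P_1\in C(\Q_v)$ --- is necessary but not sufficient. What is needed is $\calA\otimes\Q_v\in\Br_0 C_{\Q_v}$ for every $v$, equivalently (by local duality) that the lift of $h(\calA)$ to $\HH^1(\Q_v,J)[2]$ annihilates all of $J(\Q_v)/2J(\Q_v)$; constancy of point evaluations only shows that it annihilates the subgroup generated by differences of points of $C(\Q_v)$, and for a genus-$2$ curve the image of $C(\Q_v)$ need not generate $J(\Q_v)/2J(\Q_v)$. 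The paper closes this by checking instead that $\res_p(\overline\ell)$ is trivial in $(L\otimes\Q_p)^\times/\Q_p^\times(L\otimes\Q_p)^{\times2}$ for every $p$, which by Lemma~\ref{lem:described'} makes the cocycle $d(\overline\ell)$ locally trivial outright. (Separately, your elementary claim that $(x_0^2+17)(x_1^2+17)$ is a sum of two squares in every $\Q_v$ is asserted rather than proved, but that is a matter of detail, not of logic.)

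Second, your deduction that neither $W$ nor $\Pic^1_C$ is divisible by $2$ in $\HH^1(\Q,J)$ rests on the assertion that ``any $2$-divisible element pairs trivially with a $2$-torsion class.'' This is not a valid principle: if $X=2Y$ with $Y\in\HH^1(\Q,J)$ but $Y\notin\Sha(J)$, the manipulation $\langle 2Y,Z\rangle_\textsc{ct}=2\langle Y,Z\rangle_\textsc{ct}=0$ is meaningless, because the Cassels--Tate pairing is defined only on $\Sha\times\Sha$. Your argument therefore only excludes divisibility by $2$ \emph{within} $\Sha(J)$, which is strictly weaker than the theorem's conclusion. The paper instead appeals to \cite{CreutzShaDiv}*{Theorem 3}: an element of $\Sha(J)$ is divisible by $n$ in $\HH^1(K,J)$ if and only if it pairs trivially with the image of $\Sha^1(K,J[n])$ in $\Sha(J)[n]$. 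To apply this one must additionally verify that $W$ and $\Pic^1_C$ lie in that image, which holds here because $\overline\ell$ is locally trivial and $C$ has a rational ramification point over every completion; this step is missing from your write-up.
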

			
		\begin{proof}
			We first note that $(-1,x^2-17)_2 = \gamma'(\ell)$, for the element 
			\[
				\ell = (1,1,-1) \in \Q(\sqrt{-1})\times \Q(\sqrt{-17})\times\Q(\sqrt{17}) \simeq L.
			\]
			It is easy to see that $C$ is locally solvable. In fact, it has a $\Q_p$-rational ramification point for every prime $p$. One can also check that $\res_p(\overline\ell) \in (L\otimes\Q_p)^\times/\Q_p^\times(L\otimes\Q_p)^{\times 2}$ is trivial for every prime $p$ (this is weaker than requiring $\res_p(\ell) \in \Q_p^\times (L\otimes\Q_p)^{\times 2}$ everywhere locally). This imples that $\gamma'(\ell) \in \Br_\sha C$. Consequently there is a torsor $W \in \Sha(J)$ whose class in $\Sha(J)/\langle\Pic^1_C\rangle$ is represented by $d(\overline\ell)$. By Theorem~\ref{thm:Manin} and Proposition~\ref{prop:application}, for any $(P_p) \in C(\A_\Q)$, we have
			\begin{equation}
				\langle \Pic^1_C, W \rangle_\textsc{ct} 
				= \langle \gamma'(\ell), (P_p) \rangle_{\Br}
				= \sum_p \inv_p \left(\eval_{P_p}(-1,x^2-17)_2\right)\,,
			\end{equation}
			To ease notation, let us set $\varepsilon_p = \inv_p \left(\eval_{P_p}(-1,x^2-17)_2\right)$. Note that, by Theorem~\ref{thm:Manin}, $\varepsilon_p$ depends on $c$, but not on the subsequent choice for $P_p$. 	
			\begin{lemma}\label{lem:epsp} Let $p$ be an odd prime. Then 
				\begin{align*}
					\varepsilon_p &= 				
						\begin{cases}
							1/2 & \textup{if } \left(\frac{-1}{p}\right) =  \left(\frac{-17}{p}\right) = -1\text{ and } p \mid c\,,\\
							0 & \textup{if else}\,.
						\end{cases}\\
					\varepsilon_2 &= 
						\begin{cases}
							0 & \textup{if }c \equiv 1,2\text{ or }5 \mod 8\,,\\
							1/2 & \textup{if }c \equiv 3,6\text{ or } 7\mod 8\,;
						\end{cases}\\
					\varepsilon_\infty &= 
						\begin{cases}
							0 & \textup{if }c > 0\,,\\
							1/2 & \textup{if }c <0\,;
						\end{cases}
				\end{align*}
			\end{lemma}
			The lemma is proved below; using it gives:
			\begin{align*}
				\varepsilon_2 &= \#\left\{ p \mid c: { \left(\frac{-1}{p}\right)} = -1 \right\}/2\\
				&=\#\left\{ p \mid c : \left(\frac{-1}{p}\right) =  \left(\frac{17}{p}\right) = -1\right\}/2+ \#\left\{ p \mid c : \left(\frac{-1}{p}\right) =  \left(\frac{-17}{p}\right) = -1\right\}/2 \\
				&= \#\left\{ p \mid c : \left(\frac{-1}{p}\right) =  \left(\frac{17}{p}\right) = -1\right\}/2+ \sum_{p \mid c} \varepsilon_p\,,
			\end{align*}
			from which the formula in the theorem follows easily.
			
			Now let us prove the final statement of the theorem. Since $C(\A_K) \ne \emptyset$, {the pairing $\langle\cdot,\cdot\rangle_\textsc{ct}$ is alternating \cite{PoonenStollCT}. Tate's proof that the left and right kernels of the pairing are the maximal divisible subgroups \cite[Theorem 3.2]{Tate-duality} shows that $\langle\cdot,\cdot\rangle_\textsc{ct}$ induces a nondegenerate alternating pairing on $\Sha(J)[2]/2\Sha(J)[4]$. As is well known, this implies that the order of this group is a square. If $\langle\Pic^1_C,W\rangle_\textsc{ct} = 1/2$, then the group is nontrivial, and hence has dimension at least $2$. }To show that this also implies that $\Pic^1_C \notin 2\HH^1(K,J)$, we use \cite[Theorem 3]{CreutzShaDiv}, which states that an element of $\Sha(J)$ is divisible by $n$ in $\HH^1(K,J)$ if and only if it pairs trivially with the image of $\Sha^1(K,J[n])$ in $\Sha(J)[n]$. In our situation we know that $W$ lies in this image of $\Sha^1(K,J[2]) \to \Sha(J)$, because $\overline\ell$ is locally trivial.
			\end{proof}		
			
			\begin{proof}[Proof of Lemma~\ref{lem:epsp}]
				Suppose $p$ is odd.  If $\left(\frac{17}{p}\right) = -1$, then $\ell$ is trivial since $-1$ is a square in $\Q(\sqrt{17})\otimes \Q_p$. So suppose $\left(\frac{17}{p}\right) = 1$, let $a \in \Q_p$ be a square root of $17$ and set $P_p = (a,0) \in C(\Q_p)$. Then
				\begin{align*}
					\varepsilon_p
					&= \inv_p\eval_{(a,0)}(-1,x^2-17)_2\\
					&= \inv_p\eval_{(a,0)}\left(-1,c(x^2+17)(x^2+1)\right)_2\\
					&= \inv_p(-1,c\cdot2^2\cdot3^2\cdot17)_2\\
					&= \inv_p(-1,c)_2\,,
				\end{align*}
				which is nontrivial if and only if $p \mid c$ and $\left(\frac{-1}{p}\right) = -1$. To arrive at the statement in the lemma, note that if $\left(\frac{-1}{p}\right) = -1$ and $\left(\frac{17}{p}\right) = 1$, then $\left(\frac{-17}{p}\right) = -1$.

				Clearly $\varepsilon_2$ depends only on the class of $c$ in $\Q_2^\times/\Q_2^{\times2}$. The table below gives, for each square class, a value $x(P_2) \in \Z$ for which $f(x(P_2)) \equiv c \bmod \Q_2^{\times 2}$, i.e., $x(P_2)$ is the $x$-coordinate of a $\Q_2$-point on the curve $y^2 = cf(x)$. The corresponding invariant is then
				\[
					\varepsilon_2 = \inv_2 (-1, x(P_2)^2-17)_2\,,
				\]
				The claim above follows immediately from the table.
				\[
				\begin{tabular}{|c|cccccccc|}
						\hline
						$c \bmod \Q_2^{\times2}$ & $1$ & $2$ & $3$ & $5$ & $6$ & $7$ & $10$ & $14$ \\ \hline
						$x(P_2)$ & $9$ & $5$ & $2$ & $15$ & $13$ & $0$ & $11$ & $3$\\ \hline
						$\varepsilon_2$ & $0$ & $0$ &$1/2$ & $0$ & $1/2$ & $1/2$ & $0$ & $1/2$\\\hline
					\end{tabular}
				\]
							
				For any real point $P_\infty \in C(\R)\setminus \Omega$, $\varepsilon_\infty = \inv_\infty (-1,x(P_\infty)^2-17)_2$,
				which can be nonzero if and only if there are real points with $|x(P_\infty)| < \sqrt{17}$, which occurs if and only if $c < 0$.	
			\end{proof}

%%%%%%%%%%%%%%%%%%%%%%%%%%%%%%%%%%%%%%%%%%%%%%%%%%%%%%%%%%%%%%%%%%%%%%%%%%%%%%%%
%%%%%%%%%%%%%%%%%%				Bibliography			%%%%%%%%%%%%%%%%%%%%%%%%
%%%%%%%%%%%%%%%%%%%%%%%%%%%%%%%%%%%%%%%%%%%%%%%%%%%%%%%%%%%%%%%%%%%%%%%%%%%%%%%%
	\vspace{-.1in}
	\begin{bibdiv}
		\begin{biblist}
			\bib{BruinStoll}{article}{
			   author={Bruin, Nils},
			   author={Stoll, Michael},
			   title={Two-cover descent on hyperelliptic curves},
			   journal={Math. Comp.},
			   volume={78},
			   date={2009},
			   number={268},
			   pages={2347--2370},
			   issn={0025-5718},
			}
			
			\bib{CasselsIV}{article}{
			   author={Cassels, J. W. S.},
			   title={Arithmetic on curves of genus $1$. IV. Proof of the
			   Hauptvermutung},
			   journal={J. Reine Angew. Math.},
			   volume={211},
			   date={1962},
			   pages={95--112},
			   issn={0075-4102},
			}
			
			\bib{CT-0cycles}{article}{
			   author={Colliot-Th{\'e}l{\`e}ne, Jean-Louis},
			   title={Conjectures de type local-global sur l'image des groupes de Chow
			   dans la cohomologie \'etale},
			   language={French},
			   conference={
			      title={Algebraic $K$-theory},
			      address={Seattle, WA},
			      date={1997},
			   },
			   book={
			      series={Proc. Sympos. Pure Math.},
			      volume={67},
			      publisher={Amer. Math. Soc.},
			      place={Providence, RI},
			   },
			   date={1999},
			   pages={1--12},
			   review={\MR{1743234 (2001d:11067)}},
			}

			\bib{CTSan}{article}{
			   author={Colliot-Th{\'e}l{\`e}ne, Jean-Louis},
			   author={Sansuc, Jean-Jacques},
			   title={La $R$-\'equivalence sur les tores},
			   language={French},
			   journal={Ann. Sci. \'Ecole Norm. Sup. (4)},
			   volume={10},
			   date={1977},
			   number={2},
			   pages={175--229},
			   issn={0012-9593},
			}
			
			\bib{CreutzShaDiv}{article}{
			   author={Creutz, Brendan},
			   title={Locally trivial torsors that are not Weil-Ch\^atelet divisible},
			   journal={Bull. Lond. Math. Soc.},
			   volume={45},
			   date={2013},
			   number={5},
			   pages={935--942},
			   issn={0024-6093},
			   doi={10.1112/blms/bdt019},
			}
						
			\bib{CreutzANTSX}{article}{
			   author={Creutz, Brendan},
			   title={Explicit descent in the Picard group of a cyclic cover of the projective line},
			   book={
			     title={Algorithmic number theory: Proceedings of the 10th Biennial International Symposium (ANTS-X) held in San Diego, July 9--13, 2012},
			     series={Open Book Series},
			     volume={1},
			     publisher={Mathematical Science Publishers},
			     editor={Everett W. Howe},	
			     editor={Kiran S. Kedlaya}			    
			   },
			  date={2013},
			  pages={295--315}
			}
			
			\bib{CreutzViray}{article}{
			   author={Creutz, Brendan},
			   author={Viray, Bianca},
			   title={On Brauer groups of double covers of ruled surfaces},
			   note={preprint},
			   date={2014}
			}
			   
			\bib{ES-0cycles}{article}{
			   author={Eriksson, Dennis},
			   author={Scharaschkin, Victor},
			   title={On the Brauer-Manin obstruction for zero-cycles on curves},
			   journal={Acta Arith.},
			   volume={135},
			   date={2008},
			   number={2},
			   pages={99--110},
			   issn={0065-1036},
			}

			\bib{Fujiwara-purity}{article}{
			   author={Fujiwara, Kazuhiro},
			   title={A proof of the absolute purity conjecture (after Gabber)},
			   conference={
			      title={Algebraic geometry 2000, Azumino (Hotaka)},
			   },
			   book={
			      series={Adv. Stud. Pure Math.},
			      volume={36},
			      publisher={Math. Soc. Japan},
			      place={Tokyo},
			   },
			   date={2002},
			   pages={153--183},
			}
			
			\bib{GS-csa}{book}{
			   author={Gille, Philippe},
			   author={Szamuely, Tam{\'a}s},
			   title={Central simple algebras and Galois cohomology},
			   series={Cambridge Studies in Advanced Mathematics},
			   volume={101},
			   publisher={Cambridge University Press},
			   place={Cambridge},
			   date={2006},
			   pages={xii+343},
			   isbn={978-0-521-86103-8},
			   isbn={0-521-86103-9},
			}
			
			\bib{Lind}{article}{
			   author={Lind, Carl-Erik},
			   title={Untersuchungen \"uber die rationalen Punkte der ebenen kubischen
			   Kurven vom Geschlecht Eins},
			   language={German},
			   journal={Thesis, University of Uppsala,},
			   volume={1940},
			   date={1940},
			   pages={97},
			}

			\bib{Manin-BMobs}{article}{
			   author={Manin, Yuri I.},
			   title={Le groupe de Brauer-Grothendieck en g\'eom\'etrie 
						diophantienne},
			   conference={
			      title={Actes du Congr\`es International des 
					Math\'ematiciens},
			      address={Nice},
			      date={1970},
			   },
			   book={
			      publisher={Gauthier-Villars},
			      place={Paris},
			   },
			   date={1971},
			   pages={401--411},
			}
			
			\bib{Merkurjev}{article}{
			   author={Merkurjev, A. S.},
			   title={On the norm residue symbol of degree $2$},
			   language={Russian},
			   journal={Dokl. Akad. Nauk SSSR},
			   volume={261},
			   date={1981},
			   number={3},
			   pages={542--547},
			   issn={0002-3264},
			}

			\bib{PS-descent}{article}{
			   author={Poonen, Bjorn},
			   author={Schaefer, Edward F.},
			   title={Explicit descent for Jacobians of cyclic covers of the projective
			   line},
			   journal={J. Reine Angew. Math.},
			   volume={488},
			   date={1997},
			   pages={141--188},
			   issn={0075-4102},
			}
			
			\bib{PoonenStollCT}{article}{
			   author={Poonen, Bjorn},
			   author={Stoll, Michael},
			   title={The Cassels-Tate pairing on polarized abelian varieties},
			   journal={Ann. of Math. (2)},
			   volume={150},
			   date={1999},
			   number={3},
			   pages={1109--1149},
			   issn={0003-486X},
			}
			
			\bib{RTY}{article}{
			   author={Rehmann, U.},
			   author={Tikhonov, S. V.},
			   author={Yanchevskii, V. I.},
			   title={Two-torsion of the Brauer groups of hyperelliptic curves and
			   unramified algebras over their function fields},
			   note={Special issue dedicated to Alexei Ivanovich Kostrikin},
			   journal={Comm. Algebra},
			   volume={29},
			   date={2001},
			   number={9},
			   pages={3971--3987},
			   issn={0092-7872},
			   doi={10.1081/AGB-100105985},
			}

			\bib{Reichardt}{article}{
			   author={Reichardt, Hans},
			   title={Einige im Kleinen \"uberall l\"osbare, im Grossen unl\"osbare
			   diophantische Gleichungen},
			   language={German},
			   journal={J. Reine Angew. Math.},
			   volume={184},
			   date={1942},
			   pages={12--18},
			   issn={0075-4102},
			}
			
			\bib{Saito-0cycles}{article}{
			   author={Saito, S.},
			   title={Some observations on motivic cohomology of arithmetic schemes},
			   journal={Invent. Math.},
			   volume={98},
			   date={1989},
			   number={2},
			   pages={371--404},
			   issn={0020-9910},
			   review={\MR{1016270 (90k:11077)}},
			}
			
			\bib{Schaefer-descent}{article}{
			   author={Schaefer, Edward F.},
			   title={$2$-descent on the Jacobians of hyperelliptic curves},
			   journal={J. Number Theory},
			   volume={51},
			   date={1995},
			   number={2},
			   pages={219--232},
			   issn={0022-314X},
			}
					
			\bib{Skorobogatov-torsors}{book}{
		    	  author={Skorobogatov, Alexei N.},
		     	  title={Torsors and rational points},
		    	  series={Cambridge Tracts in Mathematics},
		    	  volume={144},
		 	  publisher={Cambridge University Press},
		     	  place={Cambridge},
		      	  date={2001},
		     	  pages={viii+187},
		      	  isbn={0-521-80237-7},
			}
			
			\bib{Stoll-2descent}{article}{
			   author={Stoll, Michael},
			   title={Implementing 2-descent for Jacobians of hyperelliptic curves},
			   journal={Acta Arith.},
			   volume={98},
			   date={2001},
			   number={3},
			   pages={245--277},
			   issn={0065-1036},
			}
					
			\bib{Stoll-descentob}{article}{
			   author={Stoll, Michael},
			   title={Finite descent obstructions and rational points on curves},
			   journal={Algebra Number Theory},
			   volume={1},
			   date={2007},
			   number={4},
			   pages={349--391},
			   issn={1937-0652},
			}
			
			\bib{Tate-duality}{article}{
			   author={Tate, John},
			   title={Duality theorems in Galois cohomology over number fields},
			   conference={
			      title={Proc. Internat. Congr. Mathematicians},
			      address={Stockholm},
			      date={1962},
			   },
			   book={
			      publisher={Inst. Mittag-Leffler},
			      place={Djursholm},
			   },
			   date={1963},
			   pages={288--295},
			}
						
			\bib{Wittenberg-transcendental}{article}{
			   author={Wittenberg, Olivier},
			   title={Transcendental Brauer-Manin obstruction on a pencil of elliptic
			   curves},
			   conference={
			      title={Arithmetic of higher-dimensional algebraic varieties (Palo
			      Alto, CA, 2002)},
			   },
			   book={
			      series={Progr. Math.},
			      volume={226},
			      publisher={Birkh\"auser Boston},
			      place={Boston, MA},
			   },
			   date={2004},
			   pages={259--267},
			}
			
		\end{biblist}
	\end{bibdiv}

\end{document}